\definecolor{maroon}{rgb}{.69,.188,.376}
\definecolor{darkgreen}{rgb}{0,.5,0}
\definecolor{darkblue}{rgb}{0,0,.5}
\definecolor{magenta}{rgb}{1,0,1}
\definecolor{v1}{RGB}{68,1,84}
\definecolor{v2}{RGB}{57,86,140}
\definecolor{craneorange}{RGB}{31,150,139}
\definecolor{v3}{RGB}{31,150,139}
\definecolor{craneblue}{RGB}{255,255,255}
\numberwithin{equation}{section}
\newtheorem{thm}{Theorem}[section]
\newtheorem{lem}{Lemma}[section]
\newtheorem{prop}{Proposition}[section]
\newtheorem{defn}{Definition}[section]
\newtheorem{rem}{Remark}[section]
\newtheorem{ass}{Assumption}[section]
\theoremstyle{definition}
\newcommand{\cs}{\mathscr S}
\numberwithin{equation}{section}
\newcommand{\be}{\begin{equation}}
\newcommand{\ee}{\end{equation}}
\newcommand{\bes}{\begin{equation*}}
\newcommand{\ees}{\end{equation*}}
\newcommand{\mP}{\mathbb{P}}
\newcommand{\mt}{\boldsymbol{\eta}}
\newcommand{\X}{\mathbf{X}}
\newcommand{\RR}{\mathbf{R}}
\newcommand{\M}{\mathbf{M}}
\newcommand{\R}{\mathbb{R}}
\newcommand{\Z}{\mathbb{Z}}
\newcommand{\V}{\text{V}}
\newcommand{\HH}{\text{H}}
\newcommand{\N}{\mathbf{N}}
\newcommand{\E}{\mathbb{E}}
\newcommand{\bean}{\begin{eqnarray*}}
\newcommand{\eean}{\end{eqnarray*}}
\newcommand{\e}{\boldsymbol{\eta}}
\newcommand{\car}{\mathcal{R}}
\newcommand{\sfixed}{\mathscr{S}^1(a;t)}
\newcommand{\mf}{\mathbf}
\newcommand{\uv}{\mathbf u}
\newcommand{\W}{\mathbf W}
\newcommand{\vv}{\mathbf v}
\begin{document}

\title[]{Sausage Volume of the Random String and Survival in a medium of Poisson Traps}
\author{Siva Athreya \and Mathew Joseph \and Carl Mueller}
\address{Siva Athreya\\ International centre for theoretical Sciences\\Survey No. 151, Shivakote,\\
Hesaraghatta Hobli,\\ Bengaluru - 560 089 \\ and \\ Statmath Unit\\ Indian Statistical Institute\\ 8th Mile Mysore Road\\ Bangalore 560059
} \email{athreya@isibang.ac.in or athreya@icts.res.in}

\address{Mathew Joseph\\ Statmath Unit\\ Indian Statistical Institute\\ 8th Mile Mysore Road\\ Bangalore 560059
} \email{m.joseph@isibang.ac.in}

\address{Carl Mueller \\Department of Mathematics, University of Rochester Rochester, NY  14627
}
\email{carl.e.mueller@rochester.edu}

\keywords{heat equation, white noise, stochastic partial differential 
equations, Poisson, hard obstacles, survival probability.}
\subjclass[2010]{Primary, 60H15; Secondary, 60G17, 60G60.}

\begin{abstract}
We provide asymptotic bounds on the survival probability of a
moving polymer in an environment of Poisson traps. Our model for
the polymer is the vector-valued solution of a stochastic heat
equation driven by additive spacetime white noise; solutions
take values in $\R^d, d \geq 1$. We give upper and lower bounds
for the survival probability in the cases of hard and soft
obstacles. Our bounds decay exponentially with rate proportional
to $T^{d/(d+2)}$, the same exponent that occurs in the case of
Brownian motion. The exponents also depend on the length $J$ of
the polymer, but here our upper and lower bounds involve
different powers of $J$.

Secondly, our main theorems imply upper and lower bounds for the
growth of the Wiener sausage around our string. The Wiener
sausage is the union of balls of a given radius centered at
points of our random string, with time less than or equal to a
given value.

\end{abstract}

\maketitle

%\tableofcontents

\section{Introduction}

The model of particles performing random diffusive motion in a region containing randomly located traps is known as the trapping problem (see \cite{hol-weiss} for review). Particle motion is typically Brownian motion in $\R^d$ or a random walk in $\Z^d$.  The traps are placed in a Poissonian manner and the particle gets annihilated on encountering a trap. The main question of interest in such models is the ``Survival Probability'' of the particle. We refer the reader to \cite{szn98} and references there in for a review of the problem of Brownian motion among Poissonian obstacles, to \cite{konig} and references there in for a review of  the problem of a random walk in a random potential and to \cite{ads} for a review of Random walks among mobile and immobile traps.

There is an extensive literature about such trapping problems, see the references in the preceding paragraph. These results often depend on refined estimates for the eigenvalues of the Laplacian or potential theory. On the other hand, the process we consider takes values in function space, and we found it impossible to analyze the situation using existing techniques. Indeed, carrying over finite-dimensional potential theoretic arguments to the infinite dimensional case is often difficult or impossible. We will consider a Gaussian process, but we did not find any Gaussian tools which were relevant to trapping problems.

In this article we will study the annealed survival probability of a random string in a Poissonian trap environment. Let  $(\Omega, {\mathcal F}, {\mathcal F}_t, \mP_0)$ be a filtered  probability space on which $\dot{\mf W}=\dot{\mf W}(t,x)$ is a $d$-dimensional random vector whose components are i.i.d. two-parameter white noises  adapted to ${\mathcal F}_t$. We consider a {\it random string} $\mf u(t,x) \in \mathbb{R}^d$, which is the solution to the following stochastic heat equation (SHE) 
\begin{equation}
\label{eq:she}
\begin{split}
\partial_t {\mf u} (t,x) &=\frac{1}{2}\,\partial_x^2{\mf u}(t,x) + \,\dot {\mf  W}
(t,x)  \\
\mf u(0,x)&=\mf u_0(x)%\equiv{\mf 0} ,  
\end{split}
\end{equation}
on the circle $x\in [0,J]$, having endpoints identified, and $t \in [0,T].$ The initial profile $\mf u_0$ is assumed to be continuous.  Note that we will use boldface letters to denote vector-valued quantities.

We will be interested in the evolution of the random string in a field of obstacles centered at points coming from an independent  Poisson point process. More precisely, let $(\Omega_1,\mathcal{G},\mP_1)$ be a second
probability space on which is defined a Poisson point process {$\mt$} with intensity $\nu$ given by 
 \begin{equation*}
\mt(\omega_1) = \sum_{i\geq 1}\delta_{ \boldsymbol{\xi}_i(\omega_1)},\quad 
       \omega_1\in\Omega_1,
\end{equation*}
with points $\{\boldsymbol{\xi}_i(\omega_1)\}_{i\ge 1}\subset \R^d$. 
% We will assume that the intensity of the Poisson point process is $\nu>0$, so that we have 
% \begin{equation*}
% \begin{split}
% \E_1\left[ \exp\left(-\int f(x) \text{{\Large $\eta$}}(dx) \right)\right] 
% &= \E_1\left[ \exp\left(-\sum f(\boldsymbol{\xi}_i) \right)\right]\\
% &= \exp\left(-\nu \int
% \left[ 1- \exp(-f(z)\right] dz \right)
% \end{split}
% \end{equation*}
% for any Borel measurable $f: \R^d \rightarrow \R$ which is bounded with
% compact support.

% The Poisson point process will be independent of the random string. 

The obstacles will be formed via a potential $\text{V} : \R^d \times\Omega_1
\rightarrow [0,\infty]$
$$ \V(\mathbf{z},\e) = \sum_{i \geq 1} \HH(\mathbf{z}-\boldsymbol{\xi_i}),$$
where $\HH: \R^d \rightarrow [0,\infty]$ is a non-negative, measurable function whose support of $\HH$  is contained in the {\it closed} ball $B(\mathbf 0,a)$ of radius $0<a\le 1$ centered at $\mathbf 0$. 

We will  work in the product space $
(\Omega\times\Omega_1,
\mathcal{F}\times\mathcal{G},\mP_0\times\mP_1)$ along with the filtration
$(\mathcal{F}_t\times \mathcal{G})_{t\ge0}$. 
We will write $\E$ for the expectation with respect to
$\mP:=\mP_0\times\mP_1$,
and $\E_i$ for the expectation with respect to $\mP_i$ for $i=0,1$.  Our main quantity of interest is the quenched and the annealed survival probabilities given by
% The SHE among the obstacles can be described by the quenched path measure
% $$ \Q^{\om_1}_{T,\eta}\Big(d\omega_0\Big) = \frac{1}{S_{T,\eta}(\omega_1)}\exp
% \left
% (-\int_0^t \int_0^J\V\Big(\uv(s,x),\e(\omega_1)\Big)ds dx\right)\mP_0(d\omega_0)$$or the annealed path measure
% $$ \Q_{T}(d\omega_0) = \frac{1}{S_{T}} \E_1\left[\exp \left(-\int_0^t 
% \int_0^J\V\Big(\uv(s,x),\e(\omega_1)\Big)ds dx\right)\right]\mP_0(d\omega_0).$$ 
% Note that our random string $\uv$ is dependent on $\omega_0$, and that we have averaged over the Poisson point process in the definition of the annealed measure. The quenched and annealed normalizing constants are given by
\begin{equation*}
\begin{split}
S_{T, \eta}(\omega_1) &= \E_0\left[\exp\left(-\int_0^T \int_{0}^J\V\Big(\uv
(s,x),\e(\omega_1)\Big)dx
ds\right)\right], \text{ and }\\
S_{T} &=  \E\left[\exp\left(-\int_0^T \int_{0}^J \V\left(\uv(s,x),\e\right)dx ds\right)
\right]
\end{split}
\end{equation*}
respectively. 
%In the case of the hard obstacles, these are called the quenched (resp. annealed) survival probabilities of $\uv$ respectively. 
% They give the probability that no portion of the random string $\uv$ hits any portion of the obstacle up to time $T$. 
Sometimes we will write $S_T^{\HH, J,\nu}$ and $S_{T,\e}^{\HH, J,\nu}$ to emphasize the dependence on $\HH, J,\nu$. 
% The analysis of the normalizing constants (usually called partition functions) is the first step in the study of path properties of the polymer. In this paper we shall be interested in the annealed partition functions. We are now ready to state our main results.
\subsection{Main Result}
Our first result on {\it hard} obstacles, i.e. the string is killed immediately on contact and the only way it can survive is to avoid them.

\begin{thm}[Hard obstacles]
\label{thm1} 
Consider the solution to \eqref{eq:she} with $d\ge 2$ and $J\ge 1$, and let
$\nu$ and $a$ be as above.  Then the following hold in the case $\HH(\cdot) \equiv \infty \cdot \mathbf{1}_{B(\mathbf{0}, a)}$
\begin{enumerate}
\item (Lower bound)
 There exist positive constants $C_0, C_1, C_2$ independent of $T, J$ such that for  $T\ge C_0J^{2+\frac{d}{2}}$ 
\be \label{lb:h}
S^{\textnormal{H}, J, \nu}_{T} \ge  C_1\exp\left(-C_2\left(\frac{T}{J}\right)^{\frac{d}{d+2}}\right). 
\ee

\item (Upper bound)
There exist positive constants $C_3, C_4$ independent of $T, J$ such that for all $T >0, J \geq 1$
\be
\label{ub:h}
S^{\textnormal{H}, J, \nu}_{T} \le  C_3\exp\left(-\frac{C_4}{1+|\log J|}\left(\frac{T}{J^2}\right)^{\frac{d}{d+2}}\right). 
\ee
\end{enumerate}
\end{thm}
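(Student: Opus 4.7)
The plan starts by integrating out the Poisson environment, which gives the clean identity
\[
S_T^{\mathrm{H}, J, \nu} = \E_0\!\left[\exp\!\left(-\nu \bigl|\mathcal{W}^a_{T,J}\bigr|\right)\right], \qquad \mathcal{W}^a_{T,J} := \bigcup_{(s,x)\in[0,T]\times[0,J]} B\!\bigl(\mathbf{u}(s,x),a\bigr),
\]
so both inequalities reduce to Laplace-transform estimates for the Wiener sausage of the random string.

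For the lower bound I would use a confinement strategy. Write $\mathbf{u}(t,x) = \bar{\mathbf{u}}(t) + \tilde{\mathbf{u}}(t,x)$ with $\bar{\mathbf{u}}(t) := J^{-1}\int_0^J \mathbf{u}(t,x)\,dx$ the center of mass and $\tilde{\mathbf{u}}$ the zero-mean fluctuation. Since periodic boundary conditions kill the Laplacian term, $\bar{\mathbf{u}}$ is a Brownian motion in $\R^d$ with diffusion rate $1/J$; orthogonality of Fourier modes of white noise makes $\bar{\mathbf{u}}$ and $\tilde{\mathbf{u}}$ independent, and $\tilde{\mathbf{u}}$ is a space-time stationary Gaussian field with pointwise variance of order $J$. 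Fix $R$ and set $A_R := \{\sup_{s,x}|\mathbf{u}(s,x)|\le R\}$. On $A_R$ the sausage lies inside $B(\mathbf{0},R+a)$, so
\[
S_T^{\mathrm{H}, J, \nu} \ge e^{-c_d \nu (R+a)^d}\, \mP_0(A_R),\qquad \mP_0(A_R)\ge \mP_0\!\bigl(\textstyle\sup_t|\bar{\mathbf{u}}|\le R/2\bigr)\cdot \mP_0\!\bigl(\textstyle\sup_{s,x}|\tilde{\mathbf{u}}|\le R/2\bigr).
\]
The first factor is $\ge \exp(-cT/(JR^2))$ via the principal Dirichlet eigenvalue of the Laplacian in $B(\mathbf{0},R/2)$ (scaled by the diffusion rate $1/J$), and Borell--TIS together with an entropy estimate give the second factor a universal positive lower bound once $R\gtrsim \sqrt{J}$. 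Optimising $\nu R^d$ against $T/(JR^2)$ produces $R\sim (T/J)^{1/(d+2)}$, and the constraint $R\gtrsim \sqrt{J}$ converts into $T\gtrsim J^{2+d/2}$.

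For the upper bound I would use the elementary splitting
\[
S_T^{\mathrm{H}, J, \nu} \le \mP_0\!\bigl(|\mathcal{W}^a_{T,J}|<V\bigr) + e^{-\nu V}
\]
for $V$ of order $(T/J^2)^{d/(d+2)}/(1+|\log J|)$, and push the small-volume probability down by controlling the sausage of a single trajectory: for any fixed $x_0$, $\mathcal{W}^a_{T,J}\supset \bigcup_{s\in[0,T]}B(\mathbf{u}(s,x_0),a)$. The process $s\mapsto \mathbf{u}(s,x_0)$ is a non-Markovian $\R^d$-valued Gaussian process whose long-time component is the center-of-mass Brownian motion (diffusion $1/J$) and whose short-time component is $1/4$-self-similar (variance $\sqrt{s}$ for $s\ll J^2$). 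A Donsker--Varadhan-type anti-concentration bound applied to the Brownian component, together with a range-to-sausage estimate for the short-time fluctuations, should yield the announced rate; the $(1+|\log J|)^{-1}$ factor looks like the $d=2$-style logarithmic correction that appears when passing from the range of a curve to the volume of its tube near the crossover scale $s\sim J^2$.

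The main obstacle is the upper bound. The string is an infinite-dimensional diffusion on $C([0,J];\R^d)$, so the finite-dimensional potential-theoretic machinery that drives the classical Donsker--Varadhan argument (principal Dirichlet eigenvalues of the Laplacian in random potentials, Kac's formula, enlargement of obstacles) is not directly available, and Gaussian concentration alone gives the wrong direction of inequality for Laplace transforms. The gap between $(T/J)^{d/(d+2)}$ in the lower bound and $(T/J^2)^{d/(d+2)}/(1+|\log J|)$ in the upper bound reflects precisely our inability to simultaneously exploit, when bounding the sausage volume from below, the $\sqrt{J}$ spatial width of the string and the $\sqrt{T/J}$ motion of its center of mass; closing the gap would seem to require a genuinely Hilbert-space spectral analysis of the Ornstein--Uhlenbeck semigroup in the random potential.
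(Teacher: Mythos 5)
Your lower bound is essentially the paper's argument: integrate out the Poisson field, decompose $\mathbf u$ into center of mass plus fluctuation (noting their independence), confine both components to a ball of radius $R$, and optimise $R$ against the empty-ball cost; the constraint $T\gtrsim J^{2+d/2}$ arises exactly as you anticipate. That part is correct and matches the paper.

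Your upper-bound sketch has a genuine gap, and you have in fact already put your finger on it (``the main obstacle is the upper bound''). The issue is not merely technical: lower-bounding the sausage volume via a single fibre $s\mapsto\mathbf u(s,x_0)$ throws away the spatial extent of the string and, moreover, that fibre is a non-Markovian Gaussian process to which the Donsker--Varadhan small-sausage machinery does not apply --- nor can you transfer the bound to the center-of-mass Brownian motion $\X_s$, since containing the sausage around $\X$ in the sausage around $\mathbf u(\cdot,x_0)$ would require the fluctuation $\mathbf u(s,x_0)-\X_s$ (which has $O(1)$ variance after scaling) to stay within $a/2$ for all $s\le T$, an exponentially rare event. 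The paper avoids this trap by using the \emph{whole string} at each of a carefully constructed sequence of stopping times $T_i$. It first introduces exit-type stopping times $\tau_i$ at which $\X_{\tau_i}$ are mutually $4\Lambda$-separated (and shows there are $\gtrsim T^{d/(d+2)}$ of them via the classical Wiener-sausage volume bound for $\X$); then it thins to times $T_i$ after which the smoothed history $G_{t-T_{i-1}}*\N(T_{i-2},T_{i-1})$ has decayed to range $\le\delta$ --- this costs a factor $L\sim |\log a|$ in density, which is the origin of your $(1+|\log J|)^{-1}$ after the space-time rescaling $\tilde a=a/\sqrt J$. The decisive ingredient, absent from your proposal, is a \emph{deterministic} lower bound $a^{d-2+\gamma}$ on the volume of the fixed-time sausage $\mathscr S(a/2;\N(T_{i-1},T_i))$, obtained by proving the range of the stationary noise $\N^{(1)}(t;\cdot,0)$ has lower Minkowski dimension $\ge 2$ a.s.\ (via the energy method applied to the occupation measure, exploiting that the string is $1/2$-H\"older in space). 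Disjointness of the fixed-time sausages at the $T_i$'s follows from the $4\Lambda$ separation of $\X_{T_i}$ together with the small range, and summing gives the rate. Without this two-dimensional-range input, a single-trajectory argument would not recover the stated exponent, and without the $T_i$-construction and the second-moment control of $S_{i+1}-T_i$ you cannot certify disjointness or positive density; this is where a complete proof along your lines would stall.
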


In the case of hard obstacles we immediately see that the survival of the string is only possible if the string avoids the traps. Thus the``sausage of radius $a$ around string up to time $T$'' should be devoid of traps. Indeed it  is easy to  check using standard properties of the Poisson random variable that  
\be \label{eq:pf:hard} S_T^{\HH,J,\nu} = \E\exp\left( -\nu \left|\mathscr{S}^J_{T}(a)\right|\right),\ee
where 
\be \label{eq:u:s} \mathscr{S}^J_{T}(a) =  \mathop{\bigcup}_{\substack{0 \leq s\leq T,\\ 0 \leq y \leq J}} \left\{ \uv(s,y) + B(\mathbf{0},a) \right\},\ee
is the sausage of radius $a$ around $\uv$. Thus Theorem \ref{thm1} also provides bounds on the exponential moments of the volume of the sausage of radius $a$ around the string up to time $T$.
%It will also be useful in the proof of Theorem \ref{th:sprob} to consider the sausage around $\uv(t)=\uv(t,\cdot)$, that is 
%\be \label{eq:u:sf}
%\sfixed =  \mathop{\bigcup}_{ 0 \leq y \leq J}  \left\{ \uv(t,y) + B(\mathbf{0},a) \right\},
%\ee
%so that 
%\[ \sjoint =\bigcup_{0\le t\le T}\sfixed.  \]

We next turn our attention to the case of {\it soft} obstacles, i.e. $\HH$ does not take the value $\infty$. We make the following specific assumptions on $\HH$. 
\begin{ass} \label{ass} There is a $\mathscr C>0$ such that $\HH(\mathbf x)\ge \mathscr C\cdot \mathbf{1}_{B(\mathbf{0},\frac{a}{2})}(\mathbf{x})$.
\end{ass}
We note that under this assumption there is a positive probability of survival even if the string interacts with the obstacle environment. We are now ready to state our result in this setting.

\begin{thm}[Soft obstacles]
\label{thm2} 
Consider the solution to \eqref{eq:she} with $d\ge 2$ and $J\ge 1$, let
$\nu>0$ be as above and let $\HH$ be a soft obstacle satisfying Assumption \ref{ass}. Then 
\begin{enumerate}
\item (Lower bound)
 There exist positive constants $C_0, C_1, C_2$ independent of $T, J$ such that for  $T\ge C_0J^{2+\frac{d}{2}}$ 
\be \label{lb:s}
S^{\textnormal{H}, J, \nu}_{T} \ge  C_1\exp\left(-C_2\left(\frac{T}{J}\right)^{\frac{d}{d+2}}\right). 
\ee
\item (Upper bound)
Fix $\beta>0$. There exist positive constants $C_3, C_4$ independent of $T, J$ such that for all $T >0, J \geq 1$
\be \label{ub:s}
S^{\textnormal{H}, J, \nu}_{T} \le  C_4\exp\left(-\frac{C_5}{J^{3+\beta}(1+|\log J|)}\left(\frac{T}{J^2}\right)^{\frac{d}{d+2}}\right). 
\ee
\end{enumerate}
\end{thm}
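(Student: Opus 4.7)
The lower bound \eqref{lb:s} is a direct consequence of \eqref{lb:h}. Since $\mathrm{supp}(\HH) \subset B(\mathbf{0}, a)$ and $\HH \geq 0$, we have $\HH(\mathbf{z}) \leq (\infty\cdot\mathbf{1}_{B(\mathbf{0},a)})(\mathbf{z})$ pointwise (with the convention $0\cdot\infty = 0$), so the soft potential $\V$ is dominated pointwise by its hard counterpart. Hence $S_T^{\HH,J,\nu} \geq S_T^{\textnormal{H},J,\nu}$, and Theorem~\ref{thm1}(1) yields \eqref{lb:s}.

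\textbf{Upper bound: reduction to hard obstacles.} For the upper bound the plan is to reduce to Theorem~\ref{thm1}(2) at the price of a polynomial loss in $J$. First, using Assumption~\ref{ass} and the Laplace functional of the Poisson process $\boldsymbol{\eta}$ after conditioning on $\uv$,
\begin{equation*}
S_T \;\leq\; \E_0\!\left[\exp\!\left(-\nu\int_{\R^d}\bigl(1-e^{-\mathscr{C}L(\mathbf{y})}\bigr)\,d\mathbf{y}\right)\right],
\end{equation*}
where $L(\mathbf{y}) := \bigl|\{(s,x)\in[0,T]\times[0,J] : \uv(s,x) \in B(\mathbf{y},a/2)\}\bigr|$ is the $(s,x)$-occupation measure of $\uv$ near $\mathbf{y}$. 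The task is then to compare $\int_{\R^d}(1-e^{-\mathscr{C}L(\mathbf{y})})\,d\mathbf{y}$ with the hard-obstacle sausage volume $|\mathscr{S}^J_T(a/4)|$.

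\textbf{Key comparison.} The heart of the proof is to show that on an event of high $\mP_0$-probability,
\begin{equation*}
\int_{\R^d}\bigl(1-e^{-\mathscr{C}L(\mathbf{y})}\bigr)\,d\mathbf{y} \;\geq\; \frac{c}{J^{3+\beta}(1+|\log J|)}\cdot\bigl|\mathscr{S}^J_T(a/4)\bigr|.
\end{equation*}
The plan is to exploit the spatio-temporal H\"older regularity of $\uv$: by Kolmogorov chaining for \eqref{eq:she} on $[0,T]\times[0,J]$, there is a random constant $M$ with tractable tails such that $|\uv(s,x)-\uv(s',x')| \leq M\bigl(|s-s'|^{1/4}+|x-x'|^{1/2}\bigr)$ uniformly on the domain. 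Whenever $\uv(s_0,x_0) \in B(\mathbf{y},a/4)$, this modulus forces $\uv(s,x) \in B(\mathbf{y},a/2)$ on a space-time neighborhood of $(s_0,x_0)$ of area at least $c/M^{6}$, so $L(\mathbf{y}) \geq c/M^{6}$ for every $\mathbf{y} \in \mathscr{S}^J_T(a/4)$. Truncating at a suitable threshold for $M$ and absorbing its tail event yields the displayed comparison, with the factor $J^{3+\beta}(1+|\log J|)$ tracking the $J$-dependence of the chaining estimate and the arbitrarily small slack $\beta$.

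\textbf{Conclusion.} Combining the reduction with the comparison,
\begin{equation*}
S_T \;\leq\; \E_0\!\left[\exp\!\left(-\tilde\nu\,|\mathscr{S}^J_T(a/4)|\right)\right] + (\text{tail correction}), \qquad \tilde\nu := \frac{c\,\nu}{J^{3+\beta}(1+|\log J|)}.
\end{equation*}
Applying Theorem~\ref{thm1}(2) to the main term with effective intensity $\tilde\nu$ and radius $a/4$ (tracking how the constants $C_3,C_4$ there depend polynomially on the intensity) then yields \eqref{ub:s}. The principal obstacle is the key comparison: producing the correct polynomial-in-$J$ loss requires a careful modulus-of-continuity argument for $\uv$ that is uniform over the large domain $[0,T]\times[0,J]$, together with a delicate truncation of the random H\"older constant.
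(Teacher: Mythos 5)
Your lower-bound argument is correct and clean: pointwise domination $\HH\le\infty\cdot\mathbf{1}_{B(\mathbf 0,a)}$ gives $S_T^{\HH,J,\nu}\ge S_T^{\text{hard},J,\nu}$ immediately. The paper gets the same result by observing that its survival strategy (the string stays in a trap-free ball) avoids the support of $\V$ altogether and so is insensitive to whether the obstacle is hard or soft; the two observations are essentially equivalent, and yours is slightly more economical.

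Your upper-bound plan, however, has a genuine gap. The Laplace-functional reduction
$S_T\le\E_0[\exp(-\nu\int(1-e^{-\mathscr CL(\mathbf y)})d\mathbf y)]$ is fine, and the local geometric picture (a visit of $\uv$ to $B(\mathbf y,a/4)$ plus H\"older control forces $L(\mathbf y)\gtrsim 1/M^6$) is the right instinct. The problem is that the random constant $M$ in a uniform modulus of continuity over the full domain $[0,T]\times[0,J]$ is \emph{not} tight as $T\to\infty$: even the \emph{local} (unit time-scale) H\"older constant of $\uv$ grows at least like $\sqrt{\log T}$, because it is a supremum of $\sim T$ roughly independent sub-Gaussian maxima; and a genuinely global modulus in the form you wrote must accommodate $|\uv(T,x)-\uv(0,x)|\sim\sqrt T$ against $MT^{1/4}$, forcing $M\gtrsim T^{1/4}$. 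Either way, the occupation lower bound $L(\mathbf y)\gtrsim 1/M^6$ degrades with $T$, and the truncation you propose cannot repair it: to make $\mP_0(M>m)$ decay like $\exp(-CT^{d/(d+2)})$ you would need $m\gtrsim T^{d/(2(d+2))}$, whence $1/m^6$ annihilates the $T^{d/(d+2)}$ in the exponent. You thus lose a power or at best a polylog of $T$, which the stated bound \eqref{ub:s} does not allow.

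The paper sidesteps this by \emph{localizing in time at a discrete set of stopping times}. It reuses the $T_i$ from the hard-obstacle proof and redefines $\#_3(T)$ (see \eqref{eq:n3:t:soft}) to count those $T_i$ at which, with probability bounded below by an $a$-dependent, $T$-\emph{independent} constant (Proposition \ref{prop:soft}), the entire string stays within distance $3a/8$ of its center of mass for a time window of length $C_6a^{4+\eta}$, the center of mass barely moves, and a Poisson point lies within $a/8$. Assumption \ref{ass} then gives a deterministic accumulation of potential $\mathscr C\cdot C_6a^{4+\eta}$ over each such window, and a Bernoulli large-deviation bound over the $O(T^{d/(d+2)}/L)$ many candidate $T_i$ replaces any global-in-$T$ modulus estimate. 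The powers of $J$ (and the slack $\beta$) come out of the $a\mapsto aJ^{-1/2}$, $\nu\mapsto\nu J^{d/2}$ scaling of this $a^{d+6+\eta}$ factor, not from a H\"older constant. You would need a comparable localization in time before your occupation-density comparison could be made uniform in $T$.
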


We conclude this sub-section with a few remarks.
\begin{rem} Though the bounds do not match with regard to the exponents of $J$, they do match in that of $T$.
\begin{enumerate}
  \item[(i)] If we set $J=1$ in (\ref{lb:h})and (\ref{ub:h}) or in (\ref{lb:s}) and (\ref{ub:s}) then for large enough $T>0$, in both the hard and soft obstacle cases we have 
$$C_1\exp\left(-C_2 T^{\frac{d}{d+2}}\right) \le S^{\textnormal{H}, 1, \nu}_{T} \le  C_3\exp\left(-C_4 T^{\frac{d}{d+2}}\right),
$$
for some constants $C_1,C_2,C_3,C_4 >0.$
  \item[(ii)] Due to \eqref{eq:pf:hard}, Theorem \ref{thm1} immediately gives us bounds on $\E\exp\left( -\nu \left|\mathscr{S}^J_{T}(a)\right|\right)$. Further, as observe above at $J=1$, from Theorem \ref{thm1} that  we have for sufficiently large $T >0$
$$ C_1\exp\left(-C_2 T^{\frac{d}{d+2}}\right) \leq \E\exp\left( -\nu \left|\mathscr{S}^1_{T}(a)\right|\right)\leq C_3\exp\left(-C_4 T^{\frac{d}{d+2}}\
\right), $$
for some constants $C_1,C_2,C_3,C_4 >0.$ The exponent of $T$ matches that of the asymptotics of the volume of the Brownian Sausage. Since in our case ${\mf u}$ is the solution of a stochastic PDE, this seems to be a new result of independent interest.

\item[(iii)] The constants mentioned in Theorem \ref{thm1} and Theorem \ref{thm2} are all independent of $T,J$ but do depend on $\nu, a,$ and $\mathscr C$. The bound $0\leq a \leq 1$ is used for technical convenience.

\end{enumerate}

\end{rem}

\subsection{Overview of Proof}

We will say that ${\mf u}$ is  a solution  to \eqref{eq:she} if it satisfies,
\begin{equation}
\label{eq:weak-form}
\mf u(t,x)=\int_{0}^{J}G^{(J)}(t,x-y)\mf u_0(y)
dy
  + \int_{[0,t]\times[0,J]}G^{(J)}(t-s,x-y)\mathbf{W}(dsdy),
\end{equation}
where $ [0,J]$ is the circle with endpoints identified and $G^{(J)}:\R_+\times[0,J]\rightarrow\R$ is the fundamental solution of 
the heat equation 
\begin{equation*}
\begin{split}
\partial_t G^{(J)}(t,x) 
&=\frac{1}{2}\,\partial_x^2 G^{(J)}(t,x), \\
G^{(J)}(0,x)&=\delta(x).
\end{split}
\end{equation*}
Furthermore, the final integral in \eqref{eq:weak-form} can be regarded as
either a Wiener integral or a white
noise integral in the sense of Walsh \cite{wals}.

The first reduction in the proof is to reduce to the case $J=1.$ We will do this by deriving a scaling relation for $S_{T}^{\HH, J,\nu}$. Consider 
\[\vv(t,x):= J^{-\frac12} \uv(J^2 t, Jx),\]
defined for $x\in [0,1]$ with endpoints identified, and $t\in [0, TJ^{-2}]$. The initial profile is $\vv(0,x)= \vv_0(x) = J^{-1/2} \uv_0(J x)$. It was proved in Lemma 2.2 of \cite{athr-jose-muel} that $\vv$ satisfies 
\begin{equation*}
\begin{split}
\partial_t \vv &=\frac12 \partial_x^2 \vv +\dot{\widetilde \W}, \quad t \in [0, TJ^{-2}]\\
\vv(0,x) &= \vv_0(x), \quad x \in [0,1]
\end{split}
\end{equation*}
for some other white noise $\dot{\widetilde \W}$. Now it is easily checked
\begin{align*}
S_{T}^{\HH, J,\nu} & = \E\left[\exp\left(-\int_0^T \int_{0}^J \sum_{i\ge 1} \HH\left(\uv(s,x)-\boldsymbol{\xi}_i\right) dx ds\right)
\right] \\
& = \E\left[\exp\left(-\int_0^{\frac{T}{J^2}} \int_{0}^1 \sum_{i\ge 1} J^3\HH\left(J^{\frac12} \left(\vv(\tilde s,\tilde x)-\frac{\boldsymbol{\xi}_i}{J^{\frac{1}{2}}}\right)\right) d\tilde x d\tilde s\right)
\right]
\end{align*}
Define 
\be \label{eq:scalingrelation}
\widetilde \HH(\cdot) := J^3 \HH(J^{\frac12}\cdot), \qquad \tilde{\boldsymbol{\xi}_i} := \frac{\boldsymbol{\xi}_i}{J^{\frac12}}, \qquad \mbox{ and } \qquad \widetilde\nu := \nu J^{\frac{d}{2}}.\ee
It is easily seen that $\widetilde \HH$ is supported in the ball $B(\mathbf{0}, aJ^{-\frac12})$. The points $\tilde{\boldsymbol{\xi}}$  form a Poisson point process of intensity $\widetilde{\nu}$. 
%We have the scaling relation 
% \begin{align*} S_{T}^{\HH, J,\nu} &= \E\left[\exp\left(-\int_0^{\frac{T}{J^2}} \int_{0}^1 \sum_{i\ge 1} \widetilde\HH\left(\vv(\tilde s,\tilde x)-\tilde{\boldsymbol{\xi}}_i\right) d\tilde x d\tilde s\right)
% \right] \\
% &= \E\left[\exp\left(-\int_0^{\frac{T}{J^2}} \int_{0}^1 \widetilde \V\left(\vv(\tilde s,\tilde x),\tilde{\e}\right) d\tilde x d\tilde s\right)
% \right], 
% \end{align*}
% where $\widetilde \e$ is the Poisson point process formed by the $\tilde {\boldsymbol{\xi}}$'s and 
% \[ \widetilde \V(x,\tilde \e) = \sum_i \widetilde{\HH}(x-\tilde{\boldsymbol{\xi}}_i). \] 
Setting $\widetilde T= TJ^{-2}$ we obtain 
\be \label{eq:scaling}
S_{T}^{\HH, J,\nu} =  S_{\widetilde{T}}^{\widetilde{\HH}, 1,\widetilde\nu}.\ee

\begin{rem}[{\bf Important}] \label{rem:scaling} 
For the rest of article will focus on $J=1$, so \eqref{eq:weak-form} becomes 
\[ \uv(t,x) = \int_0^1 G(t,x-y) \uv_0(y) \, dy +\N(t,x),\]
where
\[ \N(t,x) = \int_{[0,t]\times [0,1]} G(t-s, x-y) \W(ds dy)\]
is the noise term. For simplicity of notation, we have also removed the superscipt in $G^{(1)}$. We will work with $S_T^{\textnormal{H}, 1, \nu}$ and finally use the scaling relation \eqref{eq:scaling} to obtain the bounds for  $S_T^{\textnormal{H}, J, \nu}$
\end{rem}

The strategy for proving the lower bound for survival probability in Theorem \ref{thm1} Theorem \ref{thm2} is the same as that of Random walks or Brownian motions moving in a field of Poisson Traps. This is obtained by identifying an optimal configuration for the traps ${\mathbf \xi}$. The configuration being one which has an area free of traps in a ball of radius $\alpha_T$ around the origin and the string under this potential is forced to stay inside this ball till time $T$. The probability of first event is of order $\exp(-C_1 (\alpha_T+a)^d)$.  To calculate the second probability we decompose the string in to two components namely {\it center of mass} and {\it radius} of $\uv$ respectively. More precisely
\begin{equation}\label{eq:xr}\begin{split}
\mathbf X_t &= \int_0^1\uv(t,x)\, dx, \mbox{ (Center of Mass) }\\
\mathbf R_t &= \sup_{x\in [0,1]} \left|\uv(t,x)-\X_t\right|, \mbox{ (Radius). }
\end{split}
\end{equation}
We show that $\mathbf X_t$ and $\mathbf R_t$ are independent. 
Then separately we compute the probabilities of $|\mathbf X_t|\le\frac{\alpha_T}{2},\; t\le T$ and $\mathbf R_t\le\frac{\alpha_T}{2},\; t\le T$, and show that these are bounded by $\exp(-C_2(\frac{T}{\alpha_T^2})).$ Optimising over choice of $\alpha_T$ and the scaling relations discussed above yield the results. We present the details in Section \ref{sec:lb}.

Unlike the lower bound, the  proof of upper bound differs from the classical setting of random walks or that of Brownian motion. 
Following Remark \ref{rem:scaling} we first obtain an upper bound for $S_T^{\HH, 1, \nu}$. Recall 
\be \label{eq:ub:1} S_T^{\HH,1,\nu} = \E\exp\left( -\nu \left|\mathscr{S}^1_{T}(a)\right|\right),\ee
where $\mathscr{S}^1_{T}(a)$ is the sausage of radius $a$ around $\uv$, that is 
\bes \mathscr{S}^1_{T}(a) =  \mathop{\bigcup}_{\substack{0 \leq s\leq T,\\ 0 \leq y \leq 1}} \left\{ \uv(s,y) + B(\mathbf{0},a) \right\}.\ees
 We will explain the strategy for the proof in the case of hard obstacles, the argument for the soft obstacles not being very different. Due to \eqref{eq:ub:1}, an upper bound on the partition function $S_T^{\HH, 1, \nu}$ essentially boils down to obtaining a lower bound on the volume of the sausage $\mathscr{S}_T^1(a)$ around $\uv$. % It is useful to consider the sausage of radius $4a$ around the center of mass $\X_T$:
%\be \label{eq:x:s} \mathcal{X}_T(a) =\bigcup_{0\le t\le T}\left\{\X_s + \bar{B}(\mathbf{0},a)\right\}. \ee
%This is the well studied Wiener sausage.
  For this, we consider the sausage around $\uv(t)=\uv(t,\cdot)$, that is 
\be \label{eq:u:sf}
\sfixed =  \mathop{\bigcup}_{ 0 \leq y \leq 1}  \left\{ \uv(t,y) + B(\mathbf{0},a) \right\},
\ee
so that 
\[ \mathscr{S}^1_{T}(a)=\bigcup_{0\le t\le T}\sfixed.  \]
We will identify times at which $\sfixed$ do not intersect, so the sum of the volumes of these fixed time sausages will provide the desired lower bound.

 We will consider a set of stopping times $\tau_i$ (see \eqref{eq:tau}) such that the center of mass at these time points, ${\mathbf X}_{\tau_i}$ are separated by at least $4\Lambda$ from each other (where $\Lambda$ is suitably chosen see Lemma \ref{lem:r:unif}). 
%Lemma \ref{lem:ws:tail} guarantees that the volume $\left|\mathcal{X}_T(4a)\right|$ is at least $T^{\frac{d}{d+2}}$ with a very large probability.
%\begin{lem}\label{lem:ws:tail}[\cite{berg-bolt-holl}, \cite{bolt}, \cite{dons-vara}, \cite{szni}] Let $d\ge 2$. There exists $T_0(d,a)>0$ such that for $T\ge T_0$ we have 
%\be \label{eq:ld:vol}
 %\mP_0\left(\left|\mathcal X_T(4a)\right|\le  T^{\frac{d}{d+2}}\right) \le  \exp\left(-\frac{\lambda_d}{4} T^{\frac{d}{d+2}}\right),
%\ee
%where $\lambda_d>0$ be the smallest Dirichlet eigenvalue of $-\Delta$ on the unit ball $\bar{B}(\mathbf{0},1)$.
%\end{lem}
%We now consider time points $\tau_i$, sufficiently far apart,  such that the union of the $a$-balls $B(\X_{\tau_i},8a)$ contains $\mathcal X_T(4a)$.
%Let $\tau_0=0$ and consider consecutive stopping times $\tau_i$ defined as 
%\be \label{eq:tau} \tau_{i+1}=\inf\left\{t>\tau_i: \text{dist}\left(\X_t, \bigcup_{k=0}^i \X_{\tau_k}\right)\ge  4a\right\}.\ee
%Note that $\X_{\tau_{i}}$ is on the boundary of the region $\bigcup_{k=0}^{i-1} B(\X_{\tau_k}, 4a)$. Let 
%\be \label{eq:nt}
%\#(T):= \left|\left\{i\ge 1: \tau_i \le T\right\}\right| 
%\ee
%be the number of $\tau_i$'s with $i\ge 1$ before time $T$. 
Using known results on the volume of the Wiener sausage, we show that the number of $\tau_i$ before time $T$ should be of order $T^{\frac{d}{d+2}}$ (see Lemma \ref{lem:nt:tail}). Now, let 
\[ \N (s,t; x):= \int_{[s,t]\times[0,1]} G(t-r,x-y) \W (dr dy),\]
which represents the noise term from time $s$ to $t$. Then $\N (s,t)$ will represent the function from $x\in[0,1]$ to $\R^d$. For $s<t$, we use the Markov property for $\uv $ to write
\[ \uv(t) = G_{t-s}*\uv(s) + \N(s,t).\]
If $s\ll t$ then the first term is almost a constant because of the smoothening effect of the Laplacian. The volume of the sausage around $G_{t-s}*\uv(s)$ will then be approximately $a^d$.
We show in Lemma \ref{lem:ti}, using the independence of $\X_t$ and $\RR_t$, that with probability $\ge \frac12$ the range (see \eqref{eq:range} for precise definition) of $\N(s,t)$ is at most $\Lambda$ and the volume of the sausage of radius $a$ around $\N(s,t)$ is of order at least $a^{d-2+\epsilon}$. Consequently there is a subset $\{T_i\}$ of $\{\tau_i\}$ of positive density, such that the sausages around $\uv(T_i)$ are disjoint and have volume $a^{d-2+\epsilon}$ each. This gives a lower bound of $a^{d-2+\epsilon} T^{\frac{d}{d+2}}$ on the volume $\mathscr{S}_T^1(a)$.

As mentioned earlier, our strategy for the lower bound of the annealed survival
probability is the same as for a random walk or Brownian motion surviving in a
field of random traps. The upper bound is different, since the usual strategies
depend on potential theory and eigenvalues of the Laplacian, and both of these
are much harder to study for infinite dimensional processes such as the random
string. We are forced to go back to first principles, which perhaps explains the
fact that our upper and lower bounds do not completely match.  It is also important to note  that while the upper and lower bounds match for the case $J=1$, the scaling relations in \eqref{eq:scalingrelation} imply that they don't carry over to the general case via space-time scaling. 

The quenched survival probability is of keen interest and will be the focus of future work. Here the geometry of the string and its topology will come into play.  We did not explore large deviations for the volume of the sausage as there was no immediate ergodicity to establish a limiting value of a Lyapunov exponent.

\textbf{Convention} We will use $C$ to denote constants whose value might change from line to line. Sometimes we will indicate dependence of constants on parameters by putting the parameters in parentheses, for example $C(d), C(\nu, d)$ etc. The notation $C_1, C_2,\cdots$ will be used to denote constants whose value remain fixed throughout a lemma, proposition, theorem etc. Such constants might be used later in which case it will be clear from the context.

\textbf{Acknowledgement:} S.A. research was partially supported by  the CPDA grant from the Indian Statistical Institute and the Knowledge Exchange grant from the International Centre for Theoretical Sciences, C.M. research was partially supported by Simons Collaboration Grant 513424, and  M.J. research was partially supported by Serb Matrics grant MTR/2020/000453 and a CPDA grant from Indian Statistical Institute.

\section{Proof of the lower bound in Theorems \ref{thm1} and \ref{thm2}} \label{sec:lb}

As indicated above we will use the same strategy for the lower bound for the survival probability for both the {\em hard} and {\em soft} obstacle case.  We need a couple of technical results before we begin the proof.

The following lemma is crucial.

\begin{lem} \label{lem:cmrind} 
With $\X_t$ and $\RR_t$ as in \eqref{eq:xr}, we have
\begin{itemize}
  \item[(a)]$\X_t$ is a standard Brownian motion starting at $\int_0^1\uv_0(x) dx$.  
\item[(b)] $\X_t$ and $\RR_t$ are independent.
\end{itemize}
\end{lem}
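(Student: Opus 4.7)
The plan is to work with the mild formulation \eqref{eq:weak-form} at $J=1$ and exploit the fact that everything in sight is jointly Gaussian, so independence reduces to an uncorrelatedness computation. The one nontrivial ingredient is the identity $\int_0^1 G(t,x-y)\,dx = 1$ for every $t\ge 0, y\in[0,1]$, which holds because $G$ is the heat kernel on the circle $[0,1]$ and the constant function $1$ is its zero-mode (equivalently, $G(t,\cdot)$ is a probability density on $[0,1]$ by periodicity).

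For part (a), I would integrate the mild formula in $x$ and apply (stochastic) Fubini --- justified since the integrand is deterministic and square-integrable in Walsh's sense --- to obtain
\begin{equation*}
\X_t \;=\; \int_0^1 \uv_0(y)\,dy \;+\; \int_{[0,t]\times[0,1]} \W(dr\,dy).
\end{equation*}
The second term is $\W([0,t]\times[0,1])$, a $d$-dimensional Gaussian with independent components of variance $t$, and it has independent increments in $t$, so $\X_t$ is a standard $d$-dimensional Brownian motion starting from $\int_0^1 \uv_0(y)\,dy$. This gives (a).

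For part (b), subtracting the above from the mild formula yields
\begin{equation*}
\uv(t,x) - \X_t \;=\; \int_0^1 \bigl[G(t,x-y)-1\bigr]\uv_0(y)\,dy \;+\; \int_{[0,t]\times[0,1]} \bigl[G(t-r,x-y)-1\bigr]\,\W(dr\,dy).
\end{equation*}
Together with the formula for $\X_t$, the pair $\bigl(\{\X_t\}_{t\ge 0},\, \{\uv(t,x)-\X_t\}_{t\ge 0,\, x\in[0,1]}\bigr)$ is a jointly Gaussian family, so independence is equivalent to zero cross-covariance. Covariances across distinct coordinates of $\W$ vanish; for matching coordinates and (WLOG) $t\le s$ the Walsh isometry produces
\begin{equation*}
\Cov\bigl(\X_t^i,\; (\uv(s,x)-\X_s)^i\bigr) \;=\; \int_0^t\!\!\int_0^1 \bigl[G(s-r,x-y)-1\bigr]\,dy\,dr \;=\; \int_0^t \bigl[1-1\bigr]\,dr \;=\; 0,
\end{equation*}
using the key identity $\int_0^1 G(s-r,x-y)\,dy=1$. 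Hence $\X_\cdot$ is independent of the entire centered field $\uv(\cdot,\cdot)-\X_\cdot$, and since $\RR_t = \sup_{x\in[0,1]}|\uv(t,x)-\X_t|$ is a measurable functional of that field, $\X_t$ and $\RR_t$ are independent.

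There is no serious obstacle here; the only point requiring care is the stochastic-Fubini interchange used to compute $\X_t$, which is routine for deterministic square-integrable integrands against the Walsh white noise. The proof rests entirely on (i) the circle-periodicity identity $\int_0^1 G = 1$ and (ii) the Gaussian reduction of independence to covariance.
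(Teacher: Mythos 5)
Your proof is correct and follows essentially the same route as the paper: integrate the mild formula in $x$ using $\int_0^1 G(t,\cdot)=1$ to identify $\X_t$ as a Brownian motion, subtract to get the centered field, and reduce independence to the vanishing of the cross-covariance via the same identity. You are slightly more explicit in computing the covariance at two different times $t\le s$, which the paper glosses over, but the essential argument is identical.
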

\begin{proof}
It is easily checked that 
\[ \X_t = \int_0^1 \uv_0(x)dx +\int_0^t \W(dyds)\]
is a standard Brownian motion, and 
\[ \uv(t,x) -\X_t = \int_0^1 \left[ G(t,x-y)-1\right]\uv_0(y) dy +  \int_{[0,t]\times[0,1]}\left[ G(t-s, x-y)-1\right] \W (ds dy),\]
where $G=G^{(1)}$ is the heat kernel on the unit circle. Both these processes are Gaussian. The components of $\X_t$ and $\uv(t,x)-\X_t$ are uncorrelated since
\[ \int_0^t \int_0^1\left[ G(t-s, x-y)-1\right] \, dy ds =0.\]
The second part of the lemma immediately follows. 
\end{proof}
We will also need 
\begin{prop}\label{prop:u:alpha} Assume $\sup_x |\uv_0(x)|\le \frac{\alpha}{2}$.  Then there are constants $0<C_0<1$ and $K_0>0$ such that for all $\alpha\ge K_0$
\be \label{eq:u:alpha} \mP_0\left(\sup_{\stackrel{s\le \alpha^2}{x\in [0,1]}} \left|\uv(s,x)\right|\le \alpha,\; \sup_{x\in [0,1]}\left|\uv(\alpha,x)\right|\le \frac{\alpha}{2} \right) \ge C_0.\ee
\end{prop}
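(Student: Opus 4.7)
The plan is to use the mild formulation \eqref{eq:weak-form} at $J=1$,
\[
\uv(s,x) = \int_0^1 G(s,x-y)\uv_0(y)\,dy + \N(s,x),
\qquad
\N(s,x) = \int_{[0,s]\times[0,1]} G(s-r,x-y)\,\W(dr\,dy),
\]
together with the further split $\N(s,x) = B_s + [\N(s,x) - B_s]$, where $B_s := \int_0^1 \N(s,y)\,dy = \int_0^s\int_0^1 \W(dy\,dr)$ is a standard $d$-dimensional Brownian motion started at $\mathbf 0$ (the zeroth Fourier mode in $x$) and $\N - B$ collects the nonzero modes. Because distinct Fourier modes of space-time white noise are independent Gaussian measures, $B$ and $\N-B$ are independent; note also $\X_s = \X_0 + B_s$ with $\X_0 = \int_0^1 \uv_0$.

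Two deterministic bounds drive the reduction: $|G_s * \uv_0(x)| \le \sup|\uv_0| \le \alpha/2$ for every $s\ge 0$ (since $G_s$ is a probability kernel on the circle), and, by the spectral gap of $\tfrac12\partial_x^2$ on $[0,1]$, $\sup_x |G_\alpha * \uv_0(x) - \X_0| \le \epsilon := C\sup|\uv_0|\,e^{-c\alpha}$, which is $o(1)$ for $\alpha\ge K_0$. Setting $M = M(\alpha) := C'\sqrt{\log\alpha}$, both conditions in \eqref{eq:u:alpha} are therefore implied by the joint event
\[
(\mathrm A)\ \sup_{s\le\alpha^2,\,x\in[0,1]}|\N(s,x)-B_s|\le M,
\qquad
(\mathrm B)\ \sup_{s\le\alpha^2}|B_s|\le \tfrac{\alpha}{2}-M \ \text{ and }\ |\X_0+B_\alpha|\le \tfrac{\alpha}{2}-M-\epsilon,
\]
and by the independence above $\mP_0((\mathrm A)\cap(\mathrm B)) = \mP_0(\mathrm A)\mP_0(\mathrm B)$; it suffices to bound each factor below by a positive constant, uniformly for $\alpha\ge K_0$.

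For $(\mathrm A)$ I would expand $\N-B$ in the $x$-Fourier basis, obtaining a sum of independent $d$-dimensional Ornstein--Uhlenbeck processes indexed by $n\in\Z\setminus\{0\}$ with damping rates $2\pi^2 n^2$ and stationary variances $\propto n^{-2}$ (summable). This gives a uniformly bounded pointwise variance together with a tractable Gaussian modulus of continuity on $[0,\alpha^2]\times[0,1]$; a standard chaining argument (or Borell--TIS combined with a metric-entropy estimate) then yields $\E_0\bigl[\sup_{s,x}|\N(s,x)-B_s|\bigr]\le C''\sqrt{\log\alpha}$ with Gaussian concentration, so that choosing $C'$ large makes $\mP_0(\mathrm A)\ge 3/4$ for $\alpha\ge K_0$. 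For $(\mathrm B)$, on $[0,\alpha]$ the sup constraint is essentially automatic because a BM only moves $O(\sqrt\alpha)\ll \tfrac{\alpha}{2}-M$; writing $B_\alpha = \sqrt\alpha\,\eta$ with $\eta\sim\mathcal N(\mathbf 0,I_d)$ and rotating so $\X_0 = r\mathbf{e}_1$ with $r\le \alpha/2$, a direct expansion of $|\X_0+B_\alpha|^2$ reduces the endpoint condition (to leading order in $(M+\epsilon)/\sqrt\alpha = o(1)$) to the half-space event $\eta\cdot\mathbf{e}_1 \le -(M+\epsilon)/\sqrt\alpha + O(1/\sqrt\alpha)$, of probability $\ge \tfrac12 - o(1)$ uniformly in $r$. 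Conditioning on $B_\alpha$ with $|B_\alpha| = O(\sqrt\alpha)$, Brownian scaling turns $\sup_{\alpha\le s\le\alpha^2}|B_s|\le \tfrac{\alpha}{2}-M$ into the event that a unit BM starting at a point of norm $O(1/\sqrt\alpha)$ stays inside $B(\mathbf 0,\tfrac12 - o(1))$ for a time of order one, which has uniformly positive probability. Combining these ingredients via the strong Markov property at time $\alpha$ gives $\mP_0(\mathrm B)\ge c>0$, and hence the probability in \eqref{eq:u:alpha} is bounded below by $(3/4)c =: C_0 > 0$.

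The main obstacle will be step $(\mathrm A)$: rigorously controlling the supremum of the infinite-dimensional centered Gaussian field $\N - B$ over $[0,\alpha^2]\times[0,1]$ using the explicit Fourier covariance structure and a chaining / Borell--TIS argument. Once that Gaussian maximum estimate is secured, step $(\mathrm B)$ reduces to a routine Brownian scaling computation.
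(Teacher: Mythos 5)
Your argument is correct, and it takes a genuinely different route from the paper's. The paper first proves the estimate for $\uv_0\equiv\mathbf 0$ using the independence of $\X$ and $\RR$ from Lemma~\ref{lem:cmrind}, bounding $\mP_0\left(\sup_{s\le\alpha^2}\RR_s>\alpha/4\right)$ by splitting $[0,\alpha^2]$ into unit time blocks and invoking the variance and modulus-of-continuity bounds of Lemmas~3.1 and~3.4 of \cite{athr-jose-muel}; the general initial condition is then handled by a Girsanov change of measure whose drift is engineered to annihilate $G_t*\uv_0$ at time $\alpha^2$, at the cost of a Cauchy--Schwarz penalty. You bypass Girsanov entirely, observing that $G_s$ is a probability kernel (so $|G_s*\uv_0|\le\alpha/2$ for all $s$) and that the spectral gap collapses $G_\alpha*\uv_0$ onto the constant $\X_0$ up to an exponentially small error; the initial profile then enters only through the scalar $\X_0$ with $|\X_0|\le\alpha/2$, and you allocate the remaining $\alpha/2$-budget to the noise. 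The price of skipping Girsanov is that the endpoint event $|\X_0+B_\alpha|\le\alpha/2-M-\epsilon$ has vanishing probability unless $M=o(\sqrt\alpha)$ (for $|\X_0|$ near $\alpha/2$ it reduces, after Brownian scaling, to a half-space event in $\eta=B_\alpha/\sqrt\alpha$), so you must control $\sup_{s\le\alpha^2,\,x}|\N(s,x)-B_s|$ at the much finer threshold $M\asymp\sqrt{\log\alpha}$; your Dudley/Borell--TIS plan is exactly the right tool there, since the field has uniformly bounded variance and metric entropy $\log N(\epsilon)\lesssim\log\alpha+\log(1/\epsilon)$, giving an entropy integral $O(\sqrt{\log\alpha})$ with Gaussian concentration at unit scale. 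By contrast the paper needs only the coarse threshold $\alpha/4$, for which the block decomposition and the quoted tail estimates suffice. A minor bonus of your version: the Girsanov drift in the paper's proof vanishes only at $t=\alpha^2$, so that argument really delivers the endpoint constraint at time $\alpha^2$, whereas your bound $|G_t*\uv_0-\X_0|=O(e^{-ct})$ is already negligible at time $\alpha$ and handles the statement as literally written.
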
 
\begin{proof}
Let us consider first the case $\uv_0\equiv {\mf 0}$.  From the previous lemma 
\[\mP_0\left(\sup_{\stackrel{s\le \alpha^2}{x\in [0,1]}} \left|\uv(s,x)\right|\le \frac{\alpha}{2}\right)\ge \mP_0\left(\sup_{s\le \alpha^2} |\RR_s|\le \frac{\alpha}{4}\right) \mP_0\left(\sup_{s\le \alpha^2} |\X_s|\le \frac{\alpha}{4}\right). \]
Since the last term is a positive constant independent of $T$, it is enough to show that there is a $K_0>0$ such that 
\[ \sup_{\alpha\ge K_0}\;\mP_0\left(\sup_{s\le \alpha^2} |\RR_s|>\frac{\alpha}{4}\right) <1.  \]
Now 
\begin{align} \label{eq:R:tail}
\mP_0\left(\sup_{s\le \alpha^2} |\RR_s|>\frac{\alpha}{4}\right) =\mP_0\left(\sup_{\stackrel{s\le \alpha^2}{x\in [0,1]}} |\uv(s,x) -\X_s|>\frac{\alpha}{4}\right). 
\end{align}
By splitting the time interval into subintervals of length $1$ we have the bound 
\begin{equation} \label{eq:u-x}
\begin{split}
& \mP_0\left(\sup_{\stackrel{s\le \alpha^2}{x\in [0,1]}} |\uv(s,x) -\X_s|>\frac{\alpha}{4}\right)\\
& \le \sum_{k=0}^{[\alpha^2]+1} \mP_0\left(\sup_{\stackrel{s\in [k,k+1]}{x\in [0,1]}} |\uv(s,x) -\X_s|>\frac{\alpha}{4}\right)\\
&\le \sum_{k=0}^{[\alpha^2]+1} \mP_0\left( |\uv(k,0) -\X_k|>\frac{\alpha}{8}\right) \\
&\qquad \qquad+\sum_{k=0}^{[\alpha^2]+1} \mP_0\left(\sup_{\stackrel{s\in [i,i+1]}{x\in [0,1]}} |\left[\uv(k,0)-\X_k\right] -\left[\uv(s,x)-\X_s\right]|>\frac{\alpha}{8}\right)
\end{split}
\end{equation}
Using the standard Fourier decomposition of $G(t,x)$ (see Section 3 of \cite{athr-jose-muel}) one obtains that each coordinate of $\uv(i,0) -\X_i$ has variance
\begin{align*}
 \int_0^k \int_0^1 \left[G(s,y)-1\right]^2\, dy ds 
&= \int_0^k \int_0^1 G^2(s,y) \, dy ds -k\\
&=\int_0^k \sum_{l\ge 1} \exp\left(-(2\pi l)^2 s \right) \, ds \\
&\le C .
\end{align*}
See Lemma 3.1 of \cite{athr-jose-muel} for details. 
Therefore for large $\alpha$
\[  \sum_{k=0}^{[\alpha^2]+1} \mP_0\left( |\uv(k,0) -\X_k|>\frac{\alpha}{8}\right) \le \exp\left(-C\alpha^2\right).\]

Now we turn to the last term in \eqref{eq:u-x}. Consider the process 
\[ \mathbf{M}(s,x)=\left[\uv(k,0)-\X_k\right] -\left[\uv(s,x)-\X_s\right],\quad s\in [k, k+1],\; x\in [0,1].\]
Note that $\mathbf{M}(k,0)=0$. A quick calculation gives
\begin{align*}
\M(s,x)-\M(s,\tilde x) &= \int_{[0,s]\times[0,1]}\left[G(s-r, \tilde x - y)-G(s-r, x-y)\right] \W(dr dy)
\end{align*}
whose components have variance less than $C|x-y|$ (see Lemma 3.1 of \cite{athr-jose-muel} for details).  Similarly for $k\le s<\tilde s\le k+1$ we obtain
\begin{align*}
\M(s,x)-\M(\tilde s,x)  & = \int_{[0,s]\times[0,1]}\left[G(\tilde s-r, x-y)-G(s-r, x-y)\right] \W (dr dy) \\
&\quad + \int_{[s,\tilde{s}]\times[0,1]} G(\tilde s-r,x-y) \W(dr dy) +\left[\X_s-\X_{\tilde s}\right].
\end{align*}
Following Lemma 3.1 of \cite{athr-jose-muel} we obtain that the components have variance less than $C\sqrt{\tilde s-s}$. Note that $\tilde s\le s+1$ so that the variance of the components of $\X_{\tilde s}-\X_s$ are also bounded by $C\sqrt{\tilde s-s}$. 

Therefore the conditions of Lemma 3.4 of \cite{athr-jose-muel} are satisfied, and we obtain for large $\alpha$
\[ \sum_{k=0}^{[\alpha^2]+1} \mP_0\left(\sup_{\stackrel{s\in [k,k+1]}{x\in [0,1]}}\left |\left[\uv(k,0)-\X_k\right] -\left[\uv(s,x)-\X_s\right]\right|>\frac{\alpha}{8}\right)
 \le \exp\left(-C\alpha^2\right).\]
Returning to \eqref{eq:R:tail} we obtain
\[\mP_0\left(\sup_{s\le \alpha^2} |\RR_s|>\frac{\alpha}{2}\right) \le \exp\left(-C\alpha^2\right) <1, \]
uniformly in $\alpha\ge K_0$ for some $K_0>0$. This completes the proof of the proposition in the case that $\uv_0\equiv {\mf 0}$.

In the general case $\sup_x|\uv_0(x)|\le \frac{\alpha}{2}$, we apply a Girsanov change of measure argument. Consider the measure $\mathbb Q_0$ given by 
\[ \frac{d \mathbb{Q}_0}{d \mathbb{P}_0} =\exp\left(-\int_{[0,\alpha^2]\times[0,1]} \frac{(G_s*\uv_0)(y)\cdot  \W(ds dy)}{\alpha^2} -\frac12\int_0^{\alpha^2}\int_0^1 \frac{\left|G_s*\uv_0)(y)\right|^2}{\alpha^4} \right),\]
where $G_s*\uv_0$ is the convolution of $G(s,\cdot)$ with $\uv_0$. Under the measure $\mathbb Q_0$, 
\[ \widetilde\W (ds dy) = \W(ds dy) +\frac{(G_s*\uv_0)(y)}{\alpha^2} ds dy\]
is a white noise (see \cite{allo}). Moreover 
\be \label{eq:u:g} \uv(t,x) =\left(1-\frac{t}{\alpha^2}\right) (G_t*\uv_0)(x) +\int_{[0,t]\times[0,1]} G(t-s, x-y) \widetilde{\W} (ds dy),\quad 0\le t\le \alpha^2, \ee
and the first term is $0$ at time $t=\alpha^2$. The case of $\uv_0\equiv {\mf 0}$ shows 
\[\mathbb Q_0\left(\sup_{\stackrel{t\le \alpha^2}{x\in [0,1]}} \left|\int_{[0,t]\times[0,1]} G(t-s, x-y) \widetilde{\W} (ds dy)\right|\le \frac{\alpha}{2}\right) \ge \tilde C_0, \]
for some $\tilde C_0>0$. An application of the Cauchy-Schwarz inequality gives 
\begin{align*} &\mathbb P_0\left(\sup_{\stackrel{t\le \alpha^2}{x\in [0,1]}} \left|\int_{[0,t]\times[0,1]} G(t-s, x-y) \widetilde{\W} (ds dy)\right|\le \frac{\alpha}{2}\right) \\
&\ge \mathbb Q_0\left(\sup_{\stackrel{t\le \alpha^2}{x\in [0,1]}} \left|\int_{[0,t]\times[0,1]} G(t-s, x-y) \widetilde{\W} (ds dy)\right|\le \frac{\alpha}{2}\right)^{1/2} \cdot 
\mathbb{E}_{0}\left[\left(\frac{d \mathbb{Q}_0}{d \mathbb{P}_0}\right)^2 \right]^{-1/2} \\
&\ge \tilde C_0^{1/2} \exp\left(-\frac{1}{4}\int_0^{\alpha^2}\int_0^1\frac{\alpha^2}{4\alpha^4} dy ds\right)\\
&= \tilde C_0^{1/2}\exp\left(-\frac{1}{16}\right).
\end{align*}
The first term in \eqref{eq:u:g} is at most $\frac{\alpha}{2}$, and so the above lower bound is also a lower bound for the probability in \eqref{eq:u:alpha}. This completes the proof of the proposition. 
\end{proof}

\begin{proof}[Proof of lower bound in Theorems \ref{thm1} and \ref{thm2}:] Following Remark \ref{rem:scaling} we find the lower bound for $S_T^{\HH, 1,\nu}$. As indicated earlier for the string to survive in a {\em hard} obstacle environment, it must avoid the obstacles. We will use the same strategy of survival for the {\em soft} obstacle as well.
 % A lower bound is obtained by considering the event that a large ball around the origin is free of obstacles, and requiring that the string stays within this ball up to time $T$. 
  Let 
\[ \mathcal{O} = \bigcup_{i\ge 1} B(\boldsymbol{\xi}_i,a)\]
be the obstacle set. For $T>0$, let $\alpha\equiv\alpha_T>0$ be a parameter which will be chosen to devise the optimal strategy. Due to the support of $\HH$ being in a ball of radius $a$ one obtains
\begin{equation}\label{eq:st1n} \begin{split} S_T^{\HH, 1,\nu} 
% &\ge \mP\Big(\uv(t,x) \notin \mathcal O,\;\; t\in [0,T],\, x\in [0,1]\Big) \\
&\ge \mP(\mathcal B_T\cap \mathcal C_T) \\
&= \mP_0(\mathcal B_T) \mP_1(\mathcal C_T),
\end{split}\end{equation}
where 
\begin{align*}
\mathcal B_T &= \left\{\sup_{\stackrel{s\in [0,T]}{x\in [0,1]}}\left|\uv(s,x)\right|\le \alpha\right\},\\
\mathcal C_T &= \Big\{\text{there are no }\boldsymbol{\xi}_i \text{ in the ball } B\left(\mathbf{0},\alpha+a\right)\Big\}.
\end{align*}
It is important to observe here that the above argument does not depend on whether the obstacles are hard or soft. Clearly
\be \label{eq:ct} \mP_1(\mathcal C_T) = \exp\left(-\nu c_d (\alpha+a)^d\right)\ee
for some dimension dependent constant $c_d$. 

We will next estimate $\mP_0(\mathcal B_T)$ by using Proposition \ref{prop:u:alpha}.
% The event in \eqref{eq:u:alpha} requires the profile at the terminal time $\alpha$ to have supremum norm at most $\frac{\alpha}{2}$. 
Indeed, an application of the Markov property and \eqref{eq:u:alpha}  yields 
\be \label{eq:bt} \mP_0(\mathcal B_T) \ge \mP_0\left(\sup_{\stackrel{s\le \alpha^2}{x\in [0,1]}} \left|\uv(s,x)\right|\le \alpha,\; \sup_{x\in [0,1]}\left|\uv(\alpha,x)\right|\le \frac{\alpha}{2} \right)^{\frac{T}{\alpha^2}} \leq \exp\left(\frac{T}{\alpha^2} \log C_0\right),\ee
for $\alpha^2\ll T.$

Using \eqref{eq:ct} and \eqref{eq:bt} in \eqref{eq:st1n} we have for $\alpha\ge K_0$
\begin{equation*}
\begin{split}
S_T^{\HH, 1,\nu} &\ge  \exp\left(-\nu c_d (\alpha+a)^d\right) \exp\left(\frac{T}{\alpha^2} \log C_0\right) \\
&\ge  \exp\left(-\nu c_d 2^da^d\right) \exp\left(-\nu c_d 2^d\alpha^d+\frac{T}{\alpha^2} \log C_0\right).
\end{split}
\end{equation*}
%A simple calculus computation shows that the exponent in the second term on the right hand side is maximized when $\alpha_T=CT^{\frac{1}{d+2}}$, and we obtain 
%\[ S_T^{\HH, 1,\nu} \ge \exp\left(-\nu c_d 2^da^d\right) \exp\left(-C_1 T^{\frac{d}{d+2}}\right), \]
%for some constant $C_1>0$ independent of $T$, as long as $T\ge T_0$. This gives a lower bound in the case of $J=1$. 

For general $J\ge 1 $ we use \eqref{eq:scaling}, as well as the fact that $\widetilde a = aJ^{-\frac12},\, \widetilde \nu= \nu J^{\frac{d}{2}}$ to obtain 
\begin{equation*}
\begin{split}
S_T^{\HH, J,\nu} &\ge  \exp\left(-\nu c_d 2^da^d\right) \exp\left(-\nu J^{\frac d2}c_d 2^d\alpha^d+\frac{T}{J^2\alpha^2} \log C_0\right).
\end{split}
\end{equation*}
A simple calculus computation shows that the maximum of the exponent in the second term is attained at $\alpha=C(d,\nu) \left(\frac{T}{J^{2+\frac{d}{2}}}\right)^{\frac{1}{d+2}}$ so that 
\be \label{eq:endlb}S_T^{\HH, J,\nu} \ge  \exp\left(-\nu c_d 2^da^d\right) \exp\left(-C_1(d,\nu)\left(\frac{T}{J}\right)^{\frac{d}{d+2}}\right),\ee
for a constant $C_1(d,\nu)$ independent of $J,\, T$, as long as $\alpha\ge K_0$ or equivalently $T\ge C_2(d,\nu) J^{2+\frac{d}{2}}$. 
\end{proof}

\section{Preliminaries for the upper bounds in Theorems \ref{thm1} and \ref{thm2}}
\label{sec:prelim}
In this section we prove several preliminary lemmas required for the proof. We begin with Section \ref{subsec:ews}, where we define the stopping times $\{\tau_i\}$ precisely and show that there are order of $T^{\frac{d}{d+2}}$ such times in $[0,T]$ with very high probability. In Section  \ref{subsec:sts}, we define the crucial stopping times $\{T_i\}$ at which we will consider the volume of the sausage around $\uv(T_i,\cdot)$. We will choose the $\{T_i\}$ from the $\{\tau_i\}$ so that
\begin{enumerate}
\item $\uv(T_i,\cdot)$ has a larger volume than the sausage of radius $a/2$ around $\N\left(T_{i-1}, T_i\right)$,
\item the volume of the sausage around $\N(T_{i-1}, T_i)$ of radius $\frac{a}{2}$ is at least ${\mathcal{C}}_{\gamma} a^{d-2+\gamma}$, and 
\item the range of $\N(T_{i-1}, T_i)$ has volume less than or equal to $\Lambda$.
\end{enumerate}
Then in Section \ref{subsec:stifp} we show is that there are sufficiently many 
times $\{T_i\}$, and they are far enough apart to ensure that the sausages at these times do not intersect and the gaps between these times have finite mean. Finally we conclude with Section \ref{subsec:esoft} where we prove some estimates needed for the soft obstacle case.

\subsection{Using Estimates of the Wiener Sausage} \label{subsec:ews}
For $ \Lambda> 1$ (to be chosen specifically in Lemma \ref{lem:r:unif}), it is useful to consider the sausage of radius $4 \Lambda$ around the center of mass $\X_T$:
\be \label{eq:x:s} \mathcal{X}_T(4 \Lambda) :=\bigcup_{0\le t\le T}\left\{\X_s + B(\mathbf{0},4 \Lambda)\right\}. \ee
This is the well studied Wiener sausage.
\begin{lem}[\cite{berg-bolt-holl}, \cite{bolt}, \cite{dons-vara}, \cite{szni}] \label{lem:ws:tail}
Let $d\ge 2$ and $\Lambda>1$. There exists $C(d,\Lambda)>0$ such that for $T>0$ we have 
\be \label{eq:ld:vol}
 \mP_0\left(\left|\mathcal X_T(4\Lambda )\right|\le T^{\frac{d}{d+2}}\right) \le  \exp\left(- C(d,\Lambda)T^{\frac{d}{d+2}}\right).
\ee
%where $\lambda_d>0$ be the smallest Dirichlet eigenvalue of $-\Delta$ on the unit ball $B(\mathbf{0},1)$.
\end{lem}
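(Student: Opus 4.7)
My plan is to deduce this lower-tail bound from the classical Donsker--Varadhan Laplace asymptotics for the volume of the Wiener sausage. By Lemma~\ref{lem:cmrind}(a), $\X_t$ is a standard $d$-dimensional Brownian motion, so $\mathcal{X}_T(4\Lambda)$ is a genuine Wiener sausage of radius $4\Lambda$, and the results of the cited papers apply verbatim. The key input I would use is that for every fixed $\nu>0$,
$$\limsup_{T\to\infty}\; T^{-d/(d+2)}\log \E_0\!\left[\exp\bigl(-\nu|\mathcal{X}_T(4\Lambda)|\bigr)\right] \;\le\; -k(d,\nu,\Lambda),$$
for some $k(d,\nu,\Lambda)>0$ which, as $\nu\downarrow 0$, is asymptotically of order $\nu^{2/(d+2)}$. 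In particular $k(d,\nu,\Lambda)/\nu \to \infty$ as $\nu \to 0$, because $2/(d+2)<1$ for $d\ge 2$.

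The first step is an exponential Chebyshev inequality at a parameter $\nu>0$ to be chosen:
$$\mP_0\!\left(|\mathcal{X}_T(4\Lambda)|\le T^{d/(d+2)}\right)\;\le\; \exp\bigl(\nu T^{d/(d+2)}\bigr)\, \E_0\!\left[\exp\bigl(-\nu|\mathcal{X}_T(4\Lambda)|\bigr)\right].$$
The second step is to fix $\nu=\nu(d,\Lambda)$ small enough that $k(d,\nu,\Lambda)\ge 2\nu$; this is possible by the asymptotic behaviour just recalled. Combining with the Donsker--Varadhan estimate produces, for all $T$ beyond a threshold $T_0=T_0(d,\Lambda)$, a bound of the form $\exp(-\nu T^{d/(d+2)})$, which is of the desired shape with $C(d,\Lambda)=\nu$. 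For $T\le T_0$ the stated inequality holds trivially after possibly shrinking $C(d,\Lambda)$, since one always has the deterministic lower bound $|\mathcal{X}_T(4\Lambda)|\ge |B(\mathbf 0,4\Lambda)|=c_d(4\Lambda)^d$, which already exceeds $T^{d/(d+2)}$ for all small enough $T$, leaving only a bounded window to absorb into the constant.

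I do not expect any genuine obstacle: the substantive content sits in the cited Donsker--Varadhan and Wiener-sausage references, and the only tension in the argument --- the Chebyshev cost $\nu\,T^{d/(d+2)}$ against the Laplace gain $k(d,\nu,\Lambda)\,T^{d/(d+2)}$ --- is resolved precisely because $k(d,\nu,\Lambda)$ decays only like $\nu^{2/(d+2)}$, strictly slower than linearly in $\nu$. The only point that would need a little care is matching conventions on the radius of the sausage between the various cited sources, but since all those results give the same leading order and we only need an inequality (not sharp asymptotics), this is routine.
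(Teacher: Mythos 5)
The paper does not prove Lemma~\ref{lem:ws:tail}; it is cited from the Wiener-sausage literature (\cite{dons-vara}, \cite{bolt}, \cite{berg-bolt-holl}, \cite{szni}) and used as a black box. Your derivation via the Donsker--Varadhan Laplace asymptotic
\[\lim_{T\to\infty} T^{-d/(d+2)}\log\E_0\!\left[\exp\left(-\nu\left|\mathcal X_T(4\Lambda)\right|\right)\right] = -k(d,\nu),\qquad k(d,\nu)\asymp \nu^{2/(d+2)},\]
combined with exponential Chebyshev, is precisely the standard route to this inequality and is correct. The decisive point you isolate --- that the Chebyshev cost $\nu T^{d/(d+2)}$ is dominated by the Laplace gain $k(d,\nu)T^{d/(d+2)}$ once $\nu$ is small, because $\nu^{2/(d+2)}\gg\nu$ as $\nu\downarrow 0$ (using $d\ge 2$ so that $2/(d+2)<1$) --- is exactly the mechanism that makes the lower tail exponentially small at the scale $T^{d/(d+2)}$. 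Your treatment of small $T$ is also fine: for $T$ below the threshold $T_1:=\bigl(c_d(4\Lambda)^d\bigr)^{(d+2)/d}$ the event is empty since $\left|\mathcal X_T(4\Lambda)\right|>c_d(4\Lambda)^d$ almost surely, and on the remaining compact window $[T_1,T_0]$ the probability is bounded away from $1$ (by continuity in $T$ and the fact that it vanishes as $T\downarrow T_1$ and is $<1$ for each fixed $T$), so the constant can be shrunk to absorb it. One minor accounting remark: the Donsker--Varadhan leading constant is actually independent of the sausage radius at first order, so writing $k(d,\nu,\Lambda)$ is redundant; the $\Lambda$-dependence of your final $C(d,\Lambda)$ enters only through the threshold $T_0$ and through $T_1$.
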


Let $\tau_0=0$ and consider consecutive stopping times $\tau_i$ defined as 
\be \label{eq:tau} \tau_{i+1}=\inf\left\{t>\tau_i: \text{dist}\left(\X_t, \bigcup_{k=0}^i \X_{\tau_k}\right)\ge  4 \Lambda\right\}.\ee
Note that $\X_{\tau_{i}}$ is on the boundary of the region $\bigcup_{k=0}^{i-1} B(\X_{\tau_k}, 4 \Lambda)$. Let 
\be \label{eq:nt}
\#(T):= \left|\left\{i\ge 1: \tau_i \le T\right\}\right| 
\ee
be the number of $\tau_i$'s with $i\ge 1$ before time $T$.

\begin{lem} \label{lem:nt:tail}Let $d\ge 2$ and $\Lambda>1$. There exists $C_{d}>0$ such that for $T>0$ we have 
\be \label{eq:nt:tail} \mP_0\left(\#(T)\le \frac{C_dT^{\frac{d}{d+2}}}{\Lambda^d} \right)  \le \exp\left(- C(d,\Lambda)T^{\frac{d}{d+2}}\right),\ee
where $C(d,\Lambda)$ is the same as in Lemma \ref{lem:ws:tail}.
\end{lem}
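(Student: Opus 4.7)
The plan is to show that $\#(T)$ controls the volume of the Wiener sausage $\mathcal{X}_T(4\Lambda)$ from above, and then use Lemma \ref{lem:ws:tail} to deduce a lower tail bound for $\#(T)$.

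First I would establish a deterministic geometric containment. By definition \eqref{eq:tau}, for $t \in [\tau_i, \tau_{i+1})$ the point $\X_t$ lies within distance $4\Lambda$ of $\bigcup_{k=0}^{i}\X_{\tau_k}$. Hence every $\X_s$ with $s \le T$ belongs to $\bigcup_{k=0}^{\#(T)} B(\X_{\tau_k}, 4\Lambda)$, and the triangle inequality gives
\[
\mathcal{X}_T(4\Lambda) \;=\; \bigcup_{s\le T} B(\X_s, 4\Lambda) \;\subseteq\; \bigcup_{k=0}^{\#(T)} B(\X_{\tau_k}, 8\Lambda).
\]
A union bound on volumes then yields $|\mathcal{X}_T(4\Lambda)| \le c_d (8\Lambda)^d\,(\#(T)+1)$ for a dimension constant $c_d$.

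Next I would choose $C_d := 1/(2\cdot c_d 8^d)$. On the event $\{\#(T) \le C_d T^{d/(d+2)}/\Lambda^d\}$ the previous display gives
\[
|\mathcal{X}_T(4\Lambda)| \;\le\; c_d 8^d \Lambda^d\!\left(\frac{C_d T^{d/(d+2)}}{\Lambda^d} + 1\right) \;\le\; \tfrac{1}{2}\,T^{d/(d+2)} + c_d 8^d \Lambda^d,
\]
which is at most $T^{d/(d+2)}$ once $T \ge T_0(d,\Lambda)$ for a suitable threshold. For such $T$ the event is contained in $\{|\mathcal{X}_T(4\Lambda)| \le T^{d/(d+2)}\}$, and Lemma \ref{lem:ws:tail} delivers the bound $\exp(-C(d,\Lambda) T^{d/(d+2)})$ with precisely the same constant $C(d,\Lambda)$.

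Finally, for the residual range $0 < T < T_0(d,\Lambda)$ the right-hand side of \eqref{eq:nt:tail} is bounded below by a positive constant depending only on $d$ and $\Lambda$; since the probability in question is at most $1$, we may ensure the inequality holds there by (if needed) decreasing $C_d$ once more — but since $C_d$ on the left is the only constant that must be dimension-only and the exponential constant $C(d,\Lambda)$ is already permitted to depend on $\Lambda$, this causes no difficulty. The only real step is the geometric containment in the first paragraph; once that is in hand the rest is bookkeeping, so I do not anticipate a significant obstacle in this lemma.
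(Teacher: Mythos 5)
Your argument is essentially identical to the paper's: the same geometric containment $\mathcal{X}_T(4\Lambda)\subseteq\bigcup_{k\le \#(T)}B(\X_{\tau_k},8\Lambda)$ (the paper writes $\#(T)+1$ in the upper index but this is harmless), the same volume union bound, and the same appeal to Lemma~\ref{lem:ws:tail}. One small caveat concerns your last paragraph: for small $T$ the event $\{\#(T)\le C_dT^{d/(d+2)}/\Lambda^d\}$ reduces to $\{\#(T)=0\}=\{\tau_1>T\}$, whose probability does not decrease when $C_d$ is shrunk, so ``decreasing $C_d$ once more'' does not repair the bound on the residual range; the discrepancy there is an issue already implicit in the paper's own chain of inequalities (its first step also requires $T$ to exceed a $\Lambda$-dependent threshold), and is immaterial to how the lemma is actually used, but your proposed patch wouldn't close it.
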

\begin{proof} We first claim that 
\be \label{eq:b:s}
\bigcup_{i=0}^{\#(T)+1} B\left(\X_{\tau_i}, 8 \Lambda\right) \supset \mathcal X_T(4 \Lambda). 
\ee
Clearly we need to just consider the behavior of the sausage for the time points strictly between $\tau_i$ and $\tau_{i+1}$ for any fixed $i$. By the definition of $\tau_{i+1}$ the path $\X_t,\, \tau_i\le t\le \tau_{i+1}$ is inside $\bigcup_{k=0}^iB(\X_{\tau_k}, 4\Lambda )$. Therefore any such $\X_t$ is within $4\Lambda$ distance of some $\X_{\tau_k}, k\le i$.  It then follows that $B(\X_t, 4 \Lambda) \subset B(\X_{\tau_k}, 8 \Lambda)$. The claim \eqref{eq:b:s} then follows immediately.

Using \eqref{eq:b:s} and the formula for the volume of a $d$ sphere of radius $8\Lambda$ 
\be \label{eq:nt:vol} (\#(T)+2)\frac{ \pi^{\frac d2} (8\Lambda)^{d}}{\Gamma(\frac d2+1)}  \ge \left|\mathcal X_T(4 \Lambda)\right| .  \ee
Therefore 
\begin{align*}
 \mP_0\left(\#(T)\le \frac{C_dT^{\frac{d}{d+2}}}{\Lambda^d}\right)  &\le \mP_0 \left((\#(T)+2) \frac{\pi^{\frac{d}{2}} (8\Lambda)^d}{\Gamma(\frac{d}{2}+1)}\le \frac{C_d16^d \pi^{\frac{d}{2}} T^{\frac{d}{d+2}} }{\Gamma(\frac{d}{2}+1)}\right) \\
 &\le \mP_0\left(\left|\mathcal X_T(4\Lambda)\right| \le \frac{C_d16^d \pi^{\frac{d}{2}} T^{\frac{d}{d+2}}}{\Gamma(\frac{d}{2}+1)}\right) \\
 &\le\exp\left(- C(d,\Lambda) T^{\frac{d}{d+2}}\right),
\end{align*}
if we choose
\[C_d^{-1}=  \frac{16^d\pi^{\frac{d}{2}}}{\Gamma\left(\frac{d}{2}+1\right)}.\]
This completes the proof of the lemma. 
\end{proof}

\subsection{Stopping times for string} \label{subsec:sts}
For a $\R^d$ valued function $\mathbf f$ defined on $[0,1]$, we denote the {\it range} of $\mathbf f$ by 
\be\label{eq:range}
\mathcal{R}(\mathbf f) := \sup_{x, y\in [0,1]} |\mathbf f(x)-\mathbf f(y)|
\ee
We will need the following lemma
\begin{lem}\label{lem:range:conv} Let $\mathbf f:[0,1]\to \R^d$. We have for $t\ge 1$ 
\begin{align*}
\car(G_t*\mathbf f) \le 4d e^{-4\pi^2 t} \left\| \mathbf f - \int_0^1 \mathbf{f}(x) dx \right \|_2\le   4 de^{-4\pi^2 t} \car(\mathbf f)
\end{align*}
\end{lem}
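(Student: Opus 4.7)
The plan is to use Fourier analysis on the circle to diagonalise the convolution operator. The heat kernel on $[0,1]$ with periodic boundary conditions admits the spectral expansion $G(t,x) = \sum_{k \in \Z} e^{-\lambda_k t}\, e^{2\pi i k x}$, so that convolution with $G_t$ multiplies the $k$th Fourier coefficient by $e^{-\lambda_k t}$. The eigenvalue $\lambda_0 = 0$ means that the mean $\int_0^1 \mathbf f$ is preserved, while every non-constant mode is strictly damped, with rate controlled by the principal non-zero eigenvalue $\lambda_1$.

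Setting $\mathbf g := G_t * \mathbf f$ and $\bar{\mathbf f} := \int_0^1 \mathbf f(x)\,dx$, I would expand
\begin{align*}
\mathbf g(x) - \mathbf g(y) = \sum_{k \ne 0} e^{-\lambda_k t}\, \hat{\mathbf f}(k)\,\bigl(e^{2\pi i k x} - e^{2\pi i k y}\bigr),
\end{align*}
and apply Cauchy--Schwarz in $\ell^2$ coordinate by coordinate, using the trivial bound $|e^{2\pi i k x} - e^{2\pi i k y}| \le 2$. Parseval collapses the $|\hat f_j|^2$ factor to $\|f_j - \bar f_j\|_2^2$, so after summing the squared componentwise bounds over $j = 1,\dots,d$ and taking a supremum over $x,y\in[0,1]$ one arrives at
\begin{align*}
\car(G_t * \mathbf f)^2 \le 4d \Bigl(\sum_{k \ne 0} e^{-2\lambda_k t}\Bigr) \bigl\|\mathbf f - \bar{\mathbf f}\bigr\|_2^2.
\end{align*}
The hypothesis $t \ge 1$ enters only to guarantee that the tail $\sum_{|k|\ge 2} e^{-2\lambda_k t}$ is negligible compared with the leading term $e^{-2\lambda_1 t}$ (the remainder is a geometric-type series with very small ratio $e^{-2\lambda_1}$); taking square roots and absorbing the factors of order one then delivers the stated exponential prefactor governed by $\lambda_1$.

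The second inequality follows immediately from the pointwise estimate $|\mathbf f(x) - \bar{\mathbf f}| = \bigl|\int_0^1 (\mathbf f(x) - \mathbf f(y))\,dy\bigr| \le \car(\mathbf f)$, which gives $\|\mathbf f - \bar{\mathbf f}\|_2 \le \|\mathbf f - \bar{\mathbf f}\|_\infty \le \car(\mathbf f)$. The only real obstacle is bookkeeping of numerical constants: one has to verify that the factors coming out of Cauchy--Schwarz and the geometric tail estimate assemble exactly into the prefactor $4d$ and the advertised exponent. Conceptually the argument is nothing more than diagonalisation plus a single Cauchy--Schwarz, with the spectral gap of the periodic Laplacian driving the exponential decay of everything but the mean.
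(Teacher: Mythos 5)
Your argument is essentially the paper's own: expand in the Fourier basis on the circle (diagonalising $G_t*$), drop the zero mode, apply Cauchy--Schwarz across the nonzero modes using $|e^{2\pi ikx}-e^{2\pi iky}|\le 2$, and invoke Parseval to recover $\|\mathbf f - \int_0^1\mathbf f\|_2$; the $t\ge1$ hypothesis is used in both to absorb the geometric tail $\sum_{k\ge 1}e^{-c k^2 t}$ into the leading mode. The only cosmetic difference is how the $d$ coordinates are aggregated — the paper uses $\car(G_t*\mathbf f)\le\sum_j\car(G_t*f_j)$ to produce the factor $d$, while you sum the squared componentwise bounds, which in fact yields the sharper $\sqrt d$; either is fine for the stated inequality.
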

\begin{proof}
We expand each component $\mathbf f $ in the Fourier basis
\[ f_j(x) = \sum_k a^{(j)}_k e^{i2\pi k x} .\]
Then
\begin{align*}
\sup_{x,y} \Big|(G_t*f_j) (x) - (G_t*f_j) (y)\Big| &= \sup_{x,y}\left| \sum_{k\ne 0} e^{-4\pi^2 k^2 t} a_k^{(j)} \left[e^{i2\pi k x} -e^{i2\pi k y}\right]\right|  \\
&\le 4 e^{-4\pi^2 t}\left(\sum_{k \ne 0} \left[a_k^{(j)}\right]^2\right)^{1/2}
\end{align*}
Now observe that $\int_0^1 f_j(x) dx$ is the zeroth Fourier coefficient of $f_j$ so that 
\[\left(\sum_{k \ne 0} \left[a_k^{(j)}\right]^2\right)^{1/2}=  \left\|f_j -\int_0^1 f_j(x) dx\right\|_2 \le \left\| \mathbf f - \int_0^1 \mathbf{f}(x) dx \right \|_2. \]
Clearly $\car(\mathbf f)$ is an upper bound for the right hand side. Finally note that 
\[ \car(G_t*\mathbf f) \le \sum_{i=1}^d \car(G_t* f_j)\]
which gives the factor $d$ in the bound.
\end{proof}
For the rest of this article, we let \\
\fbox{
 \addtolength{\linewidth}{-2\fboxsep}%
 \addtolength{\linewidth}{-2\fboxrule}%
 \begin{minipage}{\linewidth}
  \begin{equation} \label{eq:delta:L}\begin{split}
  \delta &= \frac{a}{100}, \\
 L &= E + 3\vert\log a\vert,
\end{split}
  \end{equation}
where $E$ is a large enough constant.
 \end{minipage}
}

\begin{rem} The constant $E$ chosen above is independent of any of the parameters in the model, e.g. $T, J, a, \nu$. It is chosen so that $L$ satisfies the following: 
\begin{itemize}
\item $4d e^{-4\pi^2 L} \le \delta$ (see Lemma \ref{lem:range:conv}).
\item $L \ge \textnormal{D}+ 2|\log a|$, where $\textnormal{D}$ is the constant appearing in Lemma \ref{lem:vol:n}.
\item $e^{4\pi^2 L}\ge \frac{8C_0 d^{\frac32}}{\delta}$, where $C_0$ is the constant appearing in \eqref{eq:si-ti}.
\item $L$ is large enough so that the last inequality in \eqref{eq:r:ot} holds.
\end{itemize}
\end{rem}

Recall the stopping times $\tau_i$ defined in \eqref{eq:tau}. Let $T_0=0$ and define the sequence of stopping times
\begin{equation}\label{eq:t:s}\begin{split} 
S_{1} &= \left\{ t\ge T_0+L:\; \car\left(G_{t-T_0}*  \uv_0\right) \le \delta\right\}\\
T_1 &=\min\left\{\tau_j: \;\tau_j\ge S_1\right\},\\
%S_{1,2} &= \left\{ t\ge S_{1,1}+L: \car\left(G_{t-S_{1,1}}* \cn(T_0, S_{1,1})\right) \le \delta\right\}, \\
%S_{1,3} &= \left\{ t\ge S_{1,2}+L: \car\left(G_{t-S_{1,2}}* \cn(S_{1,1}, S_{1,2})\right) \le \delta\right\} \\
%& \vdots \qquad \vdots \qquad \vdots
%and define iteratively the sequence of times 
% \begin{align*}
 S_2 &= \inf\left\{t\ge T_1+L:\; \car\Big(G_{t-T_1}* \N(T_0, T_1)\Big) \le \delta\right\},  \\
 T_2 &=\min\left\{\tau_j: \;\tau_j\ge S_2\right\}, \\
 S_3 &= \inf\left\{t\ge T_2+L:\; \car\Big(G_{t-T_2}* \N(T_1,T_2)\Big) \le \delta\right\},  \\
 T_3 &=\min\left\{\tau_j:\; \tau_j\ge S_3\right\}, \\
 \vdots &\qquad \vdots \qquad \vdots
\end{split} \end{equation}
The reason for introducing the stopping times $T_i$ will be clear below. Inductively
\begin{align*}
\uv(T_i) & = G_{T_i-T_{i-1}}*\uv\big(T_{i-1}\big) +\N(T_{i-1}, T_i) \\
&= G_{T_i-T_{i-1}}*\left[ G_{T_{i-1} -T_{i-2}}*\uv\big(T_{i-2}\big) +\N(T_{i-2}, T_{i-1})\right] +\N(T_{i-1}, T_i)\\
&= G_{T_i-T_{i-2}} *\uv\big(T_{i-2}\big) +  G_{T_i-T_{i-1}}*\N(T_{i-2}, T_{i-1}) + \N(T_{i-1}, T_i) \\
& \vdots \qquad \vdots \qquad \vdots \\
&= G_{T_i-T_0}*\uv(T_0) +G_{T_i-T_1}*\N(T_0, T_1) +G_{T_i-T_2}*\N(T_1, T_2)+\cdots + \N(T_{i-1}, T_i)
\end{align*}
\begin{defn}For a $\R^d$ valued function $\mathbf f$ on $[0,1]$ we denote
\[ \mathscr S(a; \mathbf{f}) := \bigcup_{0\le y\le 1 } \left\{\mathbf{f}(y) +B(\mathbf{0}, a)\right\}\]
to be the sausage of radius $a$ around $\mathbf f$. 
\end{defn}
The following lemma is crucial for the upper bound. 
\begin{lem}\label{lem:u:disjoint} We have
\be\label{eq:r:u-n}
 \left|\car\big(\uv(T_i)\big) -\car\big(\N(T_{i-1}, T_i)\big)\right|  \le 2\delta.
\ee
and 
\be \label{eq:vol:u-n}
\left|\cs^1\left(a; T_i\right) \right| \ge \left|\cs\Big(a/2;\; \N\left(T_{i-1}, T_i\right)\Big) \right|.
\ee
\end{lem}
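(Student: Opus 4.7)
The plan is to expand $\uv(T_i)$ via the iterated mild formulation derived in the excerpt,
\[\uv(T_i) = \mathbf{h}_i + \N(T_{i-1}, T_i), \qquad \mathbf{h}_i := G_{T_i - T_0}*\uv(T_0) + \sum_{j=1}^{i-1} G_{T_i - T_j}*\N(T_{j-1}, T_j),\]
and reduce both claims to the single estimate $\car(\mathbf{h}_i) \le 2\delta$. Subadditivity of $\car$ applied to $\uv(T_i) = \mathbf{h}_i + \N(T_{i-1}, T_i)$ immediately gives $|\car(\uv(T_i)) - \car(\N(T_{i-1}, T_i))| \le \car(\mathbf{h}_i)$, which is \eqref{eq:r:u-n}. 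For \eqref{eq:vol:u-n}, set $\mathbf{e}(y) := \mathbf{h}_i(y) - \mathbf{h}_i(0)$, so $|\mathbf{e}(y)| \le \car(\mathbf{h}_i) \le 2\delta = a/50 \le a/2$ for every $y$. Then $\mathbf{e}(y) + B(\mathbf{0}, a) \supseteq B(\mathbf{0}, a - 2\delta) \supseteq B(\mathbf{0}, a/2)$, and hence for each $y \in [0,1]$,
\[\uv(T_i, y) + B(\mathbf{0}, a) = \mathbf{h}_i(0) + \mathbf{e}(y) + \N(T_{i-1}, T_i)(y) + B(\mathbf{0}, a) \supseteq \mathbf{h}_i(0) + \N(T_{i-1}, T_i)(y) + B(\mathbf{0}, a/2).\]
Taking the union over $y$ and using translation invariance of Lebesgue measure yields \eqref{eq:vol:u-n}.

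The core of the argument is establishing $\car(\mathbf{h}_i) \le 2\delta$, and I use two facts about the heat semigroup on the circle. First, $\car(G_t * \mathbf{f})$ is non-increasing in $t$: since $G_t$ is a convex average, $|(G_t * \mathbf{f})(x) - (G_t * \mathbf{f})(y)| \le \int G(t, z)|\mathbf{f}(x-z) - \mathbf{f}(y-z)|\,dz \le \car(\mathbf{f})$, and monotonicity then follows from the semigroup property. Second, Lemma \ref{lem:range:conv} gives the sharper bound $\car(G_s * \mathbf{g}) \le 4d e^{-4\pi^2 s}\,\car(\mathbf{g})$ for $s \ge 1$. The infimum defining $S_{j+1}$, continuity of $t \mapsto \car(G_t * \N(T_{j-1}, T_j))$, and $T_{j+1} \ge S_{j+1}$ together with monotonicity give $\car(G_{T_{j+1} - T_j} * \N(T_{j-1}, T_j)) \le \delta$. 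Factoring $G_{T_i - T_j} = G_{T_i - T_{j+1}} * G_{T_{j+1} - T_j}$ and applying Lemma \ref{lem:range:conv} with $t = T_i - T_{j+1} \ge (i - j - 1) L \ge 1$ for $j \le i - 2$ yields
\[\car\bigl(G_{T_i - T_j}*\N(T_{j-1}, T_j)\bigr) \le 4d e^{-4\pi^2 (i - j - 1) L}\,\delta \le \delta^{i - j},\]
using the built-in inequality $4d e^{-4\pi^2 L} \le \delta$ from the definition of $L$. The case $j = i - 1$ is the direct bound $\le \delta = \delta^{i-j}$, and the leading term $G_{T_i - T_0} * \uv(T_0)$ is handled analogously via $S_1, T_1$ to yield the bound $\delta^i$. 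Summing the geometric series,
\[\car(\mathbf{h}_i) \le \sum_{k=1}^{i} \delta^k \le \frac{\delta}{1 - \delta} \le 2\delta,\]
since $\delta = a/100 \le 1/100$.

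The main obstacle to watch is that $\mathbf{h}_i$ has $i$ summands, so a naive bound of $\delta$ per term would give the useless estimate $i\delta$. The resolution is that each older summand undergoes additional heat smoothing by at least $L$ units of time for every stopping-time generation it is pushed back through, and the defining inequality $4d e^{-4\pi^2 L} \le \delta$ converts each such generation into an extra factor of $\delta$; the spacing constraint $T_{j+1} - T_j \ge L$ built into the construction of $S_{j+1}$ is essential for this to close.
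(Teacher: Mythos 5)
Your proposal is correct and follows essentially the same route as the paper: the same decomposition $\uv(T_i)=\mathbf{h}_i+\N(T_{i-1},T_i)$ (the paper calls $\mathbf{h}_i$ by $\boldsymbol{\mathscr G}$), the same subadditivity fact for $\car$, the same geometric bound $\car(\mathbf{h}_i)\le\sum_{k\ge1}\delta^k\le 2\delta$, and the same ball-inclusion argument for the sausage volumes. The paper asserts the geometric bound with a one-line reference to the choice of $\delta,L$ and Lemma~\ref{lem:range:conv}; you have simply spelled out the factorization $G_{T_i-T_j}=G_{T_i-T_{j+1}}*G_{T_{j+1}-T_j}$, the monotonicity of $\car(G_t*\cdot)$, and the step $4de^{-4\pi^2(i-j-1)L}\le(4de^{-4\pi^2L})^{i-j-1}\le\delta^{i-j-1}$ that the paper leaves implicit.
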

\begin{proof}
Denote by 
\[ \boldsymbol{\mathscr G} :=G_{T_i-T_0}*\uv(T_0) +G_{T_i-T_1}*\N(T_0, T_1) +G_{T_i-T_2}*\N(T_1, T_2)+\cdots + G_{T_i-T_{i-1}}*\N(T_{i-2}, T_{i-1}).\]
It is easily checked that when $\mathbf{f}=\mathbf{g}+\mathbf{h}$ then $|\car(\mathbf{f})-\car(\mathbf{g})|\le \car(\mathbf{h})$. Therefore, with our choice of  $\delta$ and $L$, and by using Lemma \ref{lem:range:conv}, we obtain
\bes
 \left|\car\big(\uv(T_i)\big) -\car\big(\N(T_{i-1}, T_i)\big)\right|  \le \car(\boldsymbol{\mathscr G})\le  \sum_{i\ge 1} \delta^i  \le 2\delta.  
 \ees
%Then $\car(\boldsymbol{\mathscr G}) \le 2\delta$ and so
To prove \eqref{eq:vol:u-n} , note that
%\begin{align*}
%\left|\cs^1\left(a; T_i\right) \right| &\ge \left|\cs\Big(a/2;\; \N\left(T_{i-1}, T_i\right)\Big) \right|.
%\bigg| \left|\cs\Big(2a;\; \N\left(T_{i-1}, T_i\right)\Big) \right| - \left|\cs^1\left(a; T_i\right) \right| \bigg|  
%&\le \left|\cs\big(2a;\,\boldsymbol{\mathscr G}\big)\right|  \le \left|\cs\big(3a;\,\boldsymbol{\mathscr G}(0)\big)\right| \le C_0a^d,
 %\end{align*}
%Indeed, 
\[ \uv(T_i, x) = [\boldsymbol{\mathscr G}(0) + \N\left(T_{i-1}, T_{i}; x\right)]  +[\boldsymbol{\mathscr G}(x) - \boldsymbol{\mathscr G}(0)],\]
and so the ball of radius $a/2$ around $[\boldsymbol{\mathscr G}(0) + \N\left(T_{i-1}, T_{i}; x\right)] $ is contained in the ball of radius $a$ around $\uv(T_i, x)$ (note $\car(\boldsymbol{\mathscr{G}} )\le a/2$ by our choice of $\delta$). Consequently the sausage of radius $a/2$ around $\N\left(T_{i-1}, T_{i}\right)$ has a smaller volume than the sausage of radius $a$ around $\uv(T_i)$. 
\end{proof}
 
 \begin{rem} \label{rem:plan} Equations \eqref{eq:r:u-n} and \eqref{eq:vol:u-n} show that the range of $\uv(T_i)$ is close to that of $\N(T_{i-1}, T_i)$, and a lower bound on the volume of the sausage around $\uv(T_i)$ is given by the volume of a smaller sausage around $\N(T_{i-1}, T_i)$ . As we will see later $\N(T_{i-1}, T_i)$ form a weakly dependent sequence.  In particular, using a weak form of law of large numbers, we will see that there is a subset of $O(T^{\frac{d}{d+2}})$ many $T_i$'s where the volume of the sausage around $\N(T_{i-1}, T_i)$ is large and where the range of $\N(T_{i-1}, T_i)$ is small. Because of \eqref{eq:r:u-n} and \eqref{eq:vol:u-n} the same holds for $\uv(T_i)$ on this subset. The small range guarantees that the sausages at these $T_i$'s are disjoint, and thus a lower bound for $\mathscr S_T^1(a)$ is obtained by adding the volumes of the sausages at the $T_i$'s on this subset.
 \end{rem}

\subsection{Volume of the sausage around $\N(T_{i-1}, T_i)$ and Range of $\N(T_{i-1}, T_i)$}  \label{subsec:vsn}

Our first objective will be to give a lower bound on the probability that the volume of the sausage of radius $a/2$ around $\N(0,t)$ is at least $Ca^{d-2+\epsilon}$ (see Lemma \ref{lem:vol:n}). As indicated in Remark \ref{rem:plan}, we will show later that there are sufficiently many $i$ such that the sausages around $\N(T_{i-1}, T_i)$ have at least this volume. Let 
\be \label{eq:ntxy} \mathbf{N}(t;x,y) = \N(t,x)-\N(t,y),\ee
and write 
\[ \mathbf{N}(t;x,y) = \mathbf{N}^{(1)}(t;x,y) -\mathbf{N}^{(2)}(t;x,y),\]
where 
\begin{align*}
\mathbf{N}^{(1)}(t;x,y)  &= \int_{(-\infty,t]\times[0,1]} \left[G(t-s,x, z) -G(t-s, y,z) \right] \W(ds dz),  \text{ and } \\
\mathbf{N}^{(2)}(t,x,y)  &= \int_{(-\infty,0]\times[0,1]} \left[G(t-s,x, z) -G(t-s, y,z) \right] \W(ds dz)
\end{align*}
It is easy to see that  $\mathbf{N}^{(1)}(t;\cdot, \cdot)$ is a stationary process in $t$. The reason for considering the differences $\N(t,x) -\N(t,y)$ instead of $\N(t,x)$ is that the term 
\[ \int_{(-\infty,t]\times[0,1]} G(t-s, x,z) \W(dz ds) \]
would not be convergent. Note also that the volume of a sausage around $\N(0,t)$ is the same as the volume of the sausage around $\N(t;\cdot, 0)$. The lemma below shows that $\mathbf{N}^{(2)}$ becomes smaller with increasing $t$, and hence the main contribution of $\N(t,x,y)$ comes from $\N^{(1)}(t; x,y)$ for large $t$. The process $\N^{(1)}(t;\cdot, 0)$ behaves locally like Brownian motion (see Lemma \ref{lem:G:space}, and so we can use some techniques (see for example Lemma \ref{lem:mdim}) which work for Brownian motion, to obtain a lower bound on the sausage around $\N^{(1)}(t;\cdot, 0)$.

\begin{lem} \label{lem:u:incr} %There exists $C_1>0$ such that 
%\begin{align*}
%E\left[ \left|\mathscr U^{(2)}(t,x,y)\right|^2\right] & \le C_1e^{-t} |x-y|, \\
%E\left[ \left|\mathscr U^{(2)}(t,x,y)\right|^{2k}\right] & \le C_1^k (2\sqrt 2)^{2k} k^k e^{-kt}  |x-y|^k.  
  %\end{align*}
There exists a constant $C_1>0$ such that for any $\lambda>0$
 \be \label{eq:u2:tail} \mP_0\left(\sup_{x,y\in [0,1]} \left|\mathbf{N}^{(2)}(t;x,y)\right|>\lambda\right)  \le 2\exp\left(-\frac{e^t \lambda^2}{C_1}\right). \ee
\end{lem}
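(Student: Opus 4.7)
The plan is to reduce the two-parameter supremum to a one-parameter one, compute the covariance using the Fourier (eigenfunction) expansion of the heat kernel on the circle, and then apply Borell--TIS concentration together with a chaining bound on the expected supremum.

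\textbf{Step 1 (Reduction).} Write $\mathbf{N}^{(2)}(t;x,y) = \mathbf M(t,x) - \mathbf M(t,y)$, where
\[
\mathbf M(t,x) := \int_{(-\infty,0]\times[0,1]} G(t-s,x,z)\,\W(ds\,dz).
\]
Individually $\mathbf M(t,x)$ has infinite variance (from the $k=0$ Fourier mode), but subtracting the spatial average $\bar{\mathbf M}(t):=\int_0^1 \mathbf M(t,x')\,dx'$ removes this mode and gives a well-defined centered Gaussian process. Since $\mathbf M(t,x)-\mathbf M(t,y)$ is unchanged by this subtraction, we obtain the pointwise estimate
\[
\sup_{x,y\in[0,1]}\bigl|\mathbf N^{(2)}(t;x,y)\bigr|\;\le\;2\sup_{x\in[0,1]}\bigl|\mathbf M(t,x)-\bar{\mathbf M}(t)\bigr|,
\]
and it suffices to bound the tail of the one-parameter supremum on the right, coordinatewise.

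\textbf{Step 2 (Variance and increment bounds).} Using the Fourier expansion $G(u,x,z)-1=\sum_{k\ne 0} e^{-2\pi^2 k^2 u}e^{2\pi ik(x-z)}$ and Parseval in $z$, a direct computation gives for each coordinate $i$
\[
\operatorname{Var}\bigl(\mathbf M_i(t,x)-\bar{\mathbf M}_i(t)\bigr)
=\sum_{k\ne 0}\frac{e^{-4\pi^2 k^2 t}}{4\pi^2 k^2}\bigl(1-\text{cross terms}\bigr)\;\le\; C e^{-4\pi^2 t},
\]
and
\[
\operatorname{Var}\bigl(\mathbf M_i(t,x)-\mathbf M_i(t,x')\bigr)
=\sum_{k\ne 0}\frac{e^{-4\pi^2 k^2 t}}{4\pi^2 k^2}\bigl|e^{2\pi ikx}-e^{2\pi ikx'}\bigr|^2
\;\le\;C|x-x'|^2 e^{-4\pi^2 t},
\]
where in the second line we used $|e^{2\pi ikx}-e^{2\pi ikx'}|^2\le 4\pi^2 k^2|x-x'|^2$ and $\sum_{k\ne 0} e^{-4\pi^2 k^2 t}<\infty$. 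Thus the pseudometric on $[0,1]$ induced by the process is Lipschitz with constant $Ce^{-2\pi^2 t}$.

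\textbf{Step 3 (Expected supremum via chaining).} Since the process is $L^2$-Lipschitz with constant $Ce^{-2\pi^2 t}$ on the unit interval, Dudley's entropy bound (or an elementary Garsia--Rodemich--Rumsey/Kolmogorov chaining argument) yields
\[
\E_0\Bigl[\sup_{x\in[0,1]}\bigl|\mathbf M_i(t,x)-\bar{\mathbf M}_i(t)\bigr|\Bigr]\;\le\; C' e^{-2\pi^2 t}.
\]

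\textbf{Step 4 (Concentration and assembly).} Borell--TIS applied to each coordinate gives, for every $r>0$,
\[
\mP_0\Bigl(\sup_x \bigl|\mathbf M_i(t,x)-\bar{\mathbf M}_i(t)\bigr|>C'e^{-2\pi^2 t}+r\Bigr)\;\le\;2\exp\!\bigl(-r^2 e^{4\pi^2 t}/(2C)\bigr).
\]
Taking $r=\lambda/(4\sqrt d)-C'e^{-2\pi^2 t}$, using the componentwise union bound and $|\mathbf N^{(2)}|\le 2\sqrt d\max_i|\mathbf M_i-\bar{\mathbf M}_i|$, and absorbing the difference between $e^{4\pi^2 t}$ and $e^t$ into the final constant, we obtain \eqref{eq:u2:tail}. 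For small $\lambda$ where the linear term $C'e^{-2\pi^2 t}$ dominates, the right-hand side of \eqref{eq:u2:tail} already exceeds $1$ once $C_1$ is taken large enough, so the bound is trivial.

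\textbf{Main obstacle.} The substantive work is Step~2: identifying the correct reduction to a mean-zero process so that the Parseval computation converges, and verifying that the $k=0$ mode drops out of both the variance and the increment variance. The small-$\lambda$ regime (where $\E[\sup]$ is comparable to or larger than $\lambda$) is handled cosmetically by tuning $C_1$, but it is the only place the proof requires a small bit of care beyond standard Gaussian bookkeeping.
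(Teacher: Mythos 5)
Your proof is correct in substance but takes a genuinely different route from the paper. The paper works directly with the well-defined difference $\mathbf N^{(2)}(t;x,y)$, computes its second moment via Parseval, extracts $L^{2k}$ moment bounds from the Burkholder--Davis--Gundy inequality, runs the $L^p$-chaining argument of \cite{conu-jose-khos} to bound $\E_0\bigl[\sup_{x,y}|\mathbf N^{(2)}(t;x,y)|^{2k}\bigr]$ for all $k$, and finally converts to an exponential-moment and thus a sub-Gaussian tail. You instead reduce to a one-parameter supremum by subtracting the (ill-defined, but cancelling) spatial mean to obtain $\mathbf M(t,x)-\bar{\mathbf M}(t)$, compute pointwise and increment variances by Parseval, and then invoke the classical Gaussian machinery (Dudley entropy bound for $\E\sup$, then Borell--TIS for concentration). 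Both are standard; the paper's route is the one most common in the SPDE literature and does not need the mean-subtraction trick, while yours is shorter once Dudley and Borell--TIS are available and makes the Gaussian structure explicit. Two small points to tidy up: in Step 2 the expression ``$1-\text{cross terms}$'' is spurious --- by Parseval one has exactly $\operatorname{Var}\bigl(\mathbf M_i(t,x)-\bar{\mathbf M}_i(t)\bigr)=\sum_{k\ne 0}\frac{e^{-4\pi^2 k^2 t}}{4\pi^2 k^2}$ with no $x$-dependence and no correction --- and the displayed bounds $\sum_{k\ne 0}e^{-4\pi^2 k^2 t}\le C e^{-4\pi^2 t}$ and $\operatorname{Var}(\mathbf M_i(t,x)-\mathbf M_i(t,x'))\le C|x-x'|^2 e^{-4\pi^2 t}$ require $t$ bounded away from $0$ (for $t\downarrow 0$ the left sides blow up like $t^{-1/2}$); this is harmless since the lemma is only invoked for $t\ge L$, and the paper's own constants carry the same implicit restriction, but it is worth stating.
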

\begin{proof}
We first observe 
\begin{align*}
\E_0\left[ \left|\mathbf{N}^{(2)}(t;x,y)\right|^2\right] 
&= \int_{-\infty}^0 \int_0^1 \left[G(t-s,x, z) -G(t-s, y,z) \right]^2dz ds \\ &= \int_{-\infty}^0 ds \sum_{k\ge 1} e^{-k^2(t-s)}  \left|1- e^{i\cdot 2\pi k(x-y)}\right|^2\\
& \le \sum_{k\ge 1} \frac{e^{-k^2 t}}{k^2} \left|1\wedge (k|x-y|)\right|^2 \\
&\le Ce^{-t} |x-y|.
\end{align*}
An application of the Burkholder-Davis-Gundy inequality then gives us 
\[ \E_0\left[ \left|\mathbf{N}^{(2)}(t;x,y)\right|^{2k}\right]  \le C^k (2\sqrt 2)^{2k} k^k e^{-kt}  |x-y|^k\]
for all positive integers $k\ge 1$.
%\[ \frac{\|M_t\|_{p}}{\sqrt{\|\langle M\rangle_t\|_{p/2}}} \le 2\sqrt{p} \]
From the argument leading up to inequality (6.9) in \cite{conu-jose-khos} one then obtains
\[ \E_0\left[\sup_{x,y \in [0,1]}\left|\mathbf{N}^{(2)} (t;x,y)\right|^{2k}\right] \le C^ke^{-kt}k^k,\quad k\ge 2. \]
 Therefore there exists a constant $C_1>0$ such that
 \[ \E_0\left[\sup_{x,y \in [0,1]}\exp \left(\frac{\exp(t) \cdot |\mathbf{N}^{(2)}(t;x,y)|^2}{C_1}\right)\right]  \le 2.\] 
 The inequality \eqref{eq:u2:tail} follows immediately from this. 
\end{proof}
The following lemma indicates that the process $\N^{(1)}(t;\cdot, 0)$ behaves locally like Brownian motion. It will be used in Lemma \ref{lem:mdim} below.
\begin{lem} \label{lem:G:space} There are constants $C_1, C_2>0$ such that for all $x, y \in [0,1]$ and all $t\ge 1$ one has
\[ C_1 d(x,y) \le \int_0^t \int_0^1 \left[G(s, x,z) -G(s,y,z)\right]^2 dz ds \le C_2d(x,y),\]
where $d(x,y)$ is the distance between $x$ and $y$ on the torus $\mathbb T=[0,1)$.
\end{lem}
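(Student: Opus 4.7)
The plan is to compute the double integral essentially explicitly via the Fourier expansion of the heat kernel on the circle, and then extract the desired two-sided bound. The heat kernel of $\frac{1}{2}\partial_x^2$ on $[0,1]$ with periodic boundary conditions has the Fourier representation
\[ G(s,x,z) = \sum_{k \in \Z} e^{-2\pi^2 k^2 s}\, e^{2\pi i k(x-z)}, \]
so that
\[ G(s,x,z) - G(s,y,z) = \sum_{k\in\Z} e^{-2\pi^2 k^2 s}\bigl[e^{2\pi i k x} - e^{2\pi i k y}\bigr] e^{-2\pi i k z}. \]
The $k=0$ term vanishes, which is what makes the double integral finite for $t\ge 1$. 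Applying Parseval's identity in the variable $z$ gives
\[ \int_0^1 \bigl[G(s,x,z) - G(s,y,z)\bigr]^2\, dz = \sum_{k\ne 0} e^{-4\pi^2 k^2 s}\,\bigl|e^{2\pi i k x} - e^{2\pi i k y}\bigr|^2 = 4\sum_{k\ne 0} e^{-4\pi^2 k^2 s}\sin^2(\pi k\, d(x,y)), \]
where the last equality uses $|e^{i\alpha}-e^{i\beta}|^2 = 4\sin^2((\alpha-\beta)/2)$ together with the fact that $\sin^2(\pi k \cdot)$ depends only on the argument modulo $1$.

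Next I would integrate this expression in $s$ from $0$ to $t$. Since $\int_0^t e^{-4\pi^2 k^2 s}\, ds = (1-e^{-4\pi^2 k^2 t})/(4\pi^2 k^2)$, we obtain
\[ \int_0^t \int_0^1 \bigl[G(s,x,z) - G(s,y,z)\bigr]^2\, dz\, ds = \sum_{k\ne 0} \frac{1-e^{-4\pi^2 k^2 t}}{\pi^2 k^2}\, \sin^2(\pi k\, d(x,y)). \]
For $t \ge 1$ the prefactor satisfies the uniform bound
\[ 1 - e^{-4\pi^2} \le 1 - e^{-4\pi^2 k^2 t} \le 1 \qquad \text{for every } k \ne 0, \]
so the right-hand side is sandwiched between positive constant multiples of $\sum_{k\ne 0} \sin^2(\pi k\, d(x,y))/(\pi^2 k^2)$.

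Finally I would invoke the classical Fourier identity $\sum_{k\ge 1} \sin^2(\pi k r)/k^2 = \pi^2 r(1-r)/2$ for $r \in [0,1]$ (proved by computing the Fourier coefficients of the tent function), which gives
\[ \sum_{k\ne 0} \frac{\sin^2(\pi k\, d(x,y))}{\pi^2 k^2} = \frac{2}{\pi^2}\cdot \frac{\pi^2 d(x,y)(1 - d(x,y))}{2} = d(x,y)\bigl(1 - d(x,y)\bigr). \]
On the torus $d(x,y) \in [0, 1/2]$, so $1 - d(x,y) \in [1/2, 1]$ and therefore $d(x,y)(1 - d(x,y))$ is comparable to $d(x,y)$ with absolute constants. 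Combining the two comparisons yields the claimed two-sided bound. There is no real obstacle in the argument; the only subtlety worth noting is that the hypothesis $t \ge 1$ is used exactly once, to secure the uniform positive lower bound on $1 - e^{-4\pi^2 k^2 t}$ across all nonzero $k$ (for small $t$ the lower bound degenerates when $k$ is small).
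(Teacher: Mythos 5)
Your proof is correct and follows the same general Fourier/Parseval route as the paper, but it diverges in the final estimation step in a way worth noting. After reducing the integral (exactly as the paper does) to the series
\[
\sum_{k\ne 0}\frac{1-e^{-4\pi^2 k^2 t}}{\pi^2 k^2}\,\sin^2\bigl(\pi k\, d(x,y)\bigr),
\]
the paper controls this series by the elementary inequalities $|1-e^{iz}|\le 1\wedge|z|$ and $|1-e^{iz}|\ge C|z|$ for $|z|\le\pi$, with a truncation of the sum at $k\le d(x,y)^{-1}$ for the upper bound and at $k\le \tfrac{1}{2}d(x,y)^{-1}$ for the lower bound. You instead bound the prefactor $1-e^{-4\pi^2 k^2 t}$ uniformly between $1-e^{-4\pi^2}$ and $1$ (which is where $t\ge 1$ enters, just as in your note) and then evaluate the remaining series in closed form using the Bernoulli-polynomial identity $\sum_{k\ge 1}\sin^2(\pi k r)/k^2 = \tfrac{\pi^2}{2}\,r(1-r)$, concluding with $r(1-r)\asymp r$ on $[0,1/2]$. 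The paper's approach buys robustness: it works when an exact Fourier identity is unavailable, and the same splitting template recurs elsewhere in the paper (Lemma \ref{lem:u:incr}, Proposition \ref{prop:n:chain}). Your approach buys an exact evaluation and thus cleaner constants, but relies on a classical identity specific to this kernel. Both correctly exploit that $\sin^2(\pi k(x-y))$ depends only on $d(x,y)$ and that the hypothesis $t\ge 1$ is needed only to keep the low-frequency prefactors bounded away from zero.
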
 
\begin{proof} Using the Fourier decomposition of $G(s,x)$ we obtain %Clearly it is enough to show the above for all $x, y$ such that $|x-y|\le \frac{1}{100 \pi}$. We use 
\begin{equation}\begin{split}\label{eq:parseval}
 &\int_0^t ds  \int_0^1 dz \left[G(s, x,z) -G(s,y,z)\right]^2  \\
&= C\int_0^t ds \sum_{k\ge 1} \exp\left(-(2\pi k)^2 s\right)\big|1-\exp\left[i(2\pi k) d(x,y)\right]\big|^2  \\
&= C\sum_{k \ge 1} \frac{\left[1- \exp(-(2\pi k)^2 t)\right]}{k^2} \big|1-\exp\left[i(2\pi k) d(x,y)\right] \big|^2.
\end{split}\end{equation}
Now use $|1-e^{iz}|\le 1\wedge |z|$ and $|1-e^{-4\pi^2 k^2 t} |\le 1$ to obtain that the above is less than
\[
 C \sum_{k\ge 1} \frac{1}{ k^2} \big|1\wedge [kd(x,y)]\big|   \le Cd(x,y).\]
The final inequality is obtained by splitting the sum according to whether $k\le d(x,y)^{-1}$ or $k>d(x,y)^{-1}$.

Next we turn to the lower bound. For this  we only consider the sum in \eqref{eq:parseval} for $1\le k \le \frac{1}{2d(x,y)}$. Now $|1-e^{iz}|\ge  C|z|$ for all $z\in [0,\pi]$ and some $C>0$. Therefore since $t\ge 1$
\[ \int_0^t ds  \int_0^1 dz \left[G(s, x,z) -G(s,y,z)\right]^2 \ge C\sum_{k=1}^{[2d(x,y)]^{-1}} \frac{k^2d(x,y)^2}{k^2} \ge Cd(x,y).\]
This completes the proof of the lemma. 
\end{proof}

Before proceeding we recall
\begin{defn} The lower Minkowski dimension of a set $A$ is 
\[\underline{\textnormal{dim}}_M(A)  :=\liminf_{\epsilon\to 0} \frac{\log N_{\epsilon}(A)}{\log(\epsilon^{-1})},\]
where $N_{\epsilon}(A)$ is the minimum number of balls of radius $\epsilon$ needed to cover $A$. 
\end{defn}
We will need
\begin{lem} \label{lem:mdim} For $t\ge 1$
\[ \underline{\textnormal{dim}}_M \left[\textnormal{Range}\big(\mathbf{N}^{(1)}(t; \cdot,0)\big)\right] \ge 2 \;\; \text{a.s.}\]
\end{lem}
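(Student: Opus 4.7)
The strategy is to use the classical energy/capacity method. The lower Minkowski dimension is at least the Hausdorff dimension, so it suffices to show $\dim_H\bigl(\car(\mathbf{N}^{(1)}(t;\cdot,0))\bigr)\ge 2$ a.s., and by Frostman's lemma this will follow if, for each $\eta<2$, we exhibit a (random) Borel probability measure supported on the range whose $\eta$-energy is a.s.\ finite. The natural choice is $\mu$, the push-forward of Lebesgue measure on $[0,1]$ under $x\mapsto \mathbf{N}^{(1)}(t;x,0)$, which is well defined after choosing a measurable modification (guaranteed by Kolmogorov's criterion together with the Gaussian increment estimate below).

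Using Tonelli and the identity $\mathbf{N}^{(1)}(t;x,0)-\mathbf{N}^{(1)}(t;y,0)=\mathbf{N}^{(1)}(t;x,y)$, I would compute
\begin{equation*}
\mathbb{E}\!\left[\int\!\!\int \frac{\mu(dz_1)\mu(dz_2)}{|z_1-z_2|^{\eta}}\right]
=\int_{0}^{1}\!\!\int_{0}^{1} \mathbb{E}\bigl|\mathbf{N}^{(1)}(t;x,y)\bigr|^{-\eta}\,dx\,dy.
\end{equation*}
Since each of the $d$ components of $\mathbf{N}^{(1)}(t;x,y)$ is an independent centered Gaussian with variance
\begin{equation*}
\sigma_{x,y}^2 \;=\; \int_{0}^{\infty}\!\!\int_{0}^{1} \bigl[G(s,x,z)-G(s,y,z)\bigr]^2\,dz\,ds,
\end{equation*}
a scaling calculation yields $\mathbb{E}|\mathbf{N}^{(1)}(t;x,y)|^{-\eta}=C_{d,\eta}\,\sigma_{x,y}^{-\eta}$, and the Gaussian moment $\mathbb{E}|Z|^{-\eta}$ (for $Z$ standard in $\mathbb{R}^d$) is finite exactly when $\eta<d$, which is fine since $d\ge 2$ and $\eta<2$.

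The next step is to control $\sigma_{x,y}^2$ from below in terms of $d(x,y)$. Since $t\ge 1$, one has $\sigma_{x,y}^2\ge \int_{0}^{1}\!\!\int_{0}^{1}[G(s,x,z)-G(s,y,z)]^2\,dz\,ds\ge C_1\,d(x,y)$ by the lower bound in Lemma \ref{lem:G:space}. Hence
\begin{equation*}
\mathbb{E}\!\left[\int\!\!\int \frac{\mu(dz_1)\mu(dz_2)}{|z_1-z_2|^{\eta}}\right]
\le C\int_{0}^{1}\!\!\int_{0}^{1} \frac{dx\,dy}{d(x,y)^{\eta/2}},
\end{equation*}
which is finite whenever $\eta/2<1$, i.e.\ $\eta<2$. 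Therefore the $\eta$-energy of $\mu$ is finite almost surely, so by Frostman's lemma $\dim_H\bigl(\mathrm{supp}(\mu)\bigr)\ge \eta$, and since $\mathrm{supp}(\mu)\subset \overline{\car(\mathbf{N}^{(1)}(t;\cdot,0))}$ (with Hausdorff dimension unaffected by closure), $\dim_H(\car(\mathbf{N}^{(1)}(t;\cdot,0)))\ge \eta$ a.s. Intersecting over a countable sequence $\eta_n\uparrow 2$ gives $\dim_H\ge 2$, and the inequality $\dim_H\le \underline{\dim}_M$ closes the argument.

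The only real obstacle is the routine but slightly delicate verification that the two-parameter process $x\mapsto \mathbf{N}^{(1)}(t;x,0)$ admits a measurable (in fact continuous) version, so that the push-forward $\mu$ genuinely lives on the range; this follows from Kolmogorov–Chentsov applied to the Gaussian increment estimate $\mathbb{E}|\mathbf{N}^{(1)}(t;x,y)|^{2k}\le C_k\,d(x,y)^k$ (which in turn follows from Lemma \ref{lem:G:space} together with Burkholder–Davis–Gundy, exactly as in the proof of Lemma \ref{lem:u:incr}). Everything else is a pure consequence of the fact that, thanks to Lemma \ref{lem:G:space}, the process $\mathbf{N}^{(1)}(t;\cdot,0)$ has the same quadratic-increment scaling as Brownian motion, for which the range-dimension statement is classical.
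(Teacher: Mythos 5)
Your proposal is correct and takes essentially the same route as the paper: both reduce to a Hausdorff-dimension bound via $\underline{\dim}_M\ge\dim_H$, push Lebesgue measure forward to the occupation measure on the range, and use the lower bound $\sigma_{x,y}^2\ge C_1 d(x,y)$ from Lemma~\ref{lem:G:space} to show the expected $\alpha$-energy is finite for every $\alpha<2$, invoking the energy/Frostman criterion (Theorem~4.27 of M\"orters--Peres in the paper). The only addition on your side is the explicit check that $\mathbb{E}|Z|^{-\eta}<\infty$ for $\eta<d$ and the remark on the continuous modification, both of which the paper leaves implicit.
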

\begin{proof} We first recall that for any set $A$ we have 
\[  \underline{\textnormal{dim}}_M(A)\ge {\textnormal{dim}}_H(A), \]
where ${\textnormal{dim}}_H$ is the Hausdorff dimension (see page 115 of \cite{mort-pere}). We use the {\it energy method} (Theorem 4.27 in \cite{mort-pere}) to get a lower bound on the Hausdorff dimension of the range. Let $\mu_t$ be the occupation measure of $\mathbf{N}^{(1)}(t; \cdot,0)$:
\[ \int_{\R^d} f(\mathbf{x}) d\mu_t(\mathbf{x}) = \int_0^1 f\left(\mathbf{N}^{(1)}(t; x,0)\right)dx.\]
We just need to show that for any $0<\alpha<2$
\[ \E_0 \int_{\R^d}\int_{\R^d}\frac{d\mu_t(\mathbf{x}) d\mu_t(\mathbf{y})}{|\mathbf{x}-\mathbf{y}|^\alpha} =\E_0\int_0^1\int_0^1 \frac{dxdy}{|\mathbf{N}^{(1)}(t; x, 0)-\mathbf{N}^{(1)}(t; y, 0)|^\alpha}<\infty.\]
 Now $\mathbf{N}^{(1)}(t, x, 0)-\mathbf{N}^{(1)}(t, y, 0)$ is a Gaussian random variable with mean $0$ and variance
 \[ \int_0^{\infty} \int_0^1 \left[G(s, x,z) -G(s, y,z)\right]^2 dz ds .\]
 From Lemma \ref{lem:G:space} this expression is bounded above and below by a constant multiple of $d(x,y)$. Therefore it follows that 
 \[ \E_0\int_0^1\int_0^1 \frac{dxdy}{|\mathbf{N}^{(1)}(t; x, 0)-\mathbf{N}^{(1)}(t; x, 0)|^\alpha} \le C\int_0^1\int_0^1 \frac{dxdy}{d(x,y)^{\alpha/2}} <\infty,\] 
 as required. Consequently
 \[\int_{\R^d}\int_{\R^d}\frac{d\mu_t(\mathbf{x}) d\mu_t(\mathbf{y})}{|\mathbf{x}-\mathbf{y}|^\alpha} <\infty \quad \text{a.s.} \] 
 It thus follows from Theorem 4.27 in \cite{mort-pere} that $\underline{\textnormal{dim}}_M\left[ \textnormal{Range}\big(\mathbf{N}^{(1)}(t; \cdot,0)\big)\right] \ge 2  \; \text{a.s.}$ as required.
\end{proof}
%For a function $f:[0,1]\to \R^d$ denote by $\mathscr S(f;\epsilon)$ the sausage of radius $\epsilon$ around the function $f$. 
The lower bound in the lower Minkowski dimension gives a lower bound on the number of balls of radius $a$ which intersect the range of $\N^{(1)}(t;\cdot,0)$, and consequently a lower bound on the volume of the sausage of radius $a$ around $\N^{(1)}(t;\cdot, 0)$. 
\begin{lem} \label{lem:vol:u1} Fix an arbitrary $0<\gamma<1$. There exists a $\tilde{\mathcal C}_{\gamma}>0$ such that for any $t\in \R$ and any $0<a \le 1$
\[ \mP_0\left(\vert \mathscr S\left(a;\;\mathbf{N}^{(1)} (t; \cdot, 0)\right)\vert \ge \tilde{\mathcal C}_{\gamma} a^{d-2+\gamma}\right) \ge \frac{4}{5}.\] 
\end{lem}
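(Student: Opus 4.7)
The plan is to leverage the Minkowski dimension estimate from Lemma \ref{lem:mdim}, converting it through a standard covering-to-packing argument into a volume lower bound for the sausage. The main technical care is to pass from the almost-sure, asymptotic statement $\underline{\textnormal{dim}}_M\ge 2$ to a uniform-in-$a$ bound holding with probability at least $4/5$.

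Fix $\gamma\in(0,1)$. By Lemma \ref{lem:mdim} one has $\underline{\textnormal{dim}}_M\bigl[\textnormal{Range}(\N^{(1)}(t;\cdot,0))\bigr]\ge 2$ almost surely. Unpacking the definition of lower Minkowski dimension with $\alpha=2-\gamma<2$, this yields an almost surely positive random variable $\epsilon_0>0$ such that
\[ N_\epsilon\bigl(\textnormal{Range}(\N^{(1)}(t;\cdot,0))\bigr)\;\ge\;\epsilon^{-(2-\gamma)}\qquad\text{for all }0<\epsilon\le\epsilon_0.\]
Since $\epsilon_0>0$ almost surely, continuity of probability measure produces a deterministic $\delta_0>0$ with $\mP_0(\epsilon_0\ge\delta_0)\ge 4/5$. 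Call this event $\mathcal E$; on $\mathcal E$, the covering lower bound is valid for every $\epsilon\le\delta_0$.

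Next I would pass from covering to packing and then to sausage volume. On $\mathcal E$, for any $a$ with $2a\le\delta_0$, let $P$ be a maximal $2a$-separated subset of the range; by the standard covering--packing inequality $|P|\ge N_{2a}(\textnormal{Range})\ge(2a)^{-(2-\gamma)}$. The $a$-sausage $\mathscr S(a;\N^{(1)}(t;\cdot,0))$ then contains $|P|$ pairwise disjoint balls of radius $a$, giving
\[\bigl|\mathscr S\bigl(a;\N^{(1)}(t;\cdot,0)\bigr)\bigr|\;\ge\;c_d\,|P|\,a^d\;\ge\;c_d\,2^{-(2-\gamma)}\,a^{d-2+\gamma}.\]
For the remaining regime $a\in(\delta_0/2,1]$ I would instead use the trivial observation that $\N^{(1)}(t;0,0)=0$ belongs to the range, so the sausage contains $B(\mathbf 0,a)$ and hence has volume at least $c_d a^d\ge c_d(\delta_0/2)^d\ge c_d(\delta_0/2)^d\,a^{d-2+\gamma}$, where the last inequality uses $d-2+\gamma\ge 0$ (since $d\ge 2$) together with $a\le 1$. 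Setting $\tilde{\mathcal C}_\gamma:=\min\bigl(c_d 2^{-(2-\gamma)},\;c_d(\delta_0/2)^d\bigr)$ packages both regimes on the single event $\mathcal E$.

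The main obstacle is the uniformity step: the Minkowski estimate is only asymptotic with a random threshold $\epsilon_0$, so one must invoke continuity of measure to promote it to a deterministic $\delta_0$ that works with probability at least $4/5$. The choice of exponent $2-\gamma$ (strictly less than $2$) is what buys a quantitative covering lower bound for all $\epsilon\le\delta_0$; the ``large $a$'' regime is then handled separately and essentially for free, using only that the origin lies in the range of $\N^{(1)}(t;\cdot,0)$.
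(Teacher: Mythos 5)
Your proposal is correct and follows essentially the same route as the paper: invoke Lemma \ref{lem:mdim}, turn the almost-sure Minkowski-dimension bound into a uniform-in-$a$ lower bound on the number of well-separated points in the range (cubes in the paper, a maximal $2a$-separated set for you), convert that to a volume bound on the $a$-sausage, and promote the almost-sure statement to a probability-$4/5$ statement by choosing a deterministic threshold. The only cosmetic difference is that the paper packages both the small-$a$ and large-$a$ regimes into a single random constant $A(\omega)$, whereas you split at $a=\delta_0/2$ and dispose of the large-$a$ case with the trivial observation that $\mathbf 0=\mathbf N^{(1)}(t;0,0)$ lies in the range; you should also note, as the paper does, that stationarity of $\mathbf N^{(1)}(t;\cdot,0)$ in $t$ is what lets the resulting constant be chosen independently of $t$.
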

\begin{proof} First note that the process $\mathbf{N}^{(1)} (t; \cdot, 0)$ is stationary in $t$. Partition $\R^d$ into cubes of side length $a$. Let $\tilde N_a(\mathbf{N}^{(1)}, t)$ be the number of cubes through which  $\mathbf{N}^{(1)}(t;\cdot,0)$ passes. By Lemma \ref{lem:mdim} we obtain almost surely
\[ \tilde N_a(\mathbf{N}^{(1)}, t) \ge \left(\frac 1a\right)^{2-\gamma} \text{ for all $a$ small enough}. \]
Therefore there exists a positive random variable $A(\omega)$ which is finite almost surely such that 
\[ \tilde N_a(\mathbf{N}^{(1)}, t) \ge A(\omega)\left(\frac 1a\right)^{2-\gamma} \text{ for all }0<a\le 1 . \]
We now choose the largest possible subcollection of these $\tilde N_a(\mathbf{N}^{(1)}, t)$ cubes such that no two cubes are adjacent (that is, share a common edge). Let $ N_a^{*}(\mathbf{N}^{(1)}, t)$ be the number of cubes in this subcollection. Clearly there exists a constant $C_d$ such that almost surely 
  \[  N_a^{*}(\mathbf{N}^{(1)}, t)\ge A(\omega)C_d\left(\frac 1a\right)^{2-\gamma} \text{ for all } 0<a\le 1. \]
 From each of these $N_a^{*}(\mathbf{N}^{(1)}, t)$ cubes choose any point in the range of $\mathbf{N}^{(1)}(t,\cdot,0)$. The union of the balls of radius $a$ around these  points is contained in the sausage of radius $a$ around $\mathbf{N}^{(1)} (t, \cdot, 0)$, so that
 \[ \vert \mathscr S\left(a;\;\mathbf{N}^{(1)} (t, \cdot, 0)\right)\vert \ge A(\omega)\tilde C_d a^{d-2+\gamma} \quad \text{ for all } 0<a\le 1,\]
 for some other constant $\tilde C_d>0$.
 The constant $\mathcal C_{\gamma}>0$ is chosen so that
 \[\mP_0\left(A(\omega) \tilde C_d \ge \tilde{\mathcal C}_{\gamma} \right) \ge \frac 45.\]
 This completes the proof of the lemma. 
 \end{proof}
%
%We introduce the notation
%\[ \mathcal N (s,t; x):= \int_s^t\int_0^1 G_{t-r}(x,y) W(dy dr),\]
%which represents the noise term from time $s$ to $t$. Then $\cn (s,t)$ will represent the function from $[0,1]$ to $\R$. 
%
%
We now use Lemma \ref{lem:u:incr} on the smallness of $\N^{(2)}(t;\cdot, 0)$ to control the volume of the sausage around $\N(0,t)$.
\begin{lem}\label{lem:vol:n} Fix an arbitrary $0<\gamma<1$.  There are constants ${\mathcal C}_{\gamma} >0$ and $\textnormal{D}>0$ such that for any $0<a\le 1$ and $t\ge \textnormal{D} +2|\log a|$
\[  \mP_0\left(\vert \mathscr S\left(\frac{a}{2};\;\N (0,t)\right)\vert \ge {\mathcal C}_{\gamma} a^{d-2+\gamma}\right) \ge \frac{3}{4}.\]
\end{lem}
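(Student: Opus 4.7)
The plan is to reduce the desired bound to an application of Lemma~\ref{lem:vol:u1}, by replacing $\N(0,t)$ with the stationary process $\mathbf{N}^{(1)}(t;\cdot,0)$ and absorbing the remainder $\mathbf{N}^{(2)}(t;\cdot,0)$ using its exponential tail bound from Lemma~\ref{lem:u:incr}. First I would recall the observation made just before Lemma~\ref{lem:u:incr}: since $\mathbf{N}(t;x,0)=\N(t,x)-\N(t,0)$ is a rigid translate of $\N(0,t;\cdot)$, the two sausages have the same volume, so it suffices to bound $|\mathscr S(a/2;\,\mathbf{N}(t;\cdot,0))|$ from below. I then use the decomposition $\mathbf{N}(t;\cdot,0)=\mathbf{N}^{(1)}(t;\cdot,0)-\mathbf{N}^{(2)}(t;\cdot,0)$.

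Next, applying Lemma~\ref{lem:vol:u1} with $a/4$ in place of $a$ (permissible since $a/4\le 1$) yields
\[\mP_0\Big(\big|\mathscr S\bigl(a/4;\,\mathbf{N}^{(1)}(t;\cdot,0)\bigr)\big|\ge \tilde{\mathcal C}_{\gamma}(a/4)^{d-2+\gamma}\Big)\ge \tfrac{4}{5}.\]
In parallel, Lemma~\ref{lem:u:incr} applied with $\lambda=a/4$ gives
\[\mP_0\Big(\sup_{x\in[0,1]}|\mathbf{N}^{(2)}(t;x,0)|>a/4\Big)\le 2\exp\!\Big(-\frac{e^t a^2}{16\,C_1}\Big),\]
and the right-hand side is at most $1/20$ provided $t\ge \textnormal{D}+2|\log a|$ with $\textnormal{D}:=\log(640\,C_1)$; this is exactly how the constant $\textnormal{D}$ appearing in the statement will be fixed.

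On the intersection of these two events, which has $\mP_0$-probability at least $4/5-1/20=3/4$, for every $x\in[0,1]$ one has $|\mathbf{N}^{(2)}(t;x,0)|\le a/4$, so any point $z$ within $a/4$ of $\mathbf{N}^{(1)}(t;x,0)$ lies within $a/2$ of $\mathbf{N}(t;x,0)$ by the triangle inequality. Hence $B(\mathbf{N}^{(1)}(t;x,0),\,a/4)\subset B(\mathbf{N}(t;x,0),\,a/2)$ for every $x$, and taking unions produces the set inclusion $\mathscr S(a/4;\,\mathbf{N}^{(1)}(t;\cdot,0))\subset \mathscr S(a/2;\,\mathbf{N}(t;\cdot,0))$. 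Comparing volumes and setting $\mathcal C_{\gamma}:=\tilde{\mathcal C}_{\gamma}\,4^{-(d-2+\gamma)}$ yields the required bound.

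No step here is genuinely difficult; the argument is a carefully quantified perturbation of Lemma~\ref{lem:vol:u1}. The only point that requires real attention is the arithmetic of probabilities: the failure probability of Lemma~\ref{lem:vol:u1} plus that of the $\mathbf{N}^{(2)}$ tail bound must be at most $1/4$, which the split $1/5+1/20$ achieves exactly, and this is precisely what pins down the threshold $\textnormal{D}+2|\log a|$.
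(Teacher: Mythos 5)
Your proof is correct and follows essentially the same route as the paper: decompose $\mathbf{N}(t;\cdot,0)=\mathbf{N}^{(1)}(t;\cdot,0)-\mathbf{N}^{(2)}(t;\cdot,0)$, use Lemma~\ref{lem:u:incr} with $\lambda=a/4$ to make $\mathbf{N}^{(2)}$ small with probability $\ge 19/20$, deduce the sausage inclusion $\mathscr S(a/4;\,\mathbf{N}^{(1)})\subset\mathscr S(a/2;\,\mathbf{N})$, and invoke Lemma~\ref{lem:vol:u1} at radius $a/4$ to conclude with probability $\ge 4/5-1/20=3/4$. The only thing you make more explicit than the paper is the rescaling $\mathcal C_\gamma=\tilde{\mathcal C}_\gamma 4^{-(d-2+\gamma)}$ and the concrete choice of $\textnormal{D}$, which is a harmless and accurate elaboration.
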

\begin{proof} We shall use \eqref{eq:u2:tail}.
\[ \mP_0\left(\sup_{x,y\in [0,1]} \left|\mathbf{N}^{(2)}(t;x,y)\right|>\frac{a}{4}\right)  \le 2\exp\left(-\frac{e^t a^2}{16 C_1}\right). \]
If we choose $\textnormal{D}$ large enough the right hand side above is at most $\frac{1}{20}$. Therefore, 
\[\mP_0\left(\car\left(\mathbf{N}^{(2)}(t;\cdot,0)\right)\le \frac{a}{4} \right) \ge \frac{19}{20}. \]
As a consequence of this we obtain by a similar argument as in Lemma \ref{lem:u:disjoint} that 
\[\big\vert \mathscr{S}\left(\frac{a}{2};\; \N(0,t)\right)\big\vert \ge  \big\vert \mathscr{S}\left(\frac{a}{4};\; \N^{(1)}(t; \cdot, 0)\right)\big\vert. \]
%and 
%\[\Big\vert\big\vert \mathscr{S}\left(\mathcal N(0,t);\epsilon\right)\big\vert -\big\vert\mathscr S\left(\mathscr{U}^{(1)}(t,\cdot,0);\epsilon\right)\big\vert \Big\vert\le \left\vert \mathscr S\left(\mathscr{U}^{(2)}(t,\frac{1}{2},0); 2\epsilon\right)\right\vert  \le  C_5\epsilon^{d-1}\]
%for some constant $C_5$. 
We now use Lemma \ref{lem:vol:u1} to complete the proof. 
\end{proof}

Our second objective in this subsection is (following Remark \ref{rem:plan}) analyze the probability that the range of $\N(0,t)$ is small (see Lemma \ref{lem:r:unif}).

\begin{lem} \label{lem:r:unif} There exists $\Lambda>1$ such that for all $t\ge L$
\be \label{eq:r:n} \mP_0\left(\car(\N(0,t)) \le \Lambda\right) \ge \frac{3}{4} .\ee
\end{lem}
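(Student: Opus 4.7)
\emph{Proof plan for Lemma \ref{lem:r:unif}.} The strategy is to decompose $\N(t,x)-\N(t,y) = \mathbf{N}^{(1)}(t;x,y) - \mathbf{N}^{(2)}(t;x,y)$ and to bound each of the two ranges separately. Writing
\[
\car\big(\N(0,t)\big)=\sup_{x,y\in[0,1]} |\mathbf{N}(t;x,y)|\le \sup_{x,y\in[0,1]}|\mathbf{N}^{(1)}(t;x,y)|+\sup_{x,y\in[0,1]}|\mathbf{N}^{(2)}(t;x,y)|,
\]
it is enough to produce a constant $\Lambda>1$ such that each supremum exceeds $\Lambda/2$ with probability at most $1/8$, uniformly in $t\ge L$; the union bound then yields \eqref{eq:r:n}.

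The $\mathbf{N}^{(2)}$--part is the easy one. By Lemma \ref{lem:u:incr} applied with $\lambda=1$ (say),
\[
\mP_0\Big(\sup_{x,y}|\mathbf{N}^{(2)}(t;x,y)|>1\Big)\le 2\exp(-e^t/C_1),
\]
and since $L$ was already chosen to be large (we only need $e^L\ge C_1\log 16$), this is bounded by $1/8$ for every $t\ge L$. So the $\mathbf{N}^{(2)}$--contribution to the range is at most $1$ except on an event of probability at most $1/8$.

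The main work is bounding $\sup_{x,y}|\mathbf{N}^{(1)}(t;x,y)|$ uniformly in $t$. The key observation is the linearity identity
\[
\mathbf{N}^{(1)}(t;x,y) = \mathbf{N}^{(1)}(t;x,0) - \mathbf{N}^{(1)}(t;y,0),
\]
so $\sup_{x,y}|\mathbf{N}^{(1)}(t;x,y)|\le 2\sup_{x\in[0,1]}|\mathbf{N}^{(1)}(t;x,0)|$. Now $x\mapsto \mathbf{N}^{(1)}(t;x,0)$ is a centered Gaussian process on $[0,1]$ with $\mathbf{N}^{(1)}(t;0,0)=0$, and by Lemma \ref{lem:G:space} its intrinsic (canonical) pseudo-metric satisfies
\[
\E_0\big[|\mathbf{N}^{(1)}(t;x,0)-\mathbf{N}^{(1)}(t;y,0)|^2\big]=\int_0^\infty\!\!\int_0^1[G(s,x,z)-G(s,y,z)]^2\,dzds\le C_2\,d(x,y).
\]
Moreover, shift-invariance of space-time white noise together with the convolution representation makes $\mathbf{N}^{(1)}(t;\cdot,0)$ \emph{stationary in $t$}, hence its distribution does not depend on $t$. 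Combining stationarity with BDG (exactly as in the proof of Lemma \ref{lem:u:incr} and Lemma 3.4 of \cite{athr-jose-muel}) gives $\E_0[|\mathbf{N}^{(1)}(t;x,0)-\mathbf{N}^{(1)}(t;y,0)|^{2k}]\le C^k k^k\,d(x,y)^k$ for every integer $k\ge 1$, and then a Kolmogorov/chaining argument (or equivalently Dudley's entropy bound, whose integrand is $\sqrt{\log(C/\epsilon^2)}$ and therefore integrable) yields a finite constant $M$, independent of $t$, with
\[
\E_0\Big[\sup_{x\in[0,1]}|\mathbf{N}^{(1)}(t;x,0)|\Big]\le M.
\]
Setting $\Lambda:=\max(1,32M)$ and applying Markov's inequality,
\[
\mP_0\Big(\sup_{x,y}|\mathbf{N}^{(1)}(t;x,y)|>\Lambda-1\Big)\le \mP_0\Big(2\sup_x|\mathbf{N}^{(1)}(t;x,0)|>\Lambda-1\Big)\le \frac{2M}{\Lambda-1}\le \tfrac18.
\]

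Combining the two probability estimates via a union bound, $\mP_0(\car(\N(0,t))\le \Lambda)\ge 1-\tfrac18-\tfrac18 = \tfrac34$ for every $t\ge L$, as required. The main technical obstacle is the uniform-in-$t$ supremum bound for $\mathbf{N}^{(1)}$; once stationarity in $t$ is recognized, this reduces to a single application of Gaussian chaining against the locally Brownian variance estimate of Lemma \ref{lem:G:space}.
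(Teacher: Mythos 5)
Your proof is correct and follows essentially the same route as the paper: decompose $\N(0,t)$ into $\N^{(1)}$ and $\N^{(2)}$, use Lemma~\ref{lem:u:incr} and the choice of $L$ to make $\N^{(2)}$ negligible, and use stationarity of $\N^{(1)}(t;\cdot,0)$ to pick a uniform-in-$t$ threshold $\Lambda$. The only cosmetic difference is that where the paper just invokes stationarity and a.s.\ finiteness of the range to choose $\Lambda$ as a high quantile, you supply the finiteness explicitly via a Dudley/chaining bound from Lemma~\ref{lem:G:space} followed by Markov's inequality (and you should take, say, $\Lambda=\max(2,32M+1)$ so that $\Lambda>1$ and $\Lambda-1>0$ are both guaranteed).
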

\begin{proof}
Since $\mathscr \N^{(1)}(t; \cdot, 0)$ is stationary in $t$, we can choose $\Lambda>1$ such that 
\[ \mP_0\left(\car \left(\N^{(1)}(t; \cdot, 0)\right) \le \frac{\Lambda}{2}\right) \ge \frac{4}{5}. \]
In the proof of Lemma \ref{lem:vol:n} we have seen that with probability at least $\frac{19}{20}$ one has
\[\car\left(\N^{(2)}(t;\cdot,0)\right)\le \frac{a}{4}.\]
Therefore with probability at least $\frac34$ we have 
\[\car\Big(\N(0,t)\Big) \le \car\Big(\N^{(1)}(t; \cdot, 0) \Big) + \car\Big(\N^{(2)}(t; \cdot, 0) \Big) \le \Lambda. \] 
This completes the proof. 
\end{proof}

\begin{rem} We fix and use a $\Lambda$ as in Lemma \ref{lem:r:unif} for the rest of the article. In particular the $\tau_i$'s defined in \eqref{eq:tau} are defined in terms of this particular choice of $\Lambda$.
\end{rem}

Our final objective is to show that there are sufficiently many $T_i$'s such that $\car\left(\N(T_{i-1}, T_i)\right)\le \Lambda$ and $\cs\Big(\frac{a}{2}; \N\left(T_{i-1}, T_i\right)\Big) \ge {\mathcal{C}}_{\gamma} a^{d-2+\gamma}$ (See Lemma \ref{lem:ti}). Before we proceed, we will need 
\begin{defn} Let $\tilde{\mathcal F}_t $ be the filtration generated by white noise 
\[ \tilde{\mathcal F}_t = \sigma\left\{\W(A\times [r,s]);\; A\subset [0,1],\, 0\le r,s\le t\right\}.\] 
Let $\mathcal G_i$ denote the $\sigma$-algebra generated by the white noise up to time $T_i$:
\[ \mathcal G_i =\left\{ \textnormal{A}\in \mathcal F: \textnormal{A} \cap \left\{T_i \le t\right\}\in \tilde{\mathcal F}_t\right\}\]
Let 
\[\mathcal H_i = \mathcal G_i \vee \sigma\left\{\X_t;\, t\ge 0 \right\}\]
the $\sigma$-algebra generated by the white noise up to time $T_i$ and the center of mass process. 
%\right\} $\left\{\W(A),\, A\subset [0,1]\times [0, T_i]\right\}\vee\left\{\X_t,\, t\ge 0\right\}$
\end{defn}

\begin{lem}\label{lem:ti} Let $\Lambda$ be as in Lemma \ref{lem:r:unif}. %Denote by $\cs\Big(\cn\left(T_{i-1}, T_i\right);\epsilon) \Big)$ the sausage  of radius $\epsilon$ around $\cn(T_{i-1}, T_i)$. 
We have
\be \label{eq:ti} \mP_0\left(\car\left(\N(T_{i-1}, T_i)\right) \le \Lambda,\; \left |\cs\Big(\frac{a}{2};\,\N(T_{i-1}, T_i))\Big) \right| \ge {\mathcal C}_\gamma a^{d-2+\gamma}\, \bigg \vert\, \mathcal H_{i-1}\right) \ge \frac{1}{2}.
\ee
\end{lem}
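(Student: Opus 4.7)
My plan is to use the Markov property at the stopping time $T_{i-1}$ together with the decomposition of the white noise into its spatial-average part (which drives the center of mass) and its spatially-mean-zero part (which drives the shape). Both events in \eqref{eq:ti} are invariant under constant translations of $\N(T_{i-1},T_i;\cdot)$, so they depend only on the centered shape process
\[
\widetilde{\N}(x) \;:=\; \N(T_{i-1},T_i;x) - \int_0^1 \N(T_{i-1},T_i;y)\,dy \;=\; \int_{[T_{i-1},T_i]\times[0,1]} \bigl[G(T_i-s,x-y)-1\bigr]\,\W(ds\,dy),
\]
where I used $\int_0^1 G(T_i-s,x-y)\,dx=1$.

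First I would verify that $T_i$ is $\mathcal{H}_{i-1}$-measurable and that $T_i - T_{i-1}\ge L$. The time $S_i$ is $\mathcal{G}_{i-1}$-measurable because it is determined by $T_{i-1}$ and $\N(T_{i-2},T_{i-1})$; then $T_i$ is the first $\tau_j$ at or after $S_i$, and each $\tau_j$ is measurable with respect to the center-of-mass path, so $T_i \in \mathcal{H}_{i-1}$. The bound on $T_i - T_{i-1}$ is immediate from $S_i \ge T_{i-1}+L$.

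The key step is to show that $\widetilde{\N}$ is independent of $\mathcal{H}_{i-1}$. By the same vanishing-covariance argument used in Lemma~\ref{lem:cmrind} (the kernel $G(T_i-s,x-y)-1$ integrates to zero in $y$), $\widetilde{\N}$ is jointly Gaussian and uncorrelated with the spatial-average increments of $\W$ after $T_{i-1}$, i.e.\ with $\{\X_{T_{i-1}+t}-\X_{T_{i-1}}\}_{t\ge 0}$, hence independent of the entire $\{\X_t\}$ path. Combined with the strong Markov property of $\W$ at $T_{i-1}$, which gives independence from $\mathcal{G}_{i-1}$, this delivers independence from $\mathcal{H}_{i-1}$. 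Moreover, conditional on $\mathcal{H}_{i-1}$, the value $\Delta := T_i - T_{i-1}\ge L$ is deterministic, and a direct covariance check shows that the conditional law of $\widetilde{\N}$ equals the unconditional law of the analogous centered version of $\N(0,\Delta)$.

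To finish, I would apply Lemmas~\ref{lem:r:unif} and~\ref{lem:vol:n} (the latter applies because $L\ge \textnormal{D}+2|\log a|$) to obtain
\[
\mP_0\bigl(\car(\N(0,\Delta))\le \Lambda\bigr)\ge \tfrac34, \qquad \mP_0\bigl(|\cs(a/2;\N(0,\Delta))|\ge \mathcal{C}_\gamma a^{d-2+\gamma}\bigr)\ge \tfrac34,
\]
and then transfer these bounds conditionally via the independence established above, combining them through the elementary inequality $\mP(A\cap B)\ge \mP(A)+\mP(B)-1=\tfrac12$. The main technical obstacle is the independence claim: although $\mathcal{H}_{i-1}$ encodes the entire center-of-mass path and thus the spatial average of $\W$ on $[T_{i-1},T_i]$, the translation-invariant functionals of interest depend only on the shape component, which must be carefully disentangled and shown to be independent of the past noise, the future center of mass, and the random integration window $[T_{i-1},T_i]$ itself.
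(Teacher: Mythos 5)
Your proposal is correct and follows essentially the same path as the paper's proof: reduce to the $\mathcal{H}_{i-1}$-measurability of $(T_{i-1},T_i)$, isolate the part of $\N(T_{i-1},T_i)$ that is invariant under adding constants and show it is independent of $\mathcal{H}_{i-1}$ (the paper packages this as the $\sigma$-algebra of increments $G_{t-r}(x,z)-G_{t-r}(0,z)$ integrated against $\W$ while you use the mean-zero centering $\widetilde{\N}$ — these are equivalent since both the range and the sausage volume are translation-invariant), then invoke Lemmas~\ref{lem:r:unif} and~\ref{lem:vol:n} conditionally and combine the two $3/4$ bounds via $\mP(A\cap B)\ge \mP(A)+\mP(B)-1$.
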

\begin{proof} It is enough to show 
\begin{align}
\label{eq:r} \mP_0\left(\car\left(\N\left(T_{i-1}, T_i\right)\right) \le \Lambda \, \bigg \vert\, \mathcal H_{i-1}\right) &\ge \frac34, \\
\label{eq:vol} \mP_0\left( \left |\cs\Big(\frac{a}{2};\,\N\left(T_{i-1}, T_i\right)\Big) \right| \ge {\mathcal C}_\gamma a^{d-2+\gamma}\, \bigg \vert\, \mathcal H_{i-1}\right) &\ge \frac34.
\end{align}
We have from Lemma \ref{lem:r:unif}
\bes
\mP_0\Big(\car\left(\N\left(0,t \right)\right) \le \Lambda\Big) \ge \frac34,
\ees
uniformly in $t\ge L$. Then observe 
\bes \begin{split}
&\mP_0\Big(\car\left(\N\left(T_{i-1}, T_i\right)\right) \le \Lambda \, \bigg \vert\, \mathcal H_{i-1}\Big)\\ & = \int_0^{\infty} \int_{s+L}^\infty \mP_0\left(T_{i-1} \in ds,\; T_i\in dt,\; \car\Big(\N(s,t)\Big)\le \Lambda\, \bigg \vert\, \mathcal H_{i-1}\right) \\
&= \int_0^{\infty} \int_{s+L}^\infty \mathbf{1}\left\{T_{i-1} \in ds,\; T_i\in dt \right\}\cdot \mP_0\left( \car\Big(\N(s,t)\Big)\le \Lambda\right).
\end{split}
\ees
The second equality follows from an argument similar to Lemma \ref{lem:cmrind}, %that shows that the event $\left\{T_{i-1} \in ds,\; T_i\in dt\right\}$ is independent of $\car(\N(s,t))$. 
In fact the event $\left\{T_{i-1} \in ds,\; T_i\in dt\right\}$ is measurable with respect to the sigma field $\mathcal H_{i-1}$,
%\[ \sigma(\X_{\tilde t}, \, \tilde t\ge 0)\vee \sigma\Big(\W([0,1]\times [0,r]),\; r\le s\Big), \]
 while $\car(\N(s,t))$ depends on
\[\sigma\left(\int_s^{\tilde t} \int_0^1 \left[G_{t-r} (x,z) - G_{t-r}(0,z)\right] \W(dz dr),\;x \in [0,1],\; \tilde t \ge s\right),  \]
which is independent of $\mathcal H_{i-1}$.
From this we obtain \eqref{eq:r}. Similarly, to show \eqref{eq:vol} %it is enough to prove 
%\[ P\left(\left |\cs\Big(\frac{a}{2};\, \N\left(0,t\right)\Big) \right | \ge \tilde {\mathcal C}_\gamma a^{d-2+\gamma}\right) \ge \frac34,
 %\]
we use Lemma \ref{lem:vol:n},  and integrate over the realizations of $T_{i-1}$ and $T_i$. This completes the proof of the lemma.
\end{proof}

%\begin{rem}\label{rem:range:vol} In fact the argument above shows that one can replace \eqref{eq:ti} by 
%\begin{small}
%\bes\begin{split}  
%&P\Big(\cR\left(\cn\left(T_{i-1}, T_i\right)\right) \le \alpha,\; \left |\cs\Big(\cn\left(T_{i-1}, T_i\right);\epsilon\Big) \right| \ge C_\gamma \epsilon^{d-2+\gamma}\bigg\vert\; \sigma\left(W([0,1]\times [0, T_{i-1}] \right) \vee \sigma(X_{\tilde t}, \tilde t\ge 0)\Big) \ge \frac{1}{2}
%\end{split}
%\ees
%\end{small}
%{\color{red} One has to define precisely the $\sigma$-field.}
%\end{rem}

Consequently, due to \eqref{eq:r:u-n} and \eqref{eq:vol:u-n} and using Lemma \ref{lem:ti}, there are sufficiently many $T_i$ such that $\car\left(\uv(T_i)\right) \le \Lambda +2\delta$ and $\left|\cs^1\left(a; T_i\right) \right| \ge {\mathcal{C}}_{\gamma} a^{d-2+\gamma}.$

\subsection{Sufficiently many $T_i$ far apart} \label{subsec:stifp}

\begin{lem} There exists $C_2>0$ such that for all $t\ge L$
\[ \mP_0\left[S_{i+1}- T_{i} >t \; \Big\vert \mathcal{H}_{i-1}\right]  \le \frac{C_2e^{-8\pi^2 t}}{\delta^2}. \]
In particular for any $\eta< 8\pi^2$ there exists $C_3(\eta)>0$ such that
\be \label{eq:siti:exp} \E_0\left[\exp\left(\eta (S_{i+1}- T_{i} )\right)\; \bigg\vert \mathcal{H}_{i-1}\right] \le e^{\eta L}+ \frac{C_3(\eta)}{\delta^2}\ee
\end{lem}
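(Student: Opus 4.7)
The plan is to unpack the definition of $S_{i+1}$ using Lemma~\ref{lem:range:conv}, reduce the tail event to a conditional second-moment bound on $\N(T_{i-1},T_i)$ minus its spatial mean, and then integrate the tail to obtain the exponential moment.

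\emph{Reduction of the tail event.} First, both $T_{i-1}$ and $T_i$ are $\mathcal{H}_{i-1}$-measurable: $T_{i-1}\in\mathcal{H}_{i-1}$ by construction, while inductively $S_i$ is a functional of $T_{i-2},T_{i-1}$, and $\N(T_{i-2},T_{i-1})$, and $T_i=\min\{\tau_j:\tau_j\ge S_i\}$ uses only $S_i$ together with the center-of-mass process $\X$, both of which lie in $\mathcal{H}_{i-1}$. Since $S_{i+1}\ge T_i+L$, the event $\{S_{i+1}-T_i>t\}$ (for $t\ge L$) forces the defining infimum not to be attained by time $T_i+t$, so in particular $\car(G_t * \N(T_{i-1},T_i))>\delta$. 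Applying Lemma~\ref{lem:range:conv} and squaring yields the inclusion
\[
\{S_{i+1}-T_i>t\}\subset\left\{\bigl\|\N(T_{i-1},T_i)-\overline{\N(T_{i-1},T_i)}\bigr\|_2^2 > \frac{\delta^2 e^{8\pi^2 t}}{16 d^2}\right\}.
\]

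\emph{Conditional second moment.} By conditional Markov it now suffices to bound $\E_0[\|\N(T_{i-1},T_i)-\overline{\N(T_{i-1},T_i)}\|_2^2\mid\mathcal{H}_{i-1}]$ uniformly. I would expand in the spatial Fourier basis: the $k$th coefficient of $\N(T_{i-1},T_i;\cdot)$ equals $\int_{T_{i-1}}^{T_i} e^{-4\pi^2 k^2(T_i-r)}\hat{\W}_k(dr)$, where $\hat{\W}_k$ is the $k$th spatial Fourier mode of $\W$. The nonzero-$k$ modes of $\W$ are independent of $\X$ (which uses only the $k=0$ mode) and of the noise up to $T_{i-1}$, hence independent of $\mathcal{H}_{i-1}$; combined with the $\mathcal{H}_{i-1}$-measurability of $T_{i-1},T_i$, an It\^o isometry gives a conditional second moment bounded by $d/(8\pi^2 k^2)$ for each nonzero mode. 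Summing $\sum_{k\neq 0} k^{-2}=\pi^2/3$ yields a constant depending only on $d$, which together with the inclusion above delivers the first inequality with an appropriate $C_2=C_2(d)$.

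\emph{Exponential moment and main obstacle.} Because $S_{i+1}-T_i\ge L$ almost surely, the tail-integration identity
\[
\E_0\!\left[e^{\eta(S_{i+1}-T_i)}\mid\mathcal{H}_{i-1}\right] = e^{\eta L}+\int_L^\infty \eta e^{\eta t}\,\mP_0(S_{i+1}-T_i>t\mid\mathcal{H}_{i-1})\,dt
\]
combined with the tail bound from part (i) and the assumption $\eta<8\pi^2$ makes the integral convergent, equal to $\eta C_2 e^{(\eta-8\pi^2)L}/[(8\pi^2-\eta)\delta^2]$, which is absorbed into $C_3(\eta)/\delta^2$. The step demanding most care is the independence bookkeeping underlying the It\^o isometry: $T_i$ is \emph{not} a stopping time of the white-noise filtration alone, so one cannot simply restart the SHE at time $T_i$. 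What rescues the argument is the observation that $T_i$ depends on $\W$ only through its spatially constant (zero-mode) component, which is Gaussian-independent of all nonzero modes; this decoupling is precisely what licenses treating $T_{i-1},T_i$ as deterministic in the Fourier second-moment calculation.
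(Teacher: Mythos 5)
Your proof is correct and follows essentially the same path as the paper: deduce $\car(G_t * \N(T_{i-1},T_i)) > \delta$ from the tail event, apply Lemma~\ref{lem:range:conv} to convert this into a large-$L^2$-norm event for $\N(T_{i-1},T_i)$ minus its spatial mean, expand in the spatial Fourier basis, and exploit the independence of the nonzero Fourier modes of the noise after $T_{i-1}$ from $\mathcal{H}_{i-1}$ (with $T_{i-1},T_i$ being $\mathcal{H}_{i-1}$-measurable). The one place you diverge, and it is a genuine simplification, is the moment step: the paper estimates $\E\bigl[(\sum_{k\neq 0}|a_k|^2)^{1/2}\bigr] \le C_0$ and $\Var\bigl[(\sum_{k\neq 0}|a_k|^2)^{1/2}\bigr] \le C_1$ and then applies Chebyshev, obtaining a bound $\tfrac{dC_1}{(\delta e^{4\pi^2 t}/4d^{3/2}-C_0)^2}$; making this tractable is exactly why the paper imposes the condition $e^{4\pi^2 L} \ge 8C_0 d^{3/2}/\delta$ in its choice of $E$ in \eqref{eq:delta:L}. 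You instead apply Markov directly to the squared $L^2$ norm with the conditional It\^o isometry, which gives the required $O(e^{-8\pi^2 t}/\delta^2)$ bound without the mean-subtraction and therefore without needing that constraint on $L$. Your final remark about $T_i$ not being a stopping time of the white-noise filtration alone is the right concern, though a touch imprecise as stated: $T_i$ depends not only on the zero mode $\X$ but also on $S_i$, hence on nonzero-mode noise up to $T_{i-1}$; the precise statement is that $T_{i-1},T_i$ are $\mathcal{H}_{i-1}$-measurable, and the nonzero Fourier modes of $\W$ after $T_{i-1}$ are independent of $\mathcal{H}_{i-1}$ (being independent both of $\mathcal{G}_{i-1}$ by the strong Markov property and of $\X$ by orthogonality of Fourier modes), which is all the It\^o-isometry step needs. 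Minor: the kernel exponent should be $e^{-2\pi^2 k^2(T_i-r)}$ to match $\partial_t = \tfrac12\partial_x^2$ and the paper's Fourier expansion of $G$, but this only changes a constant.
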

\begin{proof}
Recall from \eqref{eq:t:s}
\[ S_{i+1} =  \inf\left\{t\ge T_{i}+L: \car\Big(G_{t-T_{i}}* \N(T_{i-1}, T_{i})\Big) \le \delta\right\}. \]
The event $\{S_{i+1}- T_{i} >t\}$ implies that $\car\Big(G_{t}* \N(T_{i-1}, T_{i})\Big) > \delta$, and in light of Lemma \ref{lem:range:conv}, it further implies 
\[ \left\|\N(T_{i-1}, T_{i}) - \int_0^1\N(T_{i-1}, T_{i};\, x) dx \right\|_2 >\frac{\delta}{4d} e^{4\pi^2 t}. \]
Using the subscript $j$ to denote the components of $\N(T_{i-1}, T_i)$ it follows that there exists a $1\le j\le d$ such that 
\[\left\|\N_j(T_{i-1}, T_{i}) - \int_0^1\N_j(T_{i-1}, T_{i};\, x) dx \right\|_2 >\frac{\delta}{4d^{\frac{3}{2}}} e^{4\pi^2 t}. \]
We can formally write each component $\W_j$ of the white noise as $\W_j(dydr) = \sum_{k \in \Z} e^{\imath(2\pi k y) } dB_k(r) dy$, where the $B_k$'s are independent standard complex Brownian motions (that is $B_k = \frac{R_k}{\sqrt 2} +\imath \frac{C_k}{\sqrt 2}$ where $R_k, C_k$ are standard real Brownian motions) with $\bar{B_k}= B_{-k}$. This can be seen by integrating both sides with test functions and computing the second moments. Since 
\[ G_t(x,y) =\sum_{l \in \Z} e^{-2\pi^2l^2 t} e^{\imath 2\pi l(x-y)},\]
the $k$th Fourier coefficient of $\N_j(s, \tilde s),\; k\ne 0$ is 
\[ a_k=\int_s^{\tilde s} e^{-2\pi^2 k^2 (\tilde s-r)}  dB_k(r) %\sim N\left(0, \frac{1}{2\pi^2 k^2} (1- e^{-2 \pi^2 k^2 (\tilde s-s)})\right)
\]
Furthermore we have $\bar{a_k}=a_{-k}$  and $a_k$ is independent of $a_{\tilde k}$ if $\tilde k\ne k, -k$. Now
\[   \text{E}\left[\sum_{k\ne 0} |a_k|^2\right] = \sum_{k\ne 0} \frac{1}{2\pi^2k^2}\left(1- e^{-2 \pi^2 k^2 (\tilde s-s)}\right),\] and so there exist positive constants $C_0,\, C_1$ such that 
\bes
\text{E}\left[\left(\sum_{k\ne 0} |a_k|^2\right)^{\frac12}\right] \le C_0 \quad \text{and} \quad  \text{Var} \left[\left(\sum_{k\ne 0} |a_k|^2\right)^{\frac12}\right] \le C_1
%\text{Var} \left[\sum_{k\ne 0} |a_k|^2\right] & \le \sum_{k\ne 0} \text{E}[|a_k|^4] + \sum_{\stackrel{k\ne \tilde k}{k, \tilde k \ne 0}} \text{E}[|a_k|^2] \text{E}[|a_{\tilde k}|^2] \le \sum_{k\ne 0} \text{E}[|a_k|^4] +\left(\sum_{k\ne 0} \text{E}[|a_k|^2]\right)^2 \le C_1,
\ees
uniformly in $s$ and $\tilde s$. Therefore 
\be \label{eq:si-ti}\begin{split}
& \mP_0\left[S_{i+1}- T_{i} >t \; \Big\vert \; \mathcal{H}_{i-1}\right] \\
&\le d \cdot\mP_0\left[\left\|\N_j(T_{i-1}, T_{i}) - \int_0^1\N_j(T_{i-1}, T_{i}; x) dx \right\|_2 >\frac{\delta}{4d^{\frac32}} e^{4\pi^2 t} \bigg\vert \; \mathcal{H}_{i-1}\right] \\
&\le \frac{d \cdot C_1}{\left(\frac{\delta}{4d^{\frac32}}e^{4\pi^2 t} -C_0\right)^2}.\end{split}\ee
The second part of the lemma follows from 
\begin{align*}
 \E_0\left[\exp\left(\eta (S_{i+1}- T_{i} )\right)\; \bigg\vert \; \mathcal H_{i-1}\right] 
& \le e^{\eta L} - \int_L^\infty e^{\eta t} \mP_0\left[S_{i+1}- T_{i} >t \; \Big\vert \; \mathcal H_{i-1}\right] \, dt,
\end{align*}
and the above tail bound.
\end{proof}

We conclude this section regarding the spacings of $S_i$ and $T_i$. This will be crucially used on a specific subset of $T_i$'s to show the upper bound.
\begin{lem} \label{lem:eAla}
There is a constant $\widetilde{C}>0$ such that for any $A_4>0$ and $L$ as in \eqref{eq:delta:L} we have  
\[ \mP_0\left(\sum_{i=1}^{A_4 \frac{T^{\frac{d}{d+2}}}{L}} (S_i-T_{i-1}) >  \widetilde{C} A_4T^{\frac{d}{d+2}}\right) \le \exp\left(-\frac{\widetilde{C}A_4}{2} T^{\frac{d}{d+2}}\right). \]
\end{lem}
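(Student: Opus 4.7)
The argument is a Chernoff bound driven by iterating the conditional moment generating function estimate \eqref{eq:siti:exp}. Fix any $\eta\in(\tfrac12,8\pi^2)$ -- say $\eta=1$ -- set $M:=e^{\eta L}+C_3(\eta)/\delta^2$, write $Z_i:=S_i-T_{i-1}$, and put $N:=A_4T^{d/(d+2)}/L$, so the goal is to bound $\mP_0(\sum_{i=1}^N Z_i>\widetilde CA_4T^{d/(d+2)})$. The measurability required for the iteration is direct from \eqref{eq:t:s}: both $T_{i-1}$ and $S_i$ are functionals of the white noise on $[0,T_{i-1}]$, so each $Z_i$ is $\mathcal{H}_{i-1}$-measurable; consequently $\sum_{i=1}^{j-1}Z_i$ is $\mathcal{H}_{j-2}$-measurable for every $j\ge 2$.

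Combined with the preceding lemma in the form $\E_0[e^{\eta Z_j}\,|\,\mathcal{H}_{j-2}]\le M$ (after re-indexing \eqref{eq:siti:exp}), I would iterate the tower property to obtain
\begin{equation*}
\E_0\!\left[\exp\!\Bigl(\eta\sum_{i=1}^{N}Z_i\Bigr)\right]\le e^{\eta S_1^{\ast}}\,M^{N-1},
\end{equation*}
where $S_1^{\ast}$ is a deterministic bound on $Z_1=S_1$, finite because $S_1$ depends only on the continuous initial datum $\uv_0$, via Lemma \ref{lem:range:conv} and the choice of $L$. Markov's inequality at threshold $x=\widetilde CA_4T^{d/(d+2)}$ then gives
\begin{equation*}
\mP_0\!\left(\sum_{i=1}^{N}Z_i>x\right)\le \exp\!\left(A_4T^{d/(d+2)}\!\left[\tfrac{\log M}{L}-\eta\widetilde C\right]+\eta S_1^{\ast}\right).
\end{equation*}

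With $\eta=1$, $\delta=a/100$, and $L=E+3|\log a|$, the ratio $(\log M)/L$ is bounded by a constant depending only on $a$ and $d$, independent of $T$ and $J$. Any choice $\widetilde C\ge 2(\log M+\eta S_1^{\ast})/L$ makes the exponent above at most $-\widetilde CA_4T^{d/(d+2)}/2$ whenever $A_4T^{d/(d+2)}\ge L$ (i.e.\ whenever $N\ge 1$), which gives the claimed bound. For $A_4T^{d/(d+2)}<L$ the sum is empty and the event $\{0>\widetilde CA_4T^{d/(d+2)}\}$ is vacuous, so the inequality holds trivially. The only conceptual step requiring any thought is the measurability argument underlying the tower iteration -- without it the conditional MGF bound cannot be chained -- but this is direct from the construction in \eqref{eq:t:s}, and the rest is arithmetic in the choice of $\widetilde C$.
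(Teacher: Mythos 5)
Your proposal is correct and follows essentially the same route as the paper: a Chernoff bound obtained by iterating the conditional moment-generating-function estimate \eqref{eq:siti:exp} over the filtration $\mathcal{H}_{i}$, then choosing $\widetilde C$ large. The paper's proof is terser (it does not spell out the measurability underlying the tower iteration nor treat $Z_1=S_1$ separately), but those are exactly the points you identified and they do not change the argument.
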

\begin{proof} With the choice of $\eta=1$ in \eqref{eq:siti:exp} we obtain 
\begin{align*}
\mP_0\left(\sum_{i=1}^{A_4 \frac{T^{\frac{d}{d+2}}}{L}} (S_i-T_{i-1}) >  \widetilde{C}A_4T^{\frac{d}{d+2}}\right) &\le \E_0 \exp\left(\sum_{i=1}^{A_4 \frac{T^{\frac{d}{d+2}}}{L}} (S_i-T_{i-1}) -\widetilde{C}A_4T^{\frac{d}{d+2}}\right)\\
&\le \left(e^L + \frac{C_3}{\delta^2}\right)^{A_4 \frac{T^{\frac{d}{d+2}}}{L}} \exp\left(-\widetilde{C}A_4T^{\frac{d}{d+2}}\right).
\end{align*}
The lemma follows by a large choice of the constant $\widetilde{C}$ above.
\end{proof}

\subsection{Estimates for Soft obstacles} \label{subsec:esoft}
We will need a few lemmas which lead up Proposition \ref{prop:soft}. This is a key proposition that will be used in the proof of the upper bound in Theorem \ref{thm2}.
\begin{prop}\label{prop:n:chain} There is a $C_3>0$ such that for all $s_0\le 1$
\[\mP_0\left(\sup_{s\le s_0}\sup_{x\in [0,1]} \left|\N(0,t+s; x) -\N(0,t; x)\right|>\lambda\right) \le \exp\left(- \frac{C_3^2\lambda^2}{\sqrt{s_0}}\right) \]
uniformly in $t$.
\end{prop}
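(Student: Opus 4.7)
The plan is to decompose the increment $\mathbf{N}(0,t+s;x) - \mathbf{N}(0,t;x)$ into two Gaussian pieces corresponding to the noise before time $t$ and between $t$ and $t+s$, bound the variances of the two pieces (plus the spatial and temporal increments) uniformly in $t$, and then appeal to a Kolmogorov/chaining bound already used in the paper (Lemma 3.4 of \cite{athr-jose-muel}) to upgrade a pointwise Gaussian tail into a tail on the double supremum.

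First I would write
\begin{align*}
\mathbf{N}(0,t+s;x) - \mathbf{N}(0,t;x)
 &= \int_{[0,t]\times[0,1]} \big[G(t+s-r,x-y) - G(t-r,x-y)\big]\,\mathbf{W}(dr\,dy) \\
 &\quad + \int_{[t,t+s]\times[0,1]} G(t+s-r,x-y)\,\mathbf{W}(dr\,dy),
\end{align*}
and compute the variance of each component via the Fourier expansion $G(u,z)=\sum_{k\in\Z} e^{-2\pi^2k^2 u} e^{i2\pi k z}$. For the second integral Parseval gives a variance of order $\int_0^s \sum_{k\in\Z} e^{-4\pi^2k^2 u}\,du \le C\sqrt{s}$, uniformly in $t$ and $x$. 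For the first integral, after the substitution $u=t-r$, Parseval yields
\[
\sum_{k\ne 0} \big[1-e^{-2\pi^2 k^2 s}\big]^2 \int_0^t e^{-4\pi^2 k^2 u}\,du \;\le\; \sum_{k\ne 0} \frac{[1\wedge 2\pi^2 k^2 s]^2}{4\pi^2 k^2},
\]
which is also $O(\sqrt{s})$ after splitting the sum at $k \sim s^{-1/2}$, and again uniformly in $t$. So for each fixed $(s,x)$ the increment is centered Gaussian with variance $\le C\sqrt{s_0}$ when $s\le s_0$.

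Next I would estimate the increments needed for the chaining step. For $s,\tilde s\in[0,s_0]$ with $\tilde s>s$, the process $\mathbf{M}(s,x):=\mathbf{N}(0,t+s;x)-\mathbf{N}(0,t;x)$ satisfies, by the same splitting plus a third telescoping integral over $[t,t+s]$, an increment variance of the form
\[
\mathrm{Var}\big(\mathbf{M}(\tilde s,x)-\mathbf{M}(s,x)\big) \;\le\; C\sqrt{\tilde s - s},
\]
via exactly the same Fourier computation applied to the difference $G(t+\tilde s-r,\cdot)-G(t+s-r,\cdot)$. For the spatial increment, the standard estimate (Lemma 3.1 of \cite{athr-jose-muel}) applied to both integrals gives
\[
\mathrm{Var}\big(\mathbf{M}(s,x)-\mathbf{M}(s,\tilde x)\big) \;\le\; C|x-\tilde x|.
\]
These are precisely the hypotheses (with Gaussian component variances bounded by a constant times $\sqrt{\tilde s-s}$ and $|x-\tilde x|$) required for the chaining lemma used repeatedly in Section \ref{sec:lb}.

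Finally I would invoke the chaining argument as in Proposition \ref{prop:u:alpha}: the bound $\mathrm{Var}(\mathbf{M}(s,x))\le C\sqrt{s_0}$ combined with the increment bounds yields, for some absolute constant $C_3>0$,
\[
\mP_0\Big(\sup_{s\le s_0}\sup_{x\in[0,1]}|\mathbf{M}(s,x)|>\lambda\Big) \;\le\; \exp\!\Big(-\frac{C_3^2 \lambda^2}{\sqrt{s_0}}\Big),
\]
uniformly in $t$. The main obstacle is verifying that the variance bound $O(\sqrt{s})$ for the ``past-noise'' piece is genuinely uniform in $t$: this is what forces the split of the Fourier sum at the scale $k\sim s^{-1/2}$, where the low-frequency modes contribute through the factor $[1-e^{-2\pi^2 k^2 s}]^2\le 4\pi^4 k^4 s^2$ and the high-frequency modes contribute through the decay of $\int_0^t e^{-4\pi^2 k^2 u}\,du \le (4\pi^2 k^2)^{-1}$, both giving the same $\sqrt{s}$ order regardless of how large $t$ is. Once this uniform Gaussian variance control is in hand the chaining step is routine.
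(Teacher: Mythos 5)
Your decomposition of $\mathbf{N}(0,t+s;x)-\mathbf{N}(0,t;x)$ into a ``past-noise'' and ``new-noise'' piece, and the Fourier/Parseval computation showing both marginal pieces have variance $O(\sqrt{s})$ uniformly in $t$, match the paper. The temporal-increment bound $C\sqrt{\tilde s - s}$ and the spatial-increment bound $C|x-\tilde x|$ are also correct, and the overall shape of the argument (compute variances, chain) is the same.

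The gap is in how you close the argument. Invoking ``the chaining argument as in Proposition~\ref{prop:u:alpha}'' with the raw spatial increment bound $C|x-\tilde x|$ only yields a tail of the form $\exp(-C\lambda^2)$, with no $s_0$-dependence: that is all that Lemma~3.4 of \cite{athr-jose-muel} delivers when the spatial index set is the full interval $[0,1]$ and the increment variance at the coarsest scale is $O(1)$. To get the advertised $\exp(-C_3^2\lambda^2/\sqrt{s_0})$, every spatial increment in the chain must have variance capped by (a constant times) $\sqrt{s_0}$, not just by $|x-\tilde x|$. The paper makes this explicit: its first lemma in Section~\ref{subsec:esoft} establishes the two-sided bound $\E_0[\mathbf{Z}_i^2(t,s;x,y)] \le C\,[\sqrt{s}\wedge|x-y|]$ (the $\sqrt{s}$ cap comes from resumming the low-frequency modes after splitting at $k\sim s^{-1/2}$), and this refined bound is what drives the chaining over dyadic points in $x$; the $s$-direction is then handled by a separate chaining step using the same estimate. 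Your two bounds do jointly imply the needed cap --- since $\operatorname{Var}(\mathbf{M}(s,x)-\mathbf{M}(s,\tilde x))\le 2\operatorname{Var}(\mathbf{M}(s,x))+2\operatorname{Var}(\mathbf{M}(s,\tilde x))\le C\sqrt{s_0}$ --- so the refined increment variance $\operatorname{Var}(\mathbf{M}(s,x)-\mathbf{M}(s,\tilde x))\le C\,[\sqrt{s_0}\wedge |x-\tilde x|]$ really is available to you, but you never write it down, and the chaining step you cite does not exploit it. You should derive that capped bound explicitly (either in the form $\sqrt{s}\wedge|x-y|$ as the paper does, or via the marginal-variance observation above) and chain with it; as written, the claimed conclusion does not follow from the ingredients you supply.
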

The proof of the above proposition follows from a sequence of lemmas.  Define for $s,\, t\ge 0$ and $x, y\in [0,1]$
\[ \mathbf Z(t,s; x,y) = \Big\lbrace \N(0, t+s; x) -\N(0,t; x) \Big\rbrace -\Big\lbrace \N(0, t+s; y) -\N(0,t;y)\Big\rbrace \]

\begin{lem} There is a constant $C_1>0$ such that 
\[\mP_0\left(\left|\mathbf Z(t,s; x,y)\right|>\lambda\right) \le \exp\left(- \frac{C_1^2\lambda^2}{\sqrt s \wedge |x-y|}\right) \]
uniformly in $t$. 
\end{lem}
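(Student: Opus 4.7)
Observe that $\mathbf{Z}(t,s;x,y)\in\R^d$ is a centered Gaussian random vector: since the components of $\W$ are i.i.d.\ scalar white noises, the $d$ components of $\mathbf{Z}$ are i.i.d.\ centered Gaussian random variables, each obtained as a Wiener integral of the same deterministic kernel. It therefore suffices to bound the variance $\sigma^2(t,s;x,y)$ of a single component by $C\cdot(\sqrt{s}\wedge |x-y|)$: the standard Gaussian tail estimate applied componentwise, followed by a union bound over the $d$ components, then yields the claim with an appropriate constant $C_1$.

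To compute the variance, I would first split
\begin{align*}
\N(0,t+s;x) - \N(0,t;x) &= \int_t^{t+s}\!\!\int_0^1 G(t+s-r,x-z)\,\W(dz\,dr)\\
&\quad + \int_0^{t}\!\!\int_0^1 \bigl[G(t+s-r,x-z) - G(t-r,x-z)\bigr]\,\W(dz\,dr),
\end{align*}
and similarly for $y$. The two stochastic integrals above are supported on disjoint time intervals and are therefore independent, so after the substitutions $u=t+s-r$ and $u=t-r$ the variance of a single component of $\mathbf{Z}$ becomes $\sigma^2 = V_1 + V_2$ with
\begin{align*}
V_1 &= \int_0^{s}\!\int_0^1 [G(u,x-z)-G(u,y-z)]^2\,dz\,du,\\
V_2 &= \int_0^{t}\!\int_0^1 \bigl\{[G(u+s,x-z)-G(u,x-z)] - [G(u+s,y-z)-G(u,y-z)]\bigr\}^2\,dz\,du.
\end{align*}

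The main step is to prove $V_1 + V_2 \le C\cdot(\sqrt{s}\wedge d(x,y))$, where $d(\cdot,\cdot)$ denotes distance on the torus. Expanding $G$ in Fourier series and applying Parseval's identity as in \eqref{eq:parseval} gives
\[ V_1 = \sum_{k\ne 0}\frac{1-e^{-4\pi^2 k^2 s}}{4\pi^2 k^2}\bigl|1-e^{i 2\pi k d(x,y)}\bigr|^2, \]
while a similar computation, together with $(1-e^{-x})^2 \le 1-e^{-x}$, produces
\[ V_2 \le \sum_{k\ne 0} \frac{1-e^{-2\pi^2 k^2 s}}{4\pi^2 k^2}\bigl|1-e^{i 2\pi k d(x,y)}\bigr|^2. \]
Each of these sums is bounded in two distinct ways: (i) using $|1-e^{i\theta}|^2\le 4$ together with $1-e^{-x}\le\min(1,x)$, and splitting the sum at $k\approx s^{-1/2}$, yields $V_1,V_2 \le C\sqrt{s}$; (ii) using $|1-e^{i\theta}|^2 \le \min(4,\theta^2)$ and splitting at $k\approx d(x,y)^{-1}$, exactly as in the proof of Lemma \ref{lem:G:space}, yields $V_1,V_2 \le Cd(x,y)$. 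Taking the minimum establishes the required variance bound, and the Gaussian tail estimate finishes the proof. The only real obstacle is carrying out these elementary Fourier sum estimates carefully and uniformly in $t$; everything else is standard.
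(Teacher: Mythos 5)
Your proof is correct and follows essentially the same approach as the paper's: the same decomposition of the increment into the independent contributions from the time intervals $[t,t+s]$ and $[0,t]$ (the paper's terms \eqref{eq:z:2} and \eqref{eq:z:1}), the same Fourier/Parseval computation of the component variance, and the same elementary sum estimates yielding the bound $C\,(\sqrt{s}\wedge d(x,y))$. The paper similarly concludes with the standard Gaussian tail argument, so there is no substantive difference.
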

\begin{proof}
We first give an upper bound on $\E_0 \left[\mathbf Z_i^2(t,s; x,y) \right]$, for any fixed coordinate $\mathbf Z_i$ of $\mathbf Z$. This is easily seen to be equal to 
\begin{align} & \label{eq:z:1}\int_0^t \int_0^1 \left[G(t+s-r, x,z) -G(t+s-r,y,z) -G(t-r,x,z) +G(t-r, y,z)\right]^2dz dr  \\
&\label{eq:z:2}\qquad + \int_t^{t+s} \int_0^1 \left[G(t+s-r, x,z) -G(t+s-r, y,z)\right]^2 dz dr
\end{align}
Let us first look at \eqref{eq:z:2}. This is bounded by 
\begin{align*}
\int_0^s dr \sum_{k\ge 1} e^{-(2\pi k)^2r }\Big|1- \exp\left(i(2\pi k)(x-y)\right)\Big|^2 
&\le C\sum_{k\ge 1} \frac{1-e^{-(2\pi k)^2s}}{k^2} \Big[1\wedge |2\pi k(x-y)|\Big]^2 \\
&\le C \sum_{k\ge 1} \frac{1\wedge (2\pi k)^2 s}{k^2} \Big[1\wedge |2\pi k(x-y)|\Big]^2
\end{align*}
In the case that $\sqrt s \le |x-y|$, the above is bounded by 
\[ C\sum_{k=1}^{\frac{1}{2\pi |x-y|}} \frac{k^2s}{k^2}k^2|x-y|^2 +C\sum_{k=\frac{1}{2\pi |x-y|}+1}^{\frac{1}{2\pi \sqrt{s}}} s + C\sum_{k=\frac{1}{2\pi \sqrt{s}}+1}^{\infty} \frac{1}{k^2} \le C\sqrt{s}.\]

In the case that $|x-y|\le \sqrt s$ we obtain a bound 
\[ C\sum_{k=1}^{\frac{1}{2\pi \sqrt s}} \frac{k^2s}{k^2}k^2|x-y|^2 +C\sum_{k=\frac{1}{2\pi \sqrt s} +1}^{\frac{1}{2\pi |x-y|}} \frac{k^2|x-y|^2}{k^2} + C\sum_{k=\frac{1}{2\pi |x-y|}+1}^{\infty} \frac{1}{k^2} \le C|x-y|. \]
Let us next consider the term \eqref{eq:z:1}. This is bounded by
\begin{align*}
& \int_0^t dr \sum_{k \ge 1} \left[\exp\left(- \frac{(2\pi k)^2(t+s-r)}{2}\right)-\exp\left(- \frac{(2\pi k)^2(t-r)}{2}\right)\right]^2 \left[1\wedge \left|2\pi k(x-y)\right|\right]^2 
\\
&\le \int_0^t dr \sum_{k\ge 1} \exp\left(-\pi^2k^2 r\right)\left|1- \exp\left(-2\pi^2k^2 s\right)\right|^2\left[1\wedge \left|2\pi k(x-y)\right|\right]^2 \\
&\le C \int_0^t dr \sum_{k\ge 1} \frac{\left[1\wedge 2\pi^2k^2 s\right]^2}{ k^2}\left[1\wedge \left|2\pi k(x-y)\right|\right]^2 
\end{align*}
Therefore a similar bound as that for \eqref{eq:z:2} holds for \eqref{eq:z:1}. The conclusion of our arguments is that 
\[ \E_0 \left[\mathbf Z_i^2(t,s; x,y) \right]  \le C \left[\sqrt s \wedge |x-y|\right] \]
Since $\mathbf Z(t,s; x,y)$ is Gaussian we obtain the lemma by standard arguments. 
\end{proof}

By similar arguments (see Lemma 3.3 in \cite{athr-jose-muel})  one has 
\begin{lem} There is a constant $\tilde C_1>0$ such that for all $s\le 1$
\[ \mP_0\left(\left|\N(0, t+s;0) -\N(0,t;0)\right|>\lambda\right) \le \exp\left(- \frac{\tilde C_1^2\lambda^2}{\sqrt s}\right)\]
uniformly in $t$. 
\end{lem}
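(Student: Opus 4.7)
The plan is to reduce the tail bound to a variance estimate: since $\N(0,t+s;0)-\N(0,t;0)$ is a centred $d$-dimensional Gaussian random variable, it suffices to show that each coordinate has variance at most $C\sqrt{s}$, after which the bound follows from the standard Gaussian tail estimate $\mP(|Z|>\lambda)\le \exp(-\lambda^2/(2\sigma^2))$ up to absorbing constants.

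Write the increment as $I_1+I_2$, where
\begin{equation*}
I_1 = \int_{[0,t]\times[0,1]} \bigl[G(t+s-r,-y)-G(t-r,-y)\bigr]\W(dr\,dy),\qquad I_2=\int_{[t,t+s]\times[0,1]} G(t+s-r,-y)\W(dr\,dy).
\end{equation*}
For a single coordinate, use Parseval in $y$ via the Fourier expansion of $G$ (the same one used in Lemma \ref{lem:G:space} and the previous lemma). For $I_2$, after the change of variables $r\mapsto t+s-r$, the variance is a constant multiple of $\int_0^s\sum_{k\in\Z} e^{-(2\pi k)^2 r}\,dr$: the $k=0$ mode contributes $s\le \sqrt s$ (since $s\le 1$), and the $k\ne 0$ modes contribute $\sum_{k\ge 1}(1-e^{-(2\pi k)^2 s})/k^2$, which is $O(\sqrt s)$ by splitting the sum at $k\sim s^{-1/2}$ exactly as in the proof of the previous lemma.

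For $I_1$, the $k=0$ mode cancels in the kernel difference, so the variance reduces to
\begin{equation*}
C\int_0^t \sum_{k\ge 1} e^{-2(2\pi k)^2(t-r)}\bigl[1-e^{-(2\pi k)^2 s}\bigr]^2 dr \;\le\; C\sum_{k\ge 1} \frac{[1\wedge (2\pi k)^2 s]^2}{k^2},
\end{equation*}
and the same splitting argument at $k\sim s^{-1/2}$ bounds this sum by $C\sqrt s$ uniformly in $t$. Adding the two contributions gives $\mathrm{Var}\bigl(\N_i(0,t+s;0)-\N_i(0,t;0)\bigr)\le C\sqrt s$ for each coordinate $i$, whence the stated Gaussian tail bound with some $\tilde C_1>0$.

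There is no real obstacle; everything is a repetition of the two-variable mode computation already carried out in the previous lemma, with the slight bookkeeping difference that here the $k=0$ term does not vanish in $I_2$ but is harmlessly dominated by $\sqrt s$ on the range $s\le 1$.
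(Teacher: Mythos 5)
Your proof is correct and takes the same route the paper intends: the paper's ``proof'' is just a pointer (``by similar arguments, see Lemma 3.3 in \cite{athr-jose-muel}''), and your computation supplies exactly the Parseval/Fourier variance bound--split into the $[0,t]$ and $[t,t+s]$ contributions--followed by the Gaussian tail estimate, mirroring the mode-by-mode splitting at $k\sim s^{-1/2}$ used in the preceding lemma for $\mathbf Z(t,s;x,y)$. You also correctly flag the only bookkeeping difference from that lemma, namely that the $k=0$ term survives in the new-noise piece but is dominated by $\sqrt s$ on $s\le1$.
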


Let $\mathbb D_n$ denote the collection of dyadic points of the form $\frac{k}{2^n}$ in $[0,1]$. For any dyadic point $x\in \mathbb D_n$, we can find a sequence $0=p_0, p_1,\cdots, p_m=x$ of points such that $p_i, p_{i+1}$ are nearest neighbors in some $\mathbb D_k,\, k \le n$, and there are at most $2$ points in any $\mathbb D_k$. Now
\begin{align*}
\N(0, t+s;x) -\N(0, t; x)
 & = \Big[\N(0, t+s;0) -\N(0,t; 0)\Big] +\sum_{i=1}^m \mathbf Z(t,s; p_{i+1}, p_i)
\end{align*}
From this and a chaining argument, similar to that of Lemma 3.4 in \cite{athr-jose-muel} we obtain
\begin{lem} There is a $C_2>0$ such that for all $s\le 1$ 
\[ \mP_0\left(\sup_{x\in [0,1]}\left|\N(0,t+s;x) -\N(0,t;x)\right| >\lambda\right) \le \exp\left(-\frac{C_2^2\lambda^2}{\sqrt s}\right).
\]
uniformly in $t$.
\end{lem}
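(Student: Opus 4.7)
The plan is to run a dyadic chaining argument in the spirit of Lemma~3.4 of \cite{athr-jose-muel}, feeding in the two preceding pointwise estimates. By a.s.\ continuity of $x\mapsto \N(0,t+s;x)-\N(0,t;x)$ on $[0,1]$, I will first replace $\sup_{x\in[0,1]}$ by $\sup_{x\in\bigcup_n \mathbb D_n}$. Using the dyadic chain identity displayed just above the statement of the lemma, I will split the event $\{\sup_x>\lambda\}$ into two subevents with threshold $\lambda/2$ each: one for the root increment $|\N(0,t+s;0)-\N(0,t;0)|$, which is directly controlled by the preceding lemma and contributes $\exp(-\tilde C_1^2\lambda^2/(4\sqrt s))$, and one for the chain sum $\sup_x\bigl|\sum_i \mathbf Z(t,s;p_{i+1},p_i)\bigr|$.

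For the chain sum I will use the fact, asserted just above the lemma, that every dyadic chain $p_0,\dots,p_m$ contains at most two edges at each level $\mathbb D_k$. Setting $M_k^{*}:=\max|\mathbf Z(t,s;p,q)|$ over the $O(2^k)$ level-$k$ adjacent pairs $(p,q)$, this yields
\[\sup_{x}\Bigl|\sum_i \mathbf Z(t,s;p_{i+1},p_i)\Bigr|\ \le\ 2\sum_{k\ge 1}M_k^{*}.\]
I will then pick nonnegative thresholds $\{\lambda_k\}$ with $4\sum_k\lambda_k\le \lambda/2$, and apply a union bound within each level using the tail bound $\mP_0(|\mathbf Z(t,s;p,q)|>\lambda_k)\le \exp(-C_1^2\lambda_k^2/(\sqrt s\wedge 2^{-k}))$ from the first of the two preceding lemmas.

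The natural split is at the crossover level $k^{*}:=\lceil\log_2(1/\sqrt s)\rceil$: for $k\le k^*$ the controlling variance at scale $2^{-k}$ equals $\sqrt s$, while for $k>k^*$ it equals $2^{-k}$. A Kolmogorov-type choice $\lambda_k=c\lambda\cdot 2^{-\alpha k}$ with $\alpha\in(0,\tfrac12)$ makes $\sum_k\lambda_k=O(\lambda)$; each regime then produces, after the per-level union bound, a single exponential in $\lambda^2/\sqrt s$. Summing these level contributions with the root-term contribution yields the claimed inequality, for a sufficiently small constant $C_2>0$, uniformly in $t$ and $s\in(0,1]$.

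The main obstacle will be the level-by-level bookkeeping near $k^*$: the union-bound prefactor at that level is $2^{k^*}\asymp 1/\sqrt s$, and it must be absorbed into the Gaussian exponent without degrading the effective rate $\lambda^2/\sqrt s$. This is exactly the trade-off executed in the cited Lemma~3.4 of \cite{athr-jose-muel}; given the variance bound $\E_0|\mathbf Z(t,s;x,y)|^2\le C(\sqrt s\wedge|x-y|)$ supplied by the previous lemma, our problem reduces to a direct adaptation of that argument.
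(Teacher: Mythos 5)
Your overall plan --- dyadic chaining built on the two preceding pointwise tail bounds, with the event split between the root increment at $x=0$ and the chain sum --- is exactly the route the paper gestures at (its entire proof is one sentence: ``From this and a chaining argument, similar to that of Lemma~3.4 in \cite{athr-jose-muel} we obtain'' the lemma). The gap is in the specific threshold schedule you write down. Using $\lambda_k = c\lambda\,2^{-\alpha k}$ with $\alpha\in(0,\tfrac12)$ at \emph{every} level does not yield the stated rate in the coarse regime $k\le k^*$. There the controlling variance $\sqrt s\wedge 2^{-k}$ is saturated at $\sqrt s$ for all such $k$, so the level-$k$ term after the per-level union bound is $2^{k}\exp\!\bigl(-C_1^{2}c^{2}4^{-\alpha k}\lambda^{2}/\sqrt s\bigr)$: as $k$ increases toward $k^*$ the prefactor $2^{k}$ grows \emph{and} the exponent shrinks, so the worst contribution sits at $k=k^*$, with exponent $\asymp\lambda^{2}4^{-\alpha k^*}/\sqrt s\asymp\lambda^{2}/(\sqrt s)^{1-2\alpha}$. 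That is strictly weaker than $\lambda^{2}/\sqrt s$ for every $\alpha>0$ and every $\sqrt s<1$; so ``each regime then produces \ldots\ a single exponential in $\lambda^{2}/\sqrt s$'' is false, and the problem starts before one even worries about the $2^{k^*}$ prefactor you flagged. Any schedule with $\lambda_{k^*}=o(\lambda)$ has the same defect.

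The remedy is to abandon Kolmogorov-style decaying thresholds through the saturated scales: place a direct union bound over the grid $\mathbb D_{k^*}$ (about $2^{k^*}\asymp 1/\sqrt s$ points, each with pointwise variance $\lesssim\sqrt s$, at threshold of order $\lambda$), and only then run your geometric chaining \emph{inside} each cell of width $2^{-k^*}\asymp\sqrt s$, where the variance genuinely scales like $2^{-k}$. This delivers a bound of the form $(C/\sqrt s)\exp(-c\lambda^{2}/\sqrt s)$. One caveat you should keep explicit: absorbing that polynomial prefactor into the exponential, as the stated form of the lemma does, requires $\lambda^{2}/\sqrt s\gtrsim\log(1/\sqrt s)$, i.e.\ $\lambda\gtrsim s^{1/4}\sqrt{\log(1/s)}$, which is precisely the order of $\E_0\sup_{x}|\N(0,t+s;x)-\N(0,t;x)|$; a Sudakov lower bound on the median shows no $C_2$ can make the prefactor-free bound hold below that scale. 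This restriction is harmless in the one place the lemma is used (Proposition~\ref{prop:n:chain} and then Proposition~\ref{prop:soft}, where $\lambda=a/16$ and $s\le C_6a^{4+\eta}$), but it is worth recording so that you do not overclaim.
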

We clearly have for $s,\, \tilde s \le  1$ 
\begin{align*}
& \Big|\sup_{x\in [0,1]} \left|\N(0,t+s;x) -\N(0,t;x)\right| -\sup_{x\in [0,1]} \left|\N(0,t+\tilde s;x) -\N(0,t; x)\right|\Big| \\
&\le \sup_{x\in [0,1]} \left|\N(0,t+s; x) -\N(0,t+\tilde s; x)\right|,
\end{align*}
and just as in the above lemma we have for $\tilde s\le  s\le 1$
\[\mP_0\left(\sup_{x\in [0,1]} \left|\N(0,t+s,x) -\N(0,t+\tilde s,x)\right|>\lambda\right) \le \exp\left(- \frac{C_2^2\lambda^2}{\sqrt{s-\tilde s}}\right). \]
Therefore a chaining argument gives us Proposition \ref{prop:n:chain}. \qed

%\begin{lem}\label{lem:n:chain} There is a $C_3>0$ such that for all $s_0\le 1$
%\[\mP_0\left(\sup_{s\le s_0}\sup_{x\in [0,1]} \left|\N(0,t+s; x) -\N(0,t; x)\right|>\lambda\right) \le \exp\left(- \frac{C_3^2\lambda^2}{\sqrt{s_0}}\right) \]
%uniformly in $t$.
%\end{lem}
Now we return to the case of soft obstacles. % By arguments from earlier  
%\begin{lem} For $t\ge L$ there is some $p>0$ so that
%\[
%P\left(\cR(\cn(0,t)) \le \frac14\right) \ge p.
%\]
%\end{lem}
We will need the following
\begin{prop}\label{prop:soft}
Fix any $\eta>0$. There are constant $0<C_6(\eta)<1$ and $C_7(\eta)>0$ such that for $t\ge L$ we have 
\begin{align*} &\mP_0\bigg(\car\left(\N(0,t)\right) \le \frac{a}{8},\; \sup_{s\le C_6a^{4+\eta}} \sup_{x\in [0,1]} \left|\N(0,t+s,x)-\N(0,t,x)\right|\le \frac{a}{16},\\
&\hspace{8cm}\text{and } \sup_{s\le C_6 a^{4+\eta}}|\X_{t+s}- \X_{t}|\le \frac{a}{16}\bigg) \ge \frac{a^2}{C_7}.\end{align*}
%where $\Lambda$ is the constant appearing in Lemma \ref{lem:r:unif}.
\end{prop}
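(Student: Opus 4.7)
The strategy is to exploit the independence of the center-of-mass Brownian motion from the ``shape'' component of the noise. Writing $\widetilde\N(0,t;x) := \N(0,t;x) - \X_t$, the argument of Lemma \ref{lem:cmrind} applies verbatim to show that the shape process $\widetilde\N$ and the Brownian motion $\X$ are independent as processes---they correspond respectively to the non-zero and zero Fourier modes of $\W$. Two consequences follow. First, $\car(\N(0,t)) = \car(\widetilde\N(0,t))$, since adding a constant in $x$ leaves the range unchanged. Second, the noise increment splits as
\[ \N(0,t+s;x) - \N(0,t;x) = (\X_{t+s}-\X_t) + (\widetilde\N(0,t+s;x) - \widetilde\N(0,t;x)), \]
so bounding each summand by $a/32$ bounds the total noise increment by $a/16$.

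Accordingly, I would introduce the events $A = \{\car(\widetilde\N(0,t)) \le a/8\}$, $B = \{\sup_{s\le C_6 a^{4+\eta}}\sup_{x\in[0,1]} |\widetilde\N(0,t+s;x) - \widetilde\N(0,t;x)| \le a/32\}$, and $C = \{\sup_{s\le C_6 a^{4+\eta}} |\X_{t+s}-\X_t| \le a/32\}$, noting that $A \cap B \cap C$ is contained in the event of interest. By the shape-CoM independence,
\[ \mP_0(A\cap B\cap C) = \mP_0(A\cap B)\,\mP_0(C) \ge (\mP_0(A) - \mP_0(B^c))\,\mP_0(C). \]
The factor $\mP_0(C)$ is handled by a standard reflection-principle bound for $d$-dimensional Brownian motion: $\mP_0(C^c) \le 2d\exp(-c/(C_6 a^{2+\eta}))$, which is at most $1/2$ once $C_6 = C_6(\eta)$ is chosen small (recall $a\le 1$, so $a^{2+\eta}\le 1$ and the $a$-dependence in the exponent is harmless). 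The factor $\mP_0(B^c)$ is handled by observing that $\widetilde\N(0,t+s;x) - \widetilde\N(0,t;x)$ differs from $\N(0,t+s;x) - \N(0,t;x)$ by an $x$-independent term and so has sup-norm bounded by twice that of the $\N$-increment; Proposition \ref{prop:n:chain} then gives $\mP_0(B^c) \le \exp(-c/(C_6^{1/2} a^{\eta/2}))$, super-polynomially small in $a^{-1}$ and hence easily absorbed into the target lower bound.

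The main obstacle, and where the quantitative work lies, is the small-ball estimate $\mP_0(A) \ge 2a^2/C_7$ on the range of the shape process. I would approach it through the Fourier decomposition $\widetilde\N(0,t;x) = \sum_{k\neq 0} A_k(t)\, e^{2\pi i k x}$, where for $t\ge L$ each $A_k(t) \in \mathbb{C}^d$ is, up to the rapidly-decaying $\N^{(2)}$ correction of Lemma \ref{lem:u:incr}, a stationary complex Ornstein--Uhlenbeck variable with variance of order $k^{-2}$, and the conjugate pairs $\{A_k, A_{-k}=\overline{A_k}\}$ are independent across distinct $|k|$. Truncating at a frequency cutoff $K=K(\eta)$, the tail $\sum_{|k|>K} A_k(t)\, e^{2\pi i k x}$ has sup-norm $\le a/16$ with probability bounded below by a positive constant---this uses summability of $\sum_{k>K} k^{-2}$ together with Gaussian concentration of the supremum and Chebyshev. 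The low-frequency truncation is a finite-dimensional centered Gaussian vector, and the event that it has sup-norm $\le a/16$ reduces to a small-ball probability in a ball of radius $\asymp a$ in $\R^{O(Kd)}$, which is bounded below by a polynomial in $a$ through a direct Gaussian density estimate at the origin; the exponent and implicit constants are absorbed into $C_7(\eta)$. Combining these three factors closes the proof.
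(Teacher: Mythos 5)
Your overall decomposition into shape process and center-of-mass is a valid way to organize the proof (indeed $\X$ and $\widetilde\N$ are independent, $\car(\N)=\car(\widetilde\N)$, and the increment splits as you say), and the reductions via Proposition \ref{prop:n:chain} and the Brownian reflection estimate are sound. The paper organizes things slightly differently, working directly with the stationary/decaying split $\N=\N^{(1)}-\N^{(2)}$, but that difference is cosmetic. The real divergence is in the small-ball estimate $\mP_0(A)\ge 2a^2/C_7$, which you correctly identify as the crux, and there your Fourier-truncation argument has a genuine gap.

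The problem is the high-frequency tail $\sum_{|k|>K}A_k(t)e^{2\pi ikx}$. With $K=K(\eta)$ fixed independently of $a$, this tail is a nondegenerate Gaussian process on $[0,1]$ whose $L^\infty$-norm has a fixed typical size of order $K^{-1/2}$ (since $\sum_{|k|>K}k^{-2}\asymp K^{-1}$), completely unrelated to $a$. Thus the event $\{\sup_x|\mathrm{tail}|\le a/16\}$ is itself a small-ball event whose probability tends to $0$ as $a\to 0$; it is certainly not ``bounded below by a positive constant'' as you assert. To make the tail genuinely small with $\Omega(1)$ probability you would need $K\gtrsim a^{-2}$, at which point the retained finite-dimensional block involves $\asymp a^{-2}d$ complex modes and the Gaussian density estimate gives a small-ball probability that is super-polynomial (indeed stretched-exponentially small) in $1/a$, not $\asymp a^2$. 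Even in the best case $K=1$ you already face a $2d$-real-dimensional ball of radius $\asymp a$, contributing $\asymp a^{2d}$ with $2d\ge 4>2$, and that exponent cannot be folded into a constant $C_7(\eta)$ that is required to be independent of $a$ --- recall that after the $J$-rescaling one must apply the proposition with $a$ replaced by $aJ^{-1/2}\to0$, so the $a$-dependence of $C_7$ is not negotiable. The paper's own route to the quadratic lower bound is different in kind: it establishes a one-sided Gaussian bound of the sharp form $\mP_0(\sup_{x,y}|\N^{(1)}(t;x,y)|\ge\lambda)\le\exp(-\lambda^2/C_4)$ with unit prefactor (via Lemma \ref{lem:G:space} and a chaining argument modelled on Proposition \ref{prop:n:chain}), and then reads off the leading order $1-e^{-a^2/C_4}\ge a^2/C_5$ directly, without any mode truncation; your mode-by-mode analysis does not reproduce that step.
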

\begin{proof} We first give a lower bound on $\mP_0\left(\car(\N(0,t)) \le \frac{a}{2}\right)$. Clearly $\car(\N(0,t)) = \sup_{x,y}|\N(t;x,y)|$, where $\N(t;x,y)$ is defined in \eqref{eq:ntxy}. We have 
\[\N(t;x,y) = \N^{(1)}(t; x,y)-\N^{(2)}(t;x,y).\] 
Lemma \ref{lem:u:incr} gives
\[ \mP_0\left(\sup_{x,y\in [0,1]}|\N^{(2)}(t;x,y)| \ge \frac{a}{16}\right) \le 2 \exp\left(- \frac{e^ta^2}{256 C_1}\right).\]
 We next obtain an upper bound on the tail probabilities of $\sup_{x,y\in [0,1]}|\N^{(1)}(t;x,y)|$. Using ideas analogous to Proposition \ref{prop:n:chain}, but now we use Lemma \ref{lem:G:space} instead of the bounds on \eqref{eq:z:1} and \eqref{eq:z:2}, we obtain 
\[ \mP_0 \left(\sup_{x,y\in [0,1]}|\N^{(1)}(t;x,y)| \ge \frac{a}{16}\right)  \le \exp\left( -\frac{a^2}{256 C_4}\right)\]
for some $C_4>0$. Therefore
\begin{equation}\label{eq:r:ot}\begin{split}
\mP_0\left(\car\left(\N(0,t)\right) \le \frac{a}{2}\right) & = 1- \mP_0\left(\car\left(\N(0,t)\right) > \frac{a}{4}\right) \\
&\ge 1- \mP_0\left(\sup_{x,y\in [0,1]}|\N^{(1)}(t;x,y)| \ge \frac{a}{16}\right)  \\
&\qquad -\mP_0\left(\sup_{x,y\in [0,1]}|\N^{(2)}(t;x,y)| \ge \frac{a}{16}\right) \\
&\ge \frac{a^2}{C_5}.
\end{split}\end{equation}
for some $C_5>0$.
By Proposition \ref{prop:n:chain} and standard results on Brownian motion the quantities
\[ \mP_0\left(\sup_{s\le C_6 a^{4+\eta}} \sup_{x\in[0,1]}\left|\N(0,t+s; x)-\N(0,t; x)\right| \le \frac{a}{16} \right)\]
and 
\[ \mP_0\left(\sup_{s\le C_6 a^{4+\eta}}|\X_{t+s}- \X_{t}|\le \frac{a}{16}\right)\]
are both at least $1- \exp\left(-\frac{C}{\sqrt{C_6}a^{\frac{\eta}{2}}}\right)$, {\it uniformly} in $t$. The proposition is proved by combining the above with \eqref{eq:r:ot}.
\end{proof}

\section{The proof of upper bound in Theorems \ref{thm1} and \ref{thm2}} \label{sec:hard}

As explained earlier (in Remark \ref{rem:scaling}) due to the scaling relations, we will first obtain an upper bound for $S_T^{H,1,\nu}$.  We will consider the hard obstacle case first and then modify its proof suitably to handle the soft obstacle case.

\begin{proof}[Proof of Upper bound in Theorem \ref{thm1}] Recall from \eqref{eq:pf:hard} that \[S_T^{\HH,1,\nu} = \E_0\exp\left( -\nu \left|\mathscr{S}^1_{T}(a)\right|\right) \]
where $\mathscr{S}^1_{T}(a)$ is the sausage of radius $a$ around $\uv$, that is 
\bes \mathscr{S}^1_{T}(a) =  \mathop{\bigcup}_{\substack{0 \leq s\leq T,\\ 0 \leq y \leq 1}} \left\{ \uv(s,y) + B(\mathbf{0},a) \right\}.\ees
The upper bound on $S_T^{H,1,\nu}$ essentially involves finding a lower bound on the volume of the sausage. 

Recall from \eqref{eq:nt} that $\#(T) =\left|\left\{i\ge 1: \tau_i \le T\right\}\right|,$ counts the number of $\tau_i$'s before time $T$, and for $\Lambda >1$ from Lemma \ref{lem:nt:tail} that there are positive constants $ A_1(d, \Lambda),\, B_1(d, \Lambda)$ such for all $T>0$ 
\be \label{eq:n:t} \mP_0\left(\#(T)\le A_1 T^{\frac{d}{d+2}}\right) \le \exp\left(-B_1T^{\frac{d}{d+2}}\right).\ee
Therefore 
\begin{align*}
& \E_0\exp\left(-\nu \left|\mathscr{S}^1_{T}(a)\right|\right) \\
&\le \exp\left(-B_1 T^{\frac{d}{d+2}}\right) + \E_0\left[\exp\left(-\nu  \left|\mathscr{S}^1_{T}(a)\right|\right)\cdot \mathbf{1} \left\{\#(T) > A_1 T^{\frac{d}{d+2}}\right\}\right].
\end{align*}
Now let 
\[ \#_1(T) :=\left|\left\{i\le A_1 T^{\frac{d}{d+2}}:\, \tau_{i+1}-\tau_i\ge \Lambda^2\right\}\right|.\]
Clearly $\tau_{i+1}-\tau_i$ is more than the time it takes for the Brownian motion $\X_t$ to leave a ball of radius $4\Lambda$ centered at $\X_{\tau_i}$. Therefore the sequence $\tau_{i+1}-\tau_i$ stochastically dominates an i.i.d. sequence $\mathcal T_i$, where $\mathcal T_i$ is distributed as the time it takes for a Brownian motion starting at $\mathbf 0$ to leave a ball of radius $4\Lambda$. Moreover 
\[ \mP\left( \mathcal T_i \ge \Lambda^2\right) =p>0,\]
where $p$ is independent of any of the parameters. Therefore by standard large deviation theory, there are positive constants $A_2(p,d, \Lambda),
B_2(p,d,\Lambda)$ such that 
\be \label{eq:n1:t} \mP_0\left(\#_1(T) < A_2 T^{\frac{d}{d+2}}\right) \le \exp\left(-B_2 T^{\frac{d}{d+2}}\right).\ee
Consider the event
\[ \mathscr A_1:= \left\{\#(T)> A_1T^{\frac{d}{d+2}},\quad \#_1(T)> A_2 T^{\frac{d}{d+2}}\right\}. \]
%Recall that $\mathscr{S}^{(t)}(\epsilon)$ is the sausage of radius $\epsilon$ around $u(t,\cdot)$. We thus have
%\[ E\left[\exp\left(-\epsilon^{-1} \left|\mathcal S_T(\epsilon)\right|\right)\right] \le \exp\left(- B_3 T^{\frac{d}{d+2}} \epsilon^{\theta}\right) +\E\left[\exp\left(-\epsilon^{-1}\left|{\bigcup}_{i=0}^{A_2T^{\frac{d}{d+2}}\epsilon^{\theta}}\mathscr{S}^{(\tau_i)}(\epsilon)\right|\right)\cdot \mathbf 1\{\mathcal A_1\}\right]. \]
Equations \eqref{eq:n:t} and \eqref{eq:n1:t} imply that there is a positive $B_3(d, \Lambda, p)$ such that 
\be \label{eq:a1}
\mP_0\left(\mathscr{A}_1^c\right) \le \exp\left(-B_3 T^{\frac{d}{d+2}}\right).
\ee
To get a good lower bound on the volume of the sausage, we need a good control on the number of $T_i$'s up to time $T$. So let 
\[ \#_2(T):= \left|\left\{T_i: T_i \le T\right\}\right|.\]
Our next task is to show that on the event $\mathscr A_1$ we must have that $\#_2(T)$ is sufficiently large. 

On the event 
\[ \mathscr{A}_2= \left\{\sum_{i=1}^{A_4 \frac{T^{\frac{d}{d+2}}}{L}} (S_i-T_{i-1}) \le  \widetilde{C} A_4T^{\frac{d}{d+2}}\right\}, \]
the number of $\tau_i$ with $\tau_{i+1}-\tau_i \ge \Lambda^2$ in the union of all the intervals $[T_{i-1}, S_i],\, i=1,2\cdots, A_4\frac{T^{\frac{d}{d+2}}}{L}$ is less than $\frac{\widetilde{C}A_4 T^{\frac{d}{d+2}}}{\Lambda^2}$. On the event $\mathscr A_1$ we have $\#_1(T) > A_2T^{\frac{d}{d+2}}$. Therefore with the choice of $A_4$ such that 
\be \label{eq:a4} \frac{\widetilde{C}A_4}{\Lambda^2} = \frac{A_2}{4}, \ee
the intersection of $\mathscr A_1$ and $\mathscr{A}_2$ contains at least $\frac{ A_2}{2}T^{\frac{d}{d+2}}$ many $\tau_i$'s before time $T$ outside of the union of intervals $[T_{i-1}, S_i],\, i=1,2\cdots, A_4\frac{T^{\frac{d}{d+2}}}{L}$. Note carefully that any $T_j$ is the smallest $\tau_i$ immediately following $S_j$, and therefore this guarantees that there are at least $A_4\frac{T^{\frac{d}{d+2}}}{L}$ many $T_j$'s up to time $T$. Then using Lemma \ref{lem:eAla} and \eqref{eq:a1} we have that 
\be\label{eq:n2:t}
\mP_0\left(\#_2(T) <A_4\frac{T^{\frac{d}{d+2}}}{L} \right) \le \mP_0(\mathscr{A}_1^c)  +\mP_0(\mathscr{A}_2^c)  \le \exp\left(-B_4 T^{\frac{d}{d+2}}\right),\ee
for some $B_4(d,\Lambda, p)>0$.

Therefore on the event $\mathscr A_3:=\mathscr A_1 \cap \mathscr A_2$ we have that 
$\#_2(T) \ge A_4\frac{T^{\frac{d}{d+2}}}{L}.$
We now count the $T_i$'s before time $T$ with large sausage volumes at $T_i's$, as in  Lemma \ref{lem:ti}. Namely,
\bes %\label{eq:n3:t0}
\#_3(T) := \left|\left\{ i \le A_4\frac{T^{\frac{d}{d+2}}}{L}:\,\car\left(\N(T_{i-1}, T_i)\right) \le \Lambda,\; \left |\cs\Big(\frac{a}{2};\,\N(T_{i-1}, T_i))\Big) \right| \ge {\mathcal C}_\gamma a^{d-2+\gamma} \right\}\right|.
\ees

It now follows from \eqref{eq:ti}, for $\Lambda >1$ as in Lemma \ref{lem:r:unif}, that for any $A_5>0$
\begin{align*}
\mP_0\left(\#_3(T)\le  A_5\frac{T^{\frac{d}{d+2}}}{L}\right) & = \mP_0 \left(\exp\left(-\#_3(T)\right) \ge \exp\left(-  A_5\frac{T^{\frac{d}{d+2}}}{L}\right)\right) \\
%&\le E\exp(\lambda N_3) \exp\left(-\lambda A_7T^{\frac{d}{d+2}}\epsilon^{\theta} \right) \\
&\le \left(\frac12+e^{-1}\right)^{A_4\frac{T^{\frac{d}{d+2}}}{L}}\exp\left(A_5\frac{T^{\frac{d}{d+2}}}{L} \right)\\
&\le \exp\left(A_4\frac{T^{\frac{d}{d+2}}}{L}\log \left(\frac12+e^{-1}\right) +A_5 \frac{T^{\frac{d}{d+2}}}{L}\right).
\end{align*}
%Choose $\lambda<0$ and {\color{red} $A_7$ small enough} so that we have 
We now choose
\[ A_5= -\frac{A_4}{2}\log\left(\frac12+e^{-1}\right),\]
to obtain 
\be \label{eq:n3:t} \mP_0\left(\#_3(T)\le  A_5\frac{T^{\frac{d}{d+2}}}{L}\right)  \le \exp\left(- B_5\frac{ T^{\frac{d}{d+2}}}{L} \right),\ee
for a positive constant $B_5(d, \Lambda, p)$.

 Let $n_1<n_2<\cdots $ be the indices $j\le A_4\frac{T^{\frac{d}{d+2}}}{L}$ such that $T_j\le T$ and
\begin{equation} \label{eq:volbd}
\,\car\left(\N(T_{i-1}, T_i)\right) \le \Lambda,\; \left |\cs\Big(\frac{a}{2};\,\N(T_{i-1}, T_i))\Big) \right| \ge {\mathcal C}_\gamma a^{d-2+\gamma}.
\end{equation}
Thanks to \eqref{eq:n:t}, \eqref{eq:n1:t}, \eqref{eq:n2:t} and \eqref{eq:n3:t}, outside of a set of probability $\exp\left(-B_6\frac{T^{\frac{d}{d+2}}}{L}\right)$, where $B_6$ is a positive constant depending only on $d, \Lambda, p$, there are at least $A_5\frac{T^{\frac{d}{d+2}}}{L}$ many such $n_i$'s.  Further, from \eqref{eq:r:u-n} we have  that the {\it fixed-time} sausages $\mathscr{S}^1(a; T_{n_j})$ are disjoint.  Therefore, using \eqref{eq:vol:u-n} we have
\begin{align*}
S_T^{\HH,1,\nu} & = \E_0\exp\left( -\nu \left|\mathscr{S}^1_{T}(a)\right|\right)\\
&\le \exp\left(-B_6\frac{T^{\frac{d}{d+2}}}{L}\right) + \E_0\left[\exp\left(-\nu\left|{\bigcup}_{j=0}^{A_5\frac{T^{\frac{d}{d+2}}}{L}}\mathscr{S}^1(a; T_{n_j})\right|\right)\right] \\
&\le \exp\left(-B_6\frac{T^{\frac{d}{d+2}}}{L}\right) + \E_0\left[\exp\left(-\nu\left|{\bigcup}_{j=0}^{A_5\frac{T^{\frac{d}{d+2}}}{L}}\mathscr{S}(\frac{a}{2}; {\mathbf N}(T_{n_{j-1}},T_{n_j})\right|\right)\right] 
\end{align*}
Now applying \eqref{eq:volbd},
\begin{align*}
S_T^{\HH,1,\nu} &\le \exp\left(-B_6\frac{T^{\frac{d}{d+2}}}{L}\right) + \exp\left(-\nu A_5 C_{\gamma}a^{d-2+\gamma}\frac{T^{\frac{d}{d+2}}}{L} \right). 
\end{align*}

Finally we apply the scaling \eqref{eq:scaling} to get an upper bound for $S_T^{H, J, \nu}$. We thus obtain %for $T\ge T_0J^2$,
\begin{align*}
S_T^{\HH, J, \nu} &\le \exp\left(-\frac{B_6(T/J^2)^{\frac{d}{d+2}}}{E+3|\log(a/J^{\frac12})|}\right) + \exp\left(-\nu A_5 C_{\gamma}a^{d-2+\gamma}\frac{(T/J^2)^{\frac{d}{d+2}}J^{1-\frac{\gamma}{2}}}{E+3|\log(a/J^{\frac12})|} \right).
\end{align*}
It is clear that the first term is the leading term. 
%Therefore 
%\[ E\left[\exp\left(-\epsilon^{-1} \left|\mathcal S_T(\epsilon)\right|\right)\right] \le \exp\left(- B_6 T^{\frac{d}{d+2}} \epsilon^{\theta}\right) +E\left[\exp\left(-\epsilon^{-1}\left|{\bigcup}_{i=0}^{A_3T^{\frac{d}{d+2}}\epsilon^{\theta}}\mathscr{S}^{(T_i)}(\epsilon)\right|\right)\right]. \]
%Let $n_1<n_2<\cdots$ be the indices $j$ such that 
%\[ \cR\left(\cn\left(T_{j-1}, T_j\right)\right) \le \alpha,\quad \left|\cs\Big(\cn\left(T_{j-1}, T_j\right);\epsilon\Big) \right| \ge C_\gamma \epsilon^{d-2+\gamma}.\]
%Thanks to \eqref{eq:r:u-n} and \eqref{eq:vol:u-n} we have 
%\[\cR\left(u\left( T_j\right)\right) \le \frac{3\alpha}{2},\quad  \left|\cs\Big(u\left( T_j\right);\epsilon\Big) \right| \ge \frac{C_\gamma}{2} \epsilon^{d-2+\gamma}. {\color{red} \text{ (need $\gamma<1$) }} \]
%Note that $\sup_{x\in [0,1]}|u(t,x) -X_t|\le \cR\big(u(t,\cdot)\big)$, so that the sausages $\mathscr S^{(T_{n_j})}(\epsilon)$ are disjoint. We then have 
%\begin{align*}
%E\left[\exp\left(-\epsilon^{-1} \left|\mathcal S_T(\epsilon)\right|\right)\right]  &\le \exp\left(- B_8 T^{\frac{d}{d+2}} \epsilon^{\theta}\right) +E\left[\exp\left(-\epsilon^{-1}\left|{\bigcup}_{j=0}^{A_7T^{\frac{d}{d+2}}\epsilon^{\theta}}\mathscr{S}^{(T_{n_j})}(\epsilon)\right|\right)\right] \\
%&\le \exp\left(- B_8 T^{\frac{d}{d+2}} \epsilon^{\theta}\right) + \exp\left(-\frac{C_\gamma A_7}{2}T^{\frac{d}{d+2}}\epsilon^{d-3+\theta+\gamma}\right) \\
%&\le \exp\left(-\tilde{C}_\gamma T^{\frac{d}{d+2}} \epsilon^{d-3+\theta+\gamma}\right)
%\end{align*}
\end{proof}

We now turn to the case of soft obstacles. As before we first find an upper bound for $S_T^{H,1,\nu}$. We will explain how the argument differs from the case of hard obstacles.

\begin{proof}[The proof of upper bound in Theorem \ref{thm2} ]
 We will now look for a subsequence $T_{n_j}$ of the $T_i$'s such that the entire string stays in a small ball of radius $\frac{3a}{8}$ during the time interval $[T_{n_j}, T_{n_j}+a^{4+\epsilon}]$, and there exists a Poisson point within distance $\frac{a}{8}$ of the center of mass (See Definition \ref{eq:n3:t:soft}). Assumption \ref{ass} then guarantees there is a contribution of at least $\mathscr {C} a^{4+\epsilon}$ to the integral in $S_T$ during this time period. 

We then follow the argument of Proof of upper bound in Theorem \ref{thm1}. The only difference now is in the definition of $\#_3(T)$, which in the present case becomes
\be\label{eq:n3:t:soft}\begin{split}
&\#_3(T)=\bigg\vert \bigg\lbrace i \le A_4 \frac{T^{\frac{d}{d+2}}}{L} :\car(\N(T_{j-1}, T_j)) \le \frac{a}{8},\\
 &\qquad \qquad \sup_{s\le C_6a^{4+\eta}} \sup_{x\in [0,1]} \left|\N(T_{j-1},T_{j}+s,x)-\N(T_{j-1}, T_j,x)\right|\le \frac{a}{16}, \\
&  \qquad \qquad\sup_{s\le C_6a^{4+\eta}}|\X_{T_j+s}- \X_{T_j}|\le \frac{a}{16},\; \\
&\qquad \qquad\text{and there is a Poisson point within distance $\frac{a}{8}$ of $\X_{T_j}$ }  \bigg\rbrace\bigg\vert
\end{split}
\ee
Therefore $\#_3(T)$ is a sum of Bernoulli random variables with probability of success $\tilde p$ satisfying 
\[ \tilde p\ge \frac{a^2}{C_7}  \left(1- \exp\left(-\nu \frac{a^d}{8^d}\right)\right) \ge C_8\nu a^{d+2}\]
for some constant $0<C_8<1$.

 Let $Z_1, Z_2,\cdots $ be i.i.d. Bernoulli random variables with success probability $p_*= C_8\nu a^{d+2}$. By standard large deviation theory
\be \label{eq:n3:tail:soft} \begin{split}
\mP_0\left(\#_3(T) \le A_4 \frac{T^{\frac{d}{d+2}}}{L} \cdot \frac{p_*}{2} \right) & \le \mP_0\left(\sum_{i=1}^{\frac{A_4 T^{\frac{d}{d+2}}}{L}} Z_i  \le A_4 \frac{T^{\frac{d}{d+2}}}{L} \cdot \frac{p_*}{2} \right) \\
&\le \exp\left(-B_7 \frac{T^{\frac{d}{d+2}}}{L}\cdot \nu a^{d+2}\right),
\end{split}\ee
for some constant $B_7(d,\Lambda,p)>0$.

Let $n_1<n_2<\cdots $ be the indices $j$ such that the event in the right hand side of \eqref{eq:n3:t:soft} occurs. 
%\begin{align*}&\car(\N(T_{j-1}, T_j)) \le \frac{a}{8},\quad \sup_{s\le C_5a^{4+\eta}} \sup_{x\in [0,1]} \left|\N(T_{j-1},T_{j}+s,x)-\N(T_{j-1}, T_j,x)\right|\le \frac{a}{16}, \\
%&\hspace{8cm}\text{and } \sup_{s\le C_5a^{4+\eta}}|\X_{T_j+s}- \X_{T_j}|\le \frac{a}{16}.
%\end{align*}
Note that at these times $T_{n_k}$ it follows from the proof of \eqref{eq:r:u-n}
\begin{align*}
 \sup_{s\le C_6a^{4+\eta}}  \car(\uv(T_{n_k}+s)) &\le \sup_{s\le C_6a^{4+\eta}} \car(\N(T_{n_k-1}, T_{n_k}+s))+2\delta \\
 & \le \car(\N(T_{n_k-1}, T_{n_k})) +\frac{a}{8} +2\delta \\
 & \le  \frac{a}{4} +2\delta 
\end{align*}
Moreover we have that the center of mass satisfies $\sup_{s\le C_6 a^{4+\eta}}\left|\X_{T_{n_k}+s} - \X_{T_{n_k}}\right|\le \frac{a}{16}$. Therefore 
\begin{align*} \sup_{s\le C_6a^{4+\eta}} &\left| \uv(T_{n_k}+s, x) - \X_{T_{n_k}} \right|   \\
&\le \sup_{s\le C_6 a^{4+\eta}} \left|\uv(T_{n_k}+s, x) - \X_{T_{n_k}+s}\right| +\sup_{s\le C_6 a^{4+\eta}} \left|\X_{T_{n_k}+s}-\X_{T_{n_k}}\right|   \\
&\le \sup_{s\le C_6 a^{4+\eta}}  \car(\uv(T_{n_k}+s)) +\sup_{s\le C_6 a^{4+\eta}}\left|\X_{T_{n_k}+s}-\X_{T_{n_k}}\right|  \\
&\le \frac{5a}{16} +2\delta \\
&\le \frac{3a}{8}.
\end{align*}
Thus the entire string lies within a ball of radius $\frac{3a}{8}$ centered at $\X_{T_{n_k}}$ for the duration $[T_{n_k}, T_{n_k}+C_6 a^{4+\eta}]$. Since there is a Poisson point within distance $\frac{a}{8}$ of $\X_{T_{n_k}}$  the string will be entirely contained within distance $\frac{a}{2}$ of the Poisson point during the time interval $[T_{n_k}, T_{n_k}+C_6a^{4+\eta}]$. Therefore using the bounds in the proof of the upper bound in Theorem \ref{thm1} along with \eqref{eq:n3:tail:soft} we obtain for $T>0$
\begin{align*}
S_T^{\HH, 1,\nu}&=\E\left[\exp\left(- \int_0^T\int_{0}^1 \V(\uv(s,x),\text{\Large$\eta$}) ds dx\right)\right] \\
&\le \exp\left(-B_7 \frac{T^{\frac{d}{d+2}}}{L}\cdot \nu a^{d+2}\right) +\exp\left(-B_8 \frac{T^{\frac{d}{d+2}}}{L} \right)\\
&\quad  + \E\left[\exp\left(- \sum_{j=1}^{\frac{A_5}{L} T^{\frac{d}{d+2}}\nu a^{d+2}}\int_{T_{n_j}}^{T_{n_j}+C_6 a^{4+\eta}}\int_{0}^1 \V(\uv(s,x),\text{\Large$\eta$}) ds dx\right)\cdot \right],
\end{align*}
where $A_5= \frac{A_4 C_8}{2}$. Now we use Assumption \ref{ass} to obtain an upper bound
\begin{align*}
S_T^{\HH, 1,\nu}&\le  \exp\left(-B_7 \frac{T^{\frac{d}{d+2}}}{L}\cdot \nu a^{d+2}\right) +\exp\left(-B_8 \frac{T^{\frac{d}{d+2}}}{L} \right) \\
&\qquad + \exp\left(- \mathscr CA_5 C_6\nu a^{d+6+\eta} \frac{T^{\frac{d}{d+2}}}{L}\right) 
\end{align*}
%for some constants $C,\tilde C$.

Finally we use \eqref{eq:scaling} to get an upper bound for $S_T^{\HH, J,\nu}$. We obtain for $T>0$
\begin{align*}
S_T^{\HH, J,\nu} & \le \exp\left(- \frac{B_7 \nu a^{d+2}(T/J^2)^{\frac{d}{d+2}}}{J\left(E+ 3 |\log (a/J^{\frac12})|\right)}\right) %+\exp\left(-B_8 \frac{T^{\frac{d}{d+2}}}{L} \right) \\
+\exp\left(- \frac{\mathscr CA_5 C_6\nu a^{d+6+\eta} (T/J^2)^{\frac{d}{d+2}}}
{J^{3+\frac{\eta}{2}}\left(E+ 3 |\log (a/J^{\frac12})|\right)}\right). 
%\exp\left(-\mathscr C \frac{A_5}{L} (T/J^2)^{\frac{d}{d+2}}\nu a^{d+6+\eta}\cdot C_5\frac{1}{J^{3+\frac{\eta}{2}}}\right)
\end{align*}
It is clear that the second term dominates the first term for large $J$. 

\end{proof}

\bibliography{polymer}
\bibliographystyle{amsalpha}

\end{document}